
\documentclass[a4paper]{amsart}
\usepackage{amssymb}
\usepackage{epsfig}
\usepackage{verbatim}
\usepackage{graphicx}

\setlength{\unitlength}{0.7pt}


\let\phi\varphi

\def\Z{{\mathbb Z}}
\def\Q{{\mathbb Q}}
\def\R{{\mathbb R}}
\def\C{{\mathbb C}}
\def\E{{\mathbb E}}
\def\H{{\mathbb H}}

\def\F{{F}} 
\def\G{{G}} 
\def\comm{\mbox{\rm Comm}}
\def\vol{\mbox{\rm Vol}}
\def\SL{\mbox{SL}}
\def\minusmat{\left(\begin{array}{rr}-1&0\\0&-1\end{array}\right)}
\def\pmmat{\left(\begin{array}{rr}1&0\\0&-1\end{array}\right)}
\def\rflmat{\left(\begin{array}{rr}0&1\\1&0\end{array}\right)}
\def\trnmat{\left(\begin{array}{rr}0&1\\-1&0\end{array}\right)}
\def\n{{\mathbf n}}
\def\x{{\mathbf x}}
\def\P{{\mathcal P}}


\newtheorem{theorem}{Theorem}[section]
\newtheorem{lemma}[theorem]{Lemma}

\newtheorem{proposition}[theorem]{Proposition}

\newtheorem{remark}[theorem]{Remark}

\theoremstyle{definition}
\theoremstyle{remark}

\title{Commensurators of cusped hyperbolic manifolds}

\author{Oliver Goodman}

\author{Damian Heard}

\author{Craig Hodgson}

\address{Department of Mathematics and Statistics,
University of Melbourne, Parkville, Victoria 3010, Australia}
\email{oag@optusnet.com.au,~~damian.heard@gmail.com,~~cdh@ms.unimelb.edu.au}

\thanks{This work was partially supported by grants from the Australian Research Council}

\begin{document}

\begin{abstract}
This paper describes a general algorithm for finding the commensurator of a non-arithmetic hyperbolic manifold with cusps, and for deciding when two such manifolds are commensurable.  The method is based on some elementary observations
regarding horosphere packings and canonical cell decompositions.
For example, we use this to find the commensurators of all non-arithmetic hyperbolic once-punctured torus bundles over the circle. 

For hyperbolic 3-manifolds, the algorithm has been implemented using Goodman's computer program Snap.  We use this to determine the commensurability classes of all cusped hyperbolic $3$-manifolds triangulated using at most 7 ideal tetrahedra, and for the complements of hyperbolic knots and links with up to 12 crossings.
\end{abstract}

\maketitle

\section{Introduction}

Two manifolds or orbifolds $M$ and $M'$ are {\em commensurable} if they
admit a common finite sheeted covering. For hyperbolic
$n$-orbifolds, we can suppose that $M= \H^n/\Gamma$ and
$M'= \H^n/\Gamma'$, with $\Gamma$ and $\Gamma'$ discrete subgroups of
$\mbox{\rm Isom}(\H^n)$.
In this paper, we assume that $M$ and $M'$ are of finite volume
and of dimension at least $3$. Then, by Mostow-Prasad Rigidity,
commensurability means that we can conjugate
$\Gamma$ by an isometry $g$ such that $g\Gamma g^{-1}$ and $\Gamma'$
intersect in a subgroup of finite index in both groups.

Given that the classification of finite
volume hyperbolic manifolds up to homeomorphism appears to be
hard, it seems sensible to attempt to subdivide the problem and start
with a classification up to commensurability. Looked at in this way,
we see a remarkable dichotomy between the {\em arithmetic} and
{\em non-arithmetic} cases. (See \cite{MacReid} for the definition of arithmetic
hyperbolic manifolds.)

Define the {\em commensurator} of $\Gamma$ to be
the group
$$\mbox{\rm Comm}(\Gamma) = \{g\in \mbox{\rm Isom}(\H^n) \mid
[\Gamma:\Gamma\cap g\Gamma g^{-1}]<\infty\}.$$ 
Then $\Gamma$ and $\Gamma'$ are commensurable if and only if
$\comm(\Gamma)$ and $\comm(\Gamma')$ are conjugate.
Geometrically, an element of the normalizer of $\Gamma$ in ${\rm Isom}(\H^n)$
represents a {\em symmetry} (i.e. isometry) of $M=\H^n/\Gamma$. Similarly,
an element of the commensurator represents an isometry
between finite sheeted covers of $M$; this gives a 
{\em hidden symmetry} of $M$ if it is not the lift of an isometry of $M$ (see \cite{NR}).

It follows from deep work of Margulis~\cite{Marg} (see also \cite{Zi}), that in
dimension $\ge 3$,  the commensurator
$\comm(\Gamma)$ is discrete if and only if
$\Gamma$ is not arithmetic. This means that the commensurability class
of a non-arithmetic, cofinite volume, discrete group $\Gamma$ is
particularly simple, consisting only of conjugates of the finite
index subgroups of $\comm(\Gamma)$. In terms of orbifolds, it means
that $M$ and $M'$ are commensurable if and only if they cover a common
quotient orbifold.

On the other hand, commensurability classes of
arithmetic groups are ``big'': we may well have commensurable $\Gamma$
and $\Gamma'$ such that the group generated by $g\Gamma g^{-1}$ 
and $\Gamma'$ is not discrete for any $g$.

A hyperbolic $n$-orbifold is {\em cusped} if it is non-compact of finite volume. 
This paper describes a practical algorithm for determining when two
cusped hyperbolic $n$-manifolds cover a common quotient, and for
finding a smallest quotient. For non-arithmetic finite volume {\em cusped}
hyperbolic $n$-manifolds of dimension $n \ge 3$, this solves the commensurability problem. 

Section 2 begins with some elementary observations about horoball packings 
and canonical cell decompositions of a cusped hyperbolic manifold. This leads to a characterization of the commensurator of a non-arithmetic cusped hyperbolic manifold $M$ as the maximal symmetry group of the tilings of $\H^n$ obtained by lifting canonical cell decompositions of $M$. In Section 3, we use this to determine the commensurators of non-arithmetic  hyperbolic once-punctured torus bundles over the circle. 

 Section 4 gives an algorithm for finding the isometry group of a tiling of $\H^n$ arising from a cell decomposition of a hyperbolic manifold, and Sections 5 and 6 describe methods for finding all possible canonical cell decompositions for a cusped hyperbolic manifold. Section 7 contains some observations on commensurability of
  cusps in hyperbolic 3-manifolds which can simplify the search for all canonical cell decompositions. 

In 3-dimensions, 
each orientable hyperbolic orbifold has the form $M=\H^3/\Gamma$, where $\Gamma$ is a discrete subgroup of $\mbox{PSL}(2,\C) = \mbox{\rm Isom}^+(\H^3)$.
The {\em invariant trace field} $k(\Gamma)\subset \C$ is the field generated
by the traces of the elements of $\Gamma^{(2)} = \{\gamma^2\mid
\gamma\in\Gamma\}$ lifted to
$\mbox{SL}(2,\C)$. This is a number field if $M$ has finite volume (see \cite{NR}, \cite{R}, \cite{Ma}). The {\em invariant quaternion
algebra} is the $k(\Gamma)$ subalgebra of $M_2(\C)$ generated by
$\Gamma^{(2)}$. These are useful and
computable commensurability invariants (see \cite{CGHN}, \cite{MacReid}).

For the {\em arithmetic} subgroups of $\mbox{\rm Isom}(\H^3)$, 
the invariant quaternion algebra is a complete commensurability invariant.
In fact for {\em cusped} arithmetic hyperbolic 3-orbifolds, 
the invariant trace field is an imaginary quadratic field and the quaternion
algebra is just the algebra of all $2 \times 2$ matrices
with entries in the invariant trace field (see \cite[Theorem 3.3.8]{MacReid}); so the invariant trace field
is a complete commensurability invariant.
However most cusped hyperbolic $3$-manifolds are non-arithmetic (cf. \cite{Bor}) 
so other methods are needed to determine commensurability.

Damian Heard and Oliver Goodman have implemented  
the algorithms described in this paper for non-arithmetic hyperbolic 3-manifolds;
these are incorporated in the computer program Snap \cite{Snap}. 
Using this we have determined the commensurability classes for all manifolds
occurring in the Callahan-Hildebrand-Weeks census (\cite{HiW}, \cite{CHW}) of 
cusped hyperbolic manifolds with up to $7$ tetrahedra, and for complements of hyperbolic knots and links
up to 12 crossings, supplied by Morwen Thistlethwaite (see \cite{knotscape}).
These results are discussed in Section 8, while Section 9 outlines the Dowker-Thistlethwaite notation used to describe links.

This work has uncovered interesting new examples of commensurable knot and link complements (see Examples \ref{5_links} and \ref{comm_knots}), and a new example of a knot with shape field properly contained in the invariant trace field (see Example \ref{shape_not_trace}). The results have also been used by Button \cite{Bu} to study 
fibred and virtually fibred cusped hyperbolic 3-manifolds.

For $1$-cusped manifolds we note that ``cusp density'' (see
Section~\ref{horopack}) is a very good
invariant. We have found only a few examples of incommensurable
$1$-cusped manifolds which are not distinguished by cusp density (see
Example~\ref{hball_diff}). 

There is also a ``dumb'' algorithm, based on volume bounds for
hyperbolic orbifolds, which works for any (possibly closed) 
non-arithmetic hyperbolic 3-orbifold, but
appears to be quite impractical. If $M$ and $M'$ cover $Q$ with
$\vol(Q) > C$ then the degrees $d,d'$ of the coverings are
bounded by $D = \lfloor\vol(M)/C\rfloor$ and $D' =
\lfloor\vol(M')/C\rfloor$, respectively. Then if $M$ and
$M'$ are commensurable, they admit a common covering $N$ of degree at
most $D'$ over $M$ and at most $D$ over $M'$. The best current
estimate for $C$ for orientable non-arithmetic 3-orbifolds is $0.041\ldots$
from recent work of Marshall-Martin \cite{MM}. Since $\vol(M)\approx 2$
is typical, we would have to find all
coverings of $M'$ of degree $d'\leq 50$. This means finding all
conjugacy classes of transitive representations of $\pi_1(M')$ into
$S_{50}$, a group with around $10^{64}$ elements!

Acknowledgements: We thank Ian Agol for pointing out a simplification 
to our method of determining the commensurability of Euclidean tori, 
Gaven Martin for information on current volume bounds,
and Walter Neumann for several interesting discussions on this work. 
We also thank Alan Reid, Genevieve Walsh, 
and the referee for their helpful comments on the paper.

\section{The Commensurability Criterion}\label{horopack}

We use the following terminology throughout this paper.
A set of disjoint horoballs in $\H^n$ is called a {\em horoball packing}, and 
a  {\em cusp neighbourhood} in a hyperbolic $n$-orbifold is one which lifts to such a horoball packing.

\begin{lemma} \label{hdisc}
  The symmetry group of a horoball packing in $\H^n$ is discrete
  whenever the totally geodesic subspace spanned by their ideal points
  has dimension at least $n-1$. 
\end{lemma}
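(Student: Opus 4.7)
The plan is to show that any sequence $g_k \in \mathrm{Isom}(\H^n)$ of packing symmetries converging to the identity is eventually trivial. The argument proceeds in three stages: localizing the action to individual horoballs, extracting enough fixed ideal points, and invoking a boundary rigidity statement.

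For a single horoball $B$ of the packing, pick an interior point $q \in B$ and set $\delta = d(q, \H^n \setminus B^{\circ}) > 0$. By disjointness, every other horoball lies in $\H^n \setminus B^{\circ}$, hence at distance at least $\delta$ from $q$. For $k$ large we have $d(g_k(q), q) < \delta$, so $g_k(q)$ lies in $B$; combined with $g_k(q) \in g_k(B)$ and disjointness this forces $g_k(B) = B$, so $g_k$ fixes the ideal point of $B$.

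Since the ideal points of the packing totally-geodesically span $V$, I extract finitely many such ideal points $p_0, \ldots, p_m$ whose span is already $V$. Applying the preceding observation to each $p_i$ and taking the maximum of the resulting thresholds, $g_k$ simultaneously fixes every $p_i$ once $k$ is sufficiently large.

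It remains to argue that an isometry fixing all $p_i$ and close to $\mathrm{id}$ must equal $\mathrm{id}$. If $\dim V = n-1$, the $p_i$ span the $(n-2)$-sphere $\partial V \subset \partial \H^n$, so $g_k$ preserves $V$ and restricts to an isometry of $V \cong \H^{n-1}$ fixing a spanning set of ideal points; a rigidity argument then forces $g_k|_V$ to be the identity on $V$, so $g_k$ is either $\mathrm{id}$ or the reflection in $V$. Since the reflection is orientation-reversing and hence bounded away from $\mathrm{id}$, $g_k \to \mathrm{id}$ rules this case out for $k$ large. If $\dim V = n$, the same rigidity reasoning one dimension higher (take $n$ of the $p_i$ spanning a codimension-one subspace $V'$, together with a further $p_i$ outside $\partial V'$ to exclude reflection in $V'$) yields $g_k = \mathrm{id}$ directly. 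The main obstacle is the embedded rigidity claim that an isometry of $\H^d$ fixing a totally-geodesically spanning set of ideal points must be the identity on the spanned subspace; I would prove this by induction on $d$, exploiting that a M\"obius transformation of $S^{d-1}$ is determined by its action on any generic $(d+1)$-tuple of boundary points.
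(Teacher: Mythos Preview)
Your argument is correct and follows essentially the same route as the paper's proof: take a sequence of symmetries converging to the identity, show each eventually preserves a fixed finite set of horoballs whose ideal points span a hyperplane, deduce that the isometry fixes that hyperplane pointwise and is therefore the identity or the reflection, and rule out the reflection by proximity to $\mathrm{id}$. The paper is simply terser---it asserts without elaboration both that $g_i(B_k)=B_k$ for large $i$ and that fixing the $n$ ideal points forces $g_i|_H=\mathrm{id}$---whereas you supply the interior-point/disjointness argument for the first step and flag the boundary rigidity statement behind the second; your separate treatment of $\dim V=n$ is unnecessary (one can always pass to $n$ points spanning a hyperplane, as the paper does) but not wrong.
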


\begin{proof}
  Let $\{g_i\}$ be a sequence of symmetries of the packing converging
  to the identity. Choose horoballs $B_1,\ldots,B_n$ whose ideal
  points span a totally geodesic subspace $H$ of dimension $n-1$.  For
  $i$ sufficiently large, we can assume that $g_i(B_k) = B_k$ for
  $k=1,\ldots,n$. But this implies that these $g_i$ fix $H$
  pointwise. Since the only such isometries are the identity, and
  reflection in $H$, the sequence must be eventually constant. 
\end{proof}

\begin{lemma} \label{ispan}
  Let $M=\H^n/\Gamma$ be a finite volume cusped hyperbolic orbifold.
  The set of parabolic fixed points of $\Gamma$ spans $\H^n$.
\end{lemma}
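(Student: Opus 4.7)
The plan is, in the upper half-space model of $\H^n$, to place one parabolic fixed point of $\Gamma$ at $\infty$ and then use cocompactness of the cusp stabiliser $\Gamma_\infty$ on the horosphere at $\infty$ (identified with $\R^{n-1}$) to show that the other parabolic fixed points cannot all lie in a proper affine subspace of $\R^{n-1}$. Once this is established, a proper totally geodesic subspace $H \subsetneq \H^n$ whose ideal boundary contains $\infty$ corresponds in this picture to a vertical slab $A \times (0,\infty)$ with $A \subsetneq \R^{n-1}$ a proper affine subspace, so the lemma follows.

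First I would use that $M$ is cusped to find a parabolic fixed point, conjugating $\Gamma$ so it becomes $\infty$, and let $\Gamma_\infty$ denote its $\Gamma$-stabiliser. Finite volume of $M$ forces the cusp cross-section $\R^{n-1}/\Gamma_\infty$ to have finite Euclidean $(n-1)$-volume, so by the Bieberbach theorem $\Gamma_\infty$ contains a finite-index subgroup $T$ of pure translations whose translation vectors form a full-rank lattice in $\R^{n-1}$. Next, since no lattice in $\mbox{\rm Isom}(\H^n)$ can be elementary (a parabolic subgroup has infinite covolume), $\Gamma_\infty \subsetneq \Gamma$, so some $\gamma \in \Gamma$ sends $\infty$ to a parabolic fixed point $p \in \R^{n-1}$; hence the set $P$ of non-$\infty$ parabolic fixed points is non-empty and is $\Gamma_\infty$-invariant.

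The key step is then the following. Suppose for contradiction that $P \cup \{\infty\}$ spans only a proper totally geodesic subspace $H \subsetneq \H^n$. By the above dictionary, $H = A \times (0,\infty)$ for some proper affine subspace $A \subsetneq \R^{n-1}$, and $P \subseteq A$. Because $\Gamma$ permutes the parabolic fixed points it preserves their (unique) spanning subspace $H$, hence also preserves $A$. In particular every element of $T \subseteq \Gamma_\infty$ preserves $A$, and so its translation vector must be parallel to $A$. But the translation vectors of $T$ span all of $\R^{n-1}$, a contradiction.

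The only delicate point is the translation between spanning of totally geodesic subspaces of $\H^n$ whose ideal boundary contains $\infty$ and affine spanning in $\R^{n-1}$; once this dictionary is in place, the rest is the familiar remark that a cocompact crystallographic group of $\R^{n-1}$ cannot preserve a proper affine subspace.
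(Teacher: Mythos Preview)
Your proof is correct, but it takes a quite different route from the paper's.  The paper's argument is a one-liner: the parabolic fixed points of a finite-covolume discrete group are dense in its limit set, and for a lattice the limit set is the entire sphere at infinity $\partial\H^n$; a dense subset of $\partial\H^n$ certainly cannot lie in the ideal boundary of a proper totally geodesic subspace.

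Your argument avoids invoking the density of parabolic fixed points in the limit set (a standard but not entirely trivial fact from the theory of geometrically finite groups) and instead works directly with the cusp stabiliser: Bieberbach gives a full-rank translation lattice in $\Gamma_\infty$, and such a lattice cannot preserve a proper affine subspace of $\R^{n-1}$.  This is more elementary and self-contained, needing only Bieberbach and basic upper-half-space geometry, and is arguably more transparent for a reader not already fluent in limit-set theory.  The paper's approach, on the other hand, is much shorter once the cited facts are in hand, and yields the stronger conclusion (density in $\partial\H^n$) for free.
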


\begin{proof}
  The set of parabolic fixed points is dense in the limit set of
  $\Gamma$ which equals the whole of the sphere at infinity.
\end{proof}

\begin{lemma} 
  Let $M$, $M'$ be finite volume cusped hyperbolic orbifolds. Then $M$
  and $M'$ cover a common orbifold $Q$ if and only if they admit
  choices of cusp neighbourhoods lifting to isometric horoball packings.
\end{lemma}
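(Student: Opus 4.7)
The plan is to prove the two directions using the preceding two lemmas in a fairly direct way.

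For the forward direction, suppose $M = \H^n/\Gamma$ and $M' = \H^n/\Gamma'$ both cover $Q$. After conjugating, I can arrange that $\Gamma$ and $\Gamma'$ are both finite-index subgroups of the orbifold group $\Gamma_Q$ of $Q$. Since $Q$ has finite volume, it admits a small embedded cusp neighbourhood at each cusp; picking one, its preimage in $\H^n$ is a horoball packing $\mathcal{H}$ invariant under $\Gamma_Q$, hence invariant under both $\Gamma$ and $\Gamma'$. Then $\mathcal{H}$ simultaneously descends to embedded cusp neighbourhoods in $M$ and in $M'$, and these two cusp neighbourhoods lift to the \emph{same} packing $\mathcal{H}$, which is in particular isometric to itself.

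For the reverse direction, suppose cusp neighbourhoods in $M$ and $M'$ lift to horoball packings $\mathcal{H}$ and $\mathcal{H}'$ and there is an isometry $g\in\mathrm{Isom}(\H^n)$ with $g(\mathcal{H})=\mathcal{H}'$. Replacing $\Gamma$ by $g^{-1}\Gamma g$, I may assume $\mathcal{H}=\mathcal{H}'$. Let $G$ be the full symmetry group of this common packing; then $\Gamma,\Gamma'\subseteq G$ by construction, because $\mathcal{H}$ is $\Gamma$-invariant and $\mathcal{H}'=\mathcal{H}$ is $\Gamma'$-invariant. The set of ideal points of the horoballs in $\mathcal{H}$ is exactly the set of parabolic fixed points of $\Gamma$ associated with the chosen cusps, so by Lemma~\ref{ispan} these ideal points span all of $\H^n$, and in particular span a totally geodesic subspace of dimension $\geq n-1$. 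Lemma~\ref{hdisc} then gives that $G$ is discrete.

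To finish, I need to see that $Q:=\H^n/G$ is the sought common quotient, which requires $\Gamma$ and $\Gamma'$ to have finite index in $G$. Since $\Gamma\subseteq G$ with $G$ discrete and $\mathrm{vol}(\H^n/\Gamma)<\infty$, we get $\mathrm{vol}(\H^n/G)\leq\mathrm{vol}(M)<\infty$, and comparing volumes yields $[G:\Gamma]=\mathrm{vol}(M)/\mathrm{vol}(Q)<\infty$; the same argument applies to $\Gamma'$. Hence $M$ and $M'$ both cover the finite-volume orbifold $Q$. The main obstacle is really the reverse direction, and specifically the step of upgrading from ``$G$ is a symmetry group containing $\Gamma,\Gamma'$'' to ``$\Gamma,\Gamma'$ have finite index in a discrete $G$''; this hinges on Lemma~\ref{hdisc}, whose hypothesis in turn is supplied by Lemma~\ref{ispan} combined with the observation that cusp-neighbourhood horoballs are centred at parabolic fixed points of the acting group.
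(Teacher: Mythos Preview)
Your proof is correct and follows essentially the same route as the paper's: choose cusp neighbourhoods in $Q$ for the forward direction, and for the converse use Lemmas~\ref{hdisc} and~\ref{ispan} to see that the symmetry group $G$ of the common packing is discrete, so $Q=\H^n/G$ works. The paper's version is more terse and leaves the finite-index step implicit, whereas you spell it out via the volume comparison; this is a welcome clarification but not a different argument.
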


\begin{proof}
If $M$ and $M'$ cover $Q$, choose cusp neighbourhoods in $Q$ and lift to
$M$ and $M'$. These all lift to the same horoball packing in $\H^n$,
namely the horoball packing determined by our choice of cusp
neighbourhoods in $Q$. Conversely, both $M$ and $M'$ cover the quotient of
$\H^n$ by the group of symmetries of their common horoball packing
which, by Lemmas~\ref{hdisc} and \ref{ispan}, is discrete. 
\end{proof}

We can define the cusp density of a {\em $1$-cusped} 
hyperbolic orbifold $M$ as follows. Since $M$ has only one cusp it has
a unique maximal (embedded) cusp neighbourhood $U$. The {\em cusp density} of
$M$ is $\vol(U)/\vol(M)$. 
Since the cusp density of any orbifold
covered by $M$ is the same, it is a commensurability invariant of
orbifolds with discrete commensurator. 

Choosing a full set of disjoint cusp neighbourhoods in a non-compact finite volume
hyperbolic $n$-manifold $M$ determines a ``Ford spine.'' This is the
cell complex given by the set of points in $M$ equidistant from the
cusp neighbourhoods in two or more directions.  Cells of dimension
$n-k$ contain points equidistant from the cusp neighbourhoods in $k+1$
independent directions ($k=1, \ldots, n$). This spine can also be seen
intuitively as the ``bumping locus'' of the cusp neighbourhoods: blow
up the cusp neighbourhoods until they press against each other and
flatten.

Dual to the Ford spine is a decomposition of $M$ into ideal polytopes, 
generically simplices. The ideal cell dual to a given $0$-cell of the
Ford spine lifts to the convex hull in $\H^n$ of the set of ideal points
determined by the equidistant directions. We call the cell
decompositions that arise in this way {\em canonical.}\footnote{The
  term is not really ideal since, for manifolds with multiple cusps,
  there are generally multiple canonical cell decompositions depending
  on the choice of cusp neighbourhoods.}  For a 1-cusped manifold the
canonical cell decomposition is unique.  It is shown in \cite{Ak} that
a finite volume hyperbolic manifold with multiple cusps admits
finitely many canonical cell decompositions.

\begin{theorem} \label{comm_criterion}
Cusped hyperbolic $n$-manifolds $M$ and $M'$ cover a
common orbifold if and only if they admit canonical ideal cell
decompositions lifting to isometric tilings of $\H^n$.
\end{theorem}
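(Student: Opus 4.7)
\begin{pf}
The plan is to prove the two implications separately.

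For the forward direction, if $M$ and $M'$ cover a common orbifold $Q$, I would invoke the preceding lemma to obtain cusp neighbourhoods in $M$ and $M'$ whose lifts to $\H^n$ are isometric horoball packings; after applying the isometry, I may take them to be a common horoball packing $\mathcal{H}$. The canonical cell decompositions associated to these cusp neighbourhoods are, by construction, the projections of the Epstein--Penner (Ford) decomposition of $\H^n$ determined by $\mathcal{H}$, and therefore both lift to a common tiling of $\H^n$.

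For the backward direction, write $M = \H^n/\Gamma$ and $M' = \H^n/\Gamma'$, and let $T$ and $T'$ be the lifts to $\H^n$ of the given canonical cell decompositions. Conjugating $\Gamma'$ by the isometry carrying $T'$ to $T$, I may assume $T = T'$, so the full symmetry group
$$G := \{g \in \mathrm{Isom}(\H^n) : g(T) = T\}$$
contains both $\Gamma$ and $\Gamma'$. It will suffice to show $G$ is discrete: then $[G:\Gamma]$ and $[G:\Gamma']$ are finite (comparing the finite covolumes of $\Gamma, \Gamma'$ with the positive covolume of the discrete group $G$), and $Q := \H^n/G$ is the desired common orbifold quotient.

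The main obstacle is this discreteness of $G$, which I plan to establish via a cell-stabilizer argument. Fix a top-dimensional cell $c$ of $T$. Since $c$ is a non-degenerate ideal polytope with finitely many ideal vertices spanning $\H^n$, any self-isometry of $c$ is determined by its permutation of these vertices, so the setwise stabilizer $G_c$ is finite. Suppose now that $\{g_k\} \subset G$ tends to the identity, and pick an interior point $p$ of $c$; then $g_k(p) \to p$ lies in the interior of $c$ for large $k$, and since each $g_k$ permutes cells of $T$, this forces $g_k(c) = c$, so $g_k \in G_c$ eventually. Finiteness of $G_c$ then gives $g_k = 1$ for all sufficiently large $k$, completing the proof that $G$ is discrete.
\end{pf}
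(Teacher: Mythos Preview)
Your proof is correct and follows essentially the same line as the paper's: the forward direction pushes cusp neighbourhoods down to $Q$ and back up (you route this through the preceding lemma, the paper does it directly, but it is the same idea), and the converse quotients by the symmetry group of the common tiling. The paper's converse is a single sentence that takes discreteness of this symmetry group for granted; your cell-stabilizer argument (finite stabilizer of a top cell, plus local constancy of the cell containing an interior point) is the natural way to justify it and is consistent with the paper's subsequent remark that the word ``canonical'' can be dropped.
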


\begin{proof}
If $M$ and $M'$ cover $Q$, choose cusp neighbourhoods  
in $Q$ and lift them to
$M$, $M'$ and $\H^n$. Constructing the Ford spine and cell
decomposition in $\H^n$ clearly yields the lifts of those entities
from both $M$ and $M'$ corresponding to our choice of
cusp neighbourhoods.

Conversely, observe that the symmetry group of the common tiling
gives an orbifold which is a quotient of both manifolds.
\end{proof}

\begin{remark} We can omit the word `canonical' in the above
theorem. The proof is unchanged.
\end{remark}

The previous theorem gives the following characterization of the commensurator.

\begin{theorem} \label{commens=maxsymm}
  Let $M = \H^n/\Gamma$ be a finite volume cusped hyperbolic
  $n$-manifold with discrete commensurator. Then $\comm(\Gamma)$ is
  the maximal symmetry group of the tilings of $\H^n$ obtained by lifting 
  canonical cell decompositions of $M$; it contains all such symmetry groups.
\end{theorem}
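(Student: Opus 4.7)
The plan is to prove two inclusions. Write $\comm = \comm(\Gamma)$, let $\mathcal{T}$ be the family of tilings of $\H^n$ obtained by lifting canonical cell decompositions of $M$, and for each $T \in \mathcal{T}$ let $G_T \le \mbox{Isom}(\H^n)$ denote its symmetry group. I will show (a) $G_T \subseteq \comm$ for every $T \in \mathcal{T}$, and (b) some $T_0 \in \mathcal{T}$ satisfies $G_{T_0} = \comm$. Together these identify $\comm$ as the unique largest element among the $G_T$.

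For (a), fix $T \in \mathcal{T}$. Since $T$ is the $\Gamma$-equivariant lift of a cell decomposition of $M$, we have $\Gamma \subseteq G_T$. The vertex set of $T$ is the set of ideal points of the corresponding horoball packing, which are parabolic fixed points of $\Gamma$ and therefore span $\H^n$ by Lemma \ref{ispan}. Hence Lemma \ref{hdisc} applies to show $G_T$ is discrete. Moreover $G_T$ has finite covolume because $\H^n/G_T$ is a quotient of $M$, so $[G_T:\Gamma] < \infty$. For any $g \in G_T$, the subgroup $g\Gamma g^{-1}$ is again finite index in $G_T$, so $\Gamma \cap g\Gamma g^{-1}$ has finite index in $\Gamma$, placing $g$ in $\comm$. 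Thus $G_T \subseteq \comm$.

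For (b), use the hypothesis that $\comm$ is discrete. Since $\Gamma \subseteq \comm$ has finite covolume, so does $\comm$, and $Q := \H^n/\comm$ is a finite-volume cusped hyperbolic orbifold covered by $M$. Choose any full set of disjoint cusp neighbourhoods in $Q$ and lift them to a horoball packing $\mathcal{H}$ of $\H^n$ invariant under $\comm$; this packing descends to $M$, determining a canonical cell decomposition $\mathcal{D}$ of $M$ by the Ford-spine construction. The lift $T_0$ of $\mathcal{D}$ to $\H^n$ is precisely the canonical decomposition of $\H^n$ associated with $\mathcal{H}$, so it is $\comm$-invariant and $\comm \subseteq G_{T_0}$. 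Combined with (a), this gives $G_{T_0} = \comm$.

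The main obstacle is verifying in (b) that the tiling $T_0$ built from cusp neighbourhoods in $Q$ genuinely arises by lifting a canonical cell decomposition of $M$—one must check that the Ford-spine construction commutes with passage through the tower $\H^n \to M \to Q$, so that the $\comm$-invariant packing $\mathcal{H}$ produces the same decomposition whether viewed from $M$ or from $Q$. This is essentially the content established in the proof of Theorem \ref{comm_criterion}, applied with $M' = M$ to the common cover $Q = \H^n/\comm$, so the argument reduces to invoking material already developed in this section.
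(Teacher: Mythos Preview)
Your argument is correct and is exactly the approach the paper intends: the theorem is presented there as an immediate consequence of Theorem~\ref{comm_criterion}, and you have simply spelled out the two directions (every tiling symmetry group is a discrete overgroup of $\Gamma$, hence lies in $\comm(\Gamma)$; conversely, cusp neighbourhoods chosen in $Q=\H^n/\comm(\Gamma)$ lift to a $\comm(\Gamma)$-invariant canonical tiling). One small point of hygiene: Lemma~\ref{hdisc} as stated is about horoball packings, not tilings, so your discreteness step for $G_T$ really uses the analogous (and equally easy) fact that a locally finite tiling by ideal polytopes has discrete symmetry group---the paper itself leaves this implicit in the proof of Theorem~\ref{comm_criterion}.
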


For manifolds with discrete commensurator, we can now define a {\em
truly canonical} ideal cell decomposition as follows. Find
$\comm(\Gamma)$ as in the above theorem. Choose 
{\em equal volume} cusp neighbourhoods
in $\H^n/\comm(\Gamma)$. Lift them to $M$ and take the
resulting canonical cell decomposition of $M$. Two such manifolds are
commensurable if and only if their truly canonical cell decompositions
give isometric tilings of $\H^n$. 

Choosing maximal cusp neighbourhoods in $\H^n/\comm(\Gamma)$ also  gives a
canonical version of {\em cusp density} for multi-cusped manifolds.

Theorem  \ref{commens=maxsymm} is the basis for the algorithms described in this paper. Canonical cell decompositions can be computed by the algorithms of 
Weeks described in \cite{We1} and implemented in SnapPea \cite{We}. 
  In Section 4 below we give an algorithm for finding the isometry groups of the corresponding tilings of $\H^n$. Combining this with Theorem 
  \ref{commens=maxsymm} gives an algorithm for finding commensurators of 
  {\em 1-cusped} non-arithmetic hyperbolic $n$-manifolds.
In Sections 5 and 6 we extend this algorithm to {\em multi-cusped} manifolds, 
by describing methods for finding {\em all} canonical cell decompositions. 

For hyperbolic 3-manifolds, these algorithms have been implemented by Heard and Goodman.  (These are incorporated in ``{\tt find commensurator}'' and related commands in the program Snap \cite{Snap}).
We conclude this section with some examples discovered during this work.

\subsection{Example: a 5-link chain and friends}\label{5_links}
The following five links have commensurable complements, as shown using our computer program. 
In the first three cases at least it is possible to `see' this
commensurability. 
$$
\includegraphics[height=2in]{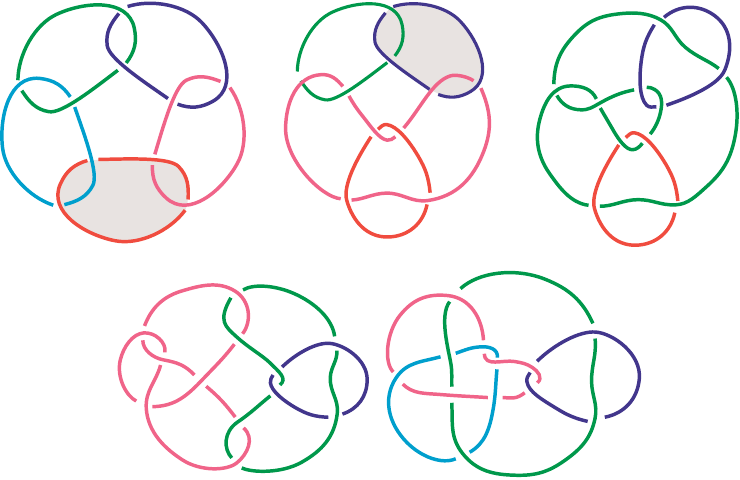}
$$
The first of these is the $5$-link chain $C_5$. Thurston 
\cite[Chapter 6]{Th} explains how to obtain a fundamental region for
the hyperbolic $k$-link chain complements: we span each link of the
chain by a disk in the obvious manner. The complement of the union of these
five disks is then
a solid torus. Once the link is deleted, 
the disks and their arcs of intersection divide the boundary of the solid 
torus into ideal squares $A,B,C,D,E$ as shown, with cusps labelled $a,b,c,d,e$. 

$$
\includegraphics[height=1.5in]{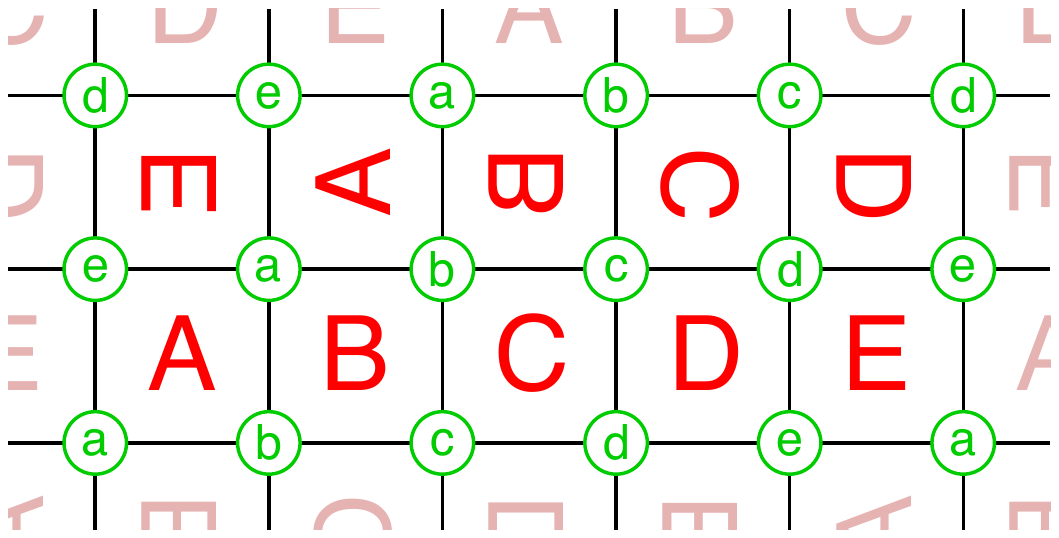}
$$

A hyperbolic structure is given by taking two regular pentagonal drums
with ideal vertices and adjusting their heights to obtain (ideal) square
faces. Glue two drums together as shown, identify the top with
the bottom via a $\frac{4\pi}{5}$ rotation, and glue faces as
indicated. Edges are then identified in $4$'s, two horizontal with two
vertical. It is easy to check that the sum of dihedral angles 
around each edge is $2\pi$ so this gives a hyperbolic structure, since the angle sum is $\pi$ at each ideal vertex of a drum.
$$
\includegraphics[height=2in]{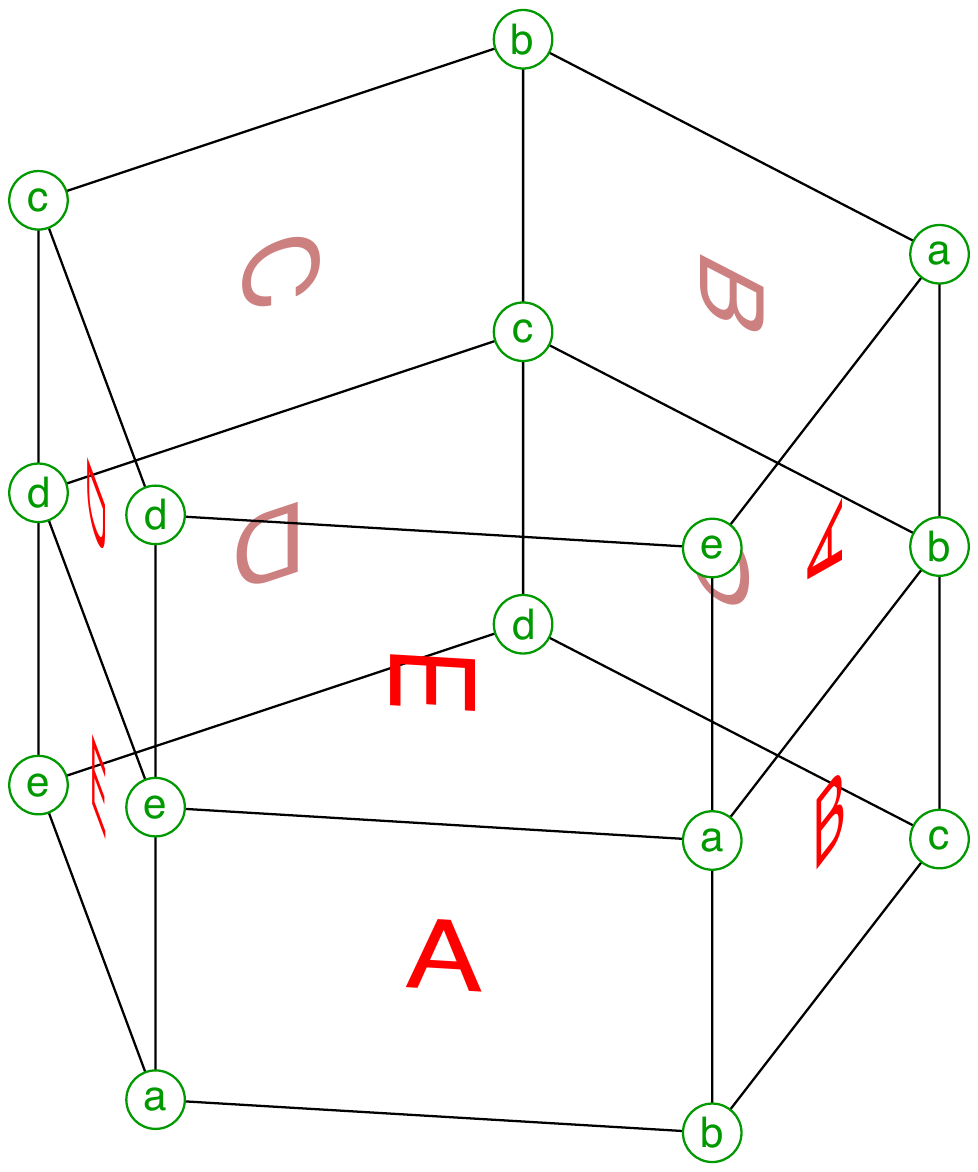}
$$
It is clear from the symmetry of the picture that the drums are cells
in the canonical cell decomposition obtained by choosing equal area
cusp cross sections. We remark that Neumann-Reid show that this link complement
is non-arithmetic in \cite[Section 5]{NR}.

Now change the $5$-link chain by cutting along the shown disk,
applying a half twist, and re-gluing to obtain our second link. 
In the complement, this surgery introduces a half turn into the gluing
between the $A$-faces. Edges are still identified in $4$'s, 
two horizontal with
two vertical, but there are now only four cusps. 
$$
\epsfig{file=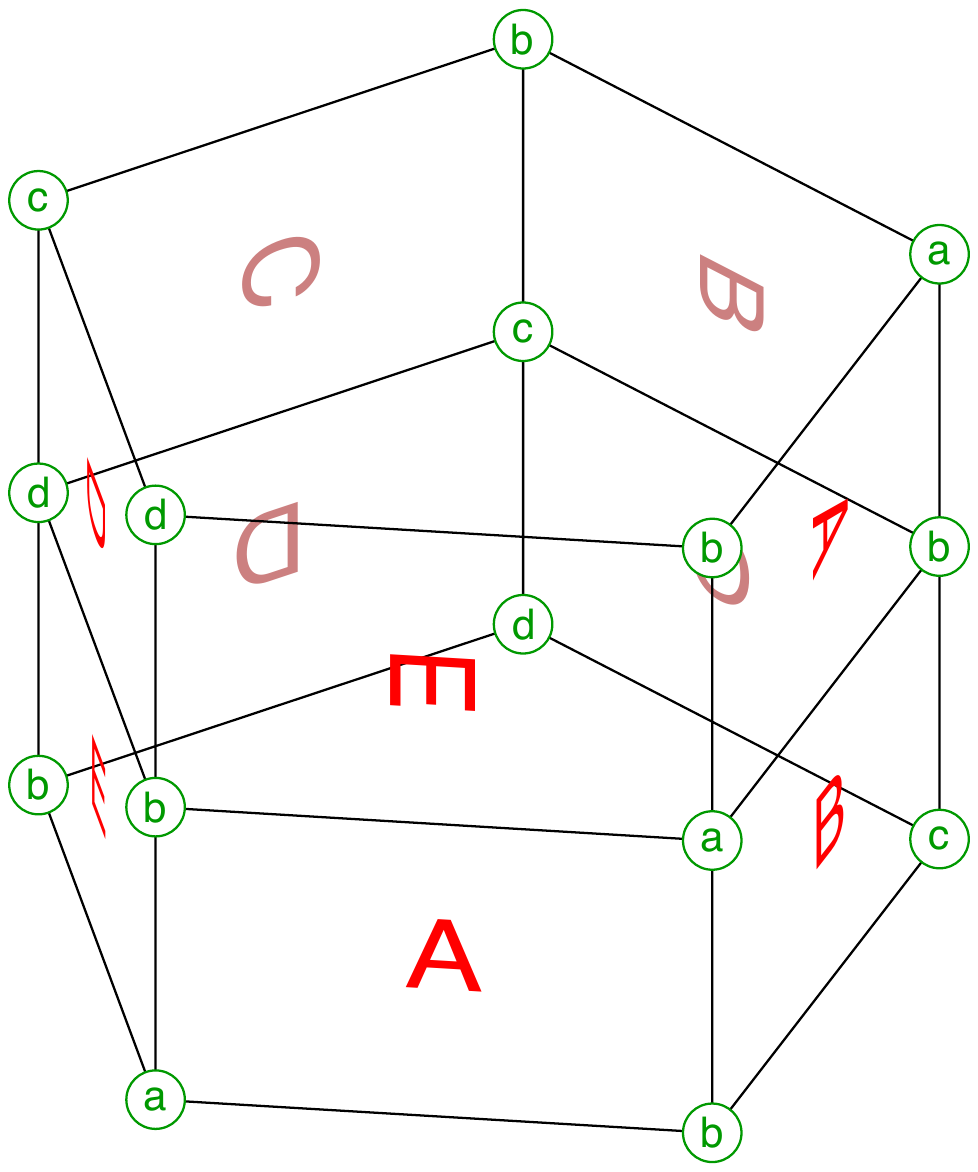, height=2in}
$$
If we repeat the process on the second disk shown we obtain the third
link. Again this corresponds to changing the gluing pattern
on our two drums. 

Since these link complements are non-arithmetic, the tiling of $\H^3$
by pentagonal drums covers some canonical cell decomposition of each
one.  Since their volumes are the same, each one decomposes into two
pentagonal drums.  It should therefore be possible, in each case, to
find $5$ ideal squares meeting at order $4$ edges, cutting the
complement into one or two solid tori. We leave this as a challenge
for the reader. 

\subsection{Example: commensurable knot complements}\label{comm_knots}
Commensurable knot complements seem to be rather rare.  Previously
known examples include the Rubinstein-Aitchison dodecahedral knots
\cite{AR} and examples due to a construction of Gonz\'{a}les-Acu\~{n}a
and Whitten~\cite{GW} giving knot complements covering other knot
complements. For example, the $-2,3,7$ pretzel knot has $18/1$ and
$19/1$ surgeries giving lens spaces. Taking the universal covers of
these lens spaces gives new hyperbolic knots in $S^3$ whose complements are $18$-
and $19$-fold cylic covers of the $(-2,3,7)$--pretzel complement.

Our program finds a pair of knots ``9n6'' and ``12n642'', having 9 and 12 crossings
respectively, whose complements are commensurable with volumes in the
ratio $3:4$. 
Walter Neumann has pointed out that these knots belong to
a very pretty family of knots: take a band of $k$ repeats of a trefoil
with the ends given $m$ half twists before putting them
together. E.g. $(k,m) = (3,2)$: 
\[\epsfig{file=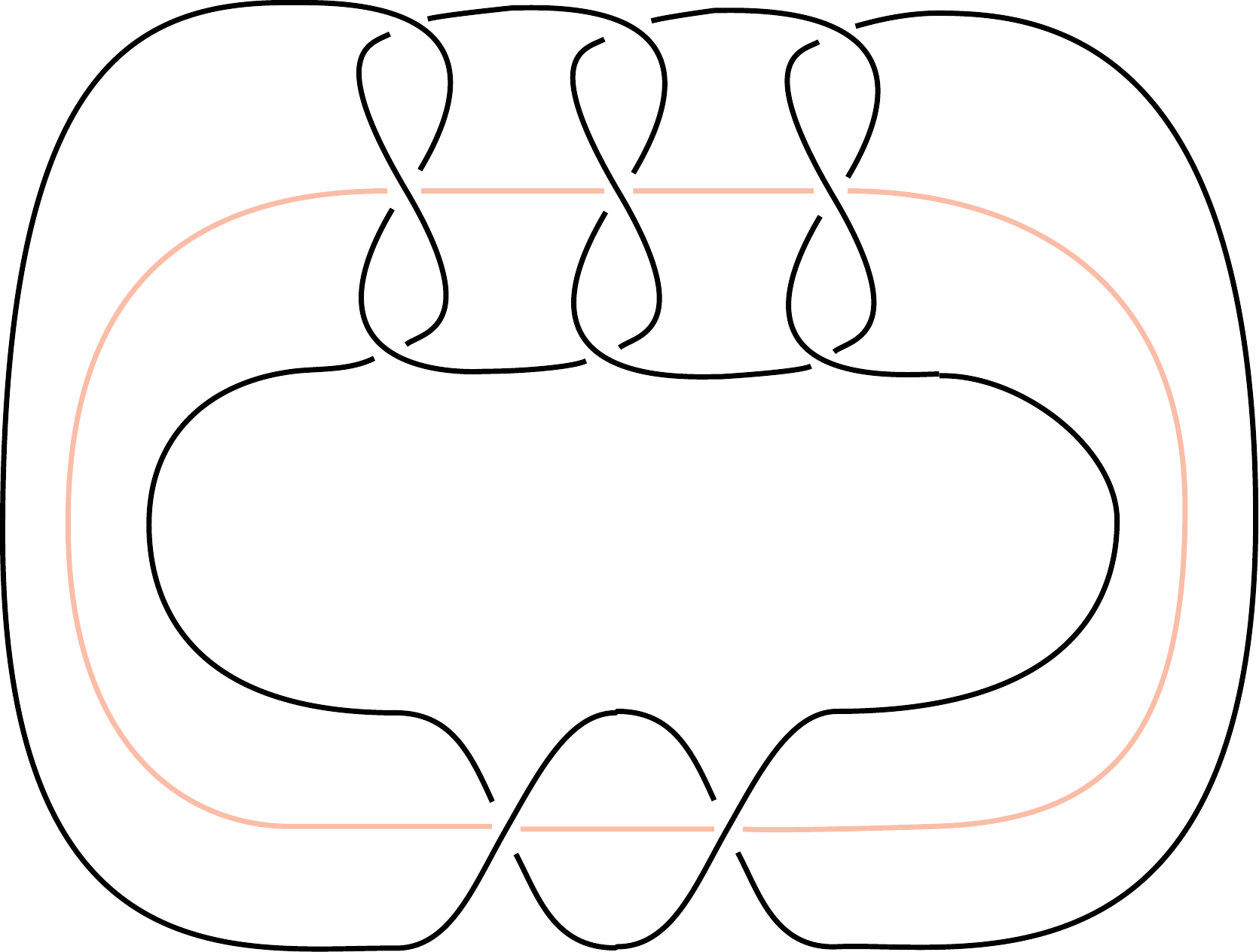, height=4cm}\] 
The half-twists are put in so as to undo some of the crossings
of the trefoils (allowing the above projection to be rearranged
so as to have 9 crossings). The pair of knots found by our program
correspond to $(k,m) = (3,2)$ and $(4,1)$. 

To see that these knots have commensurable complements we find a
common quotient orbifold. In each case this is the quotient of the
knot complement by its symmetry group; 
these are dihedral groups of order 12 and order 16 respectively.

The picture of $9n6$ above shows an
obvious axis of $2$-fold symmetry; below left is the quotient, which is the
complement of a knot in the orbifold $S^3$ with singular set an unknot labelled $2$.
By pulling the knot
straight, we see that this is an orbifold whose underlying space is a
solid torus with knotted singular locus.
\[\mbox{
  \begin{picture}(0,0)(0,0)
    \put(123,65){$\scriptstyle 2$} 
  \end{picture}}
\epsfig{file=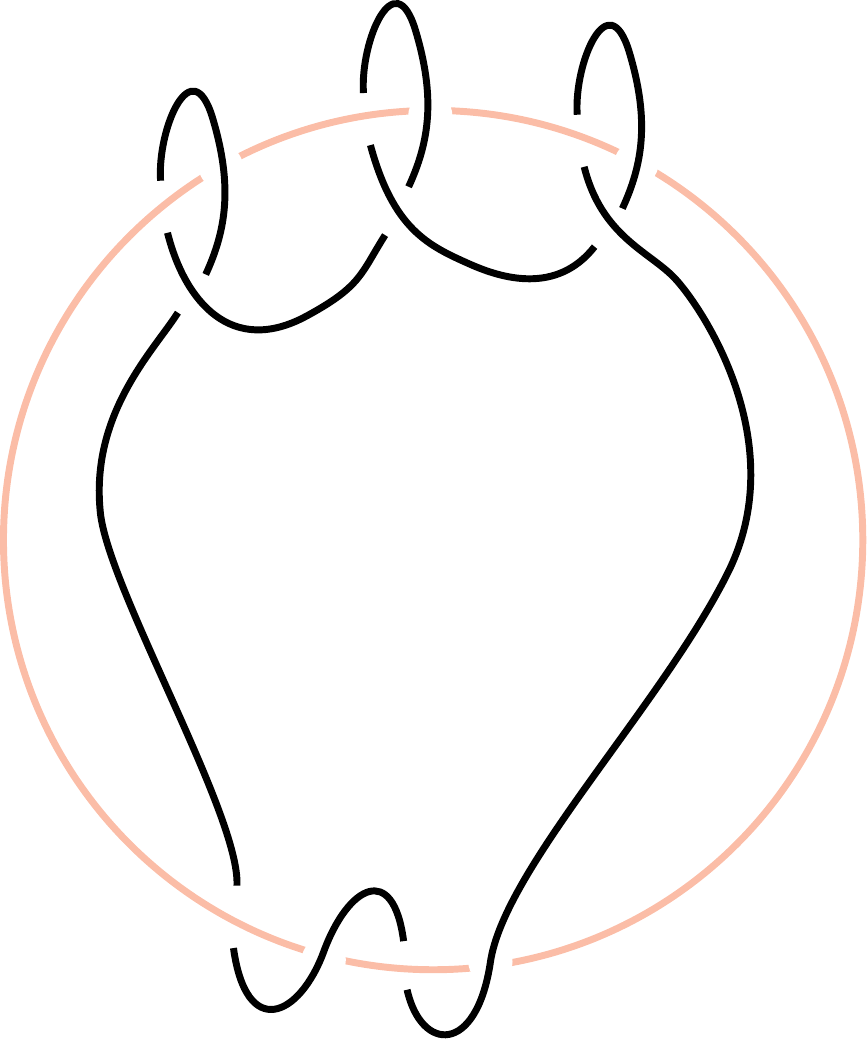, height=3.5cm}\hspace{2cm}
\mbox{
  \begin{picture}(0,0)(0,0)
    \put(138,65){$\scriptstyle 2$} 
  \end{picture}}
\epsfig{file=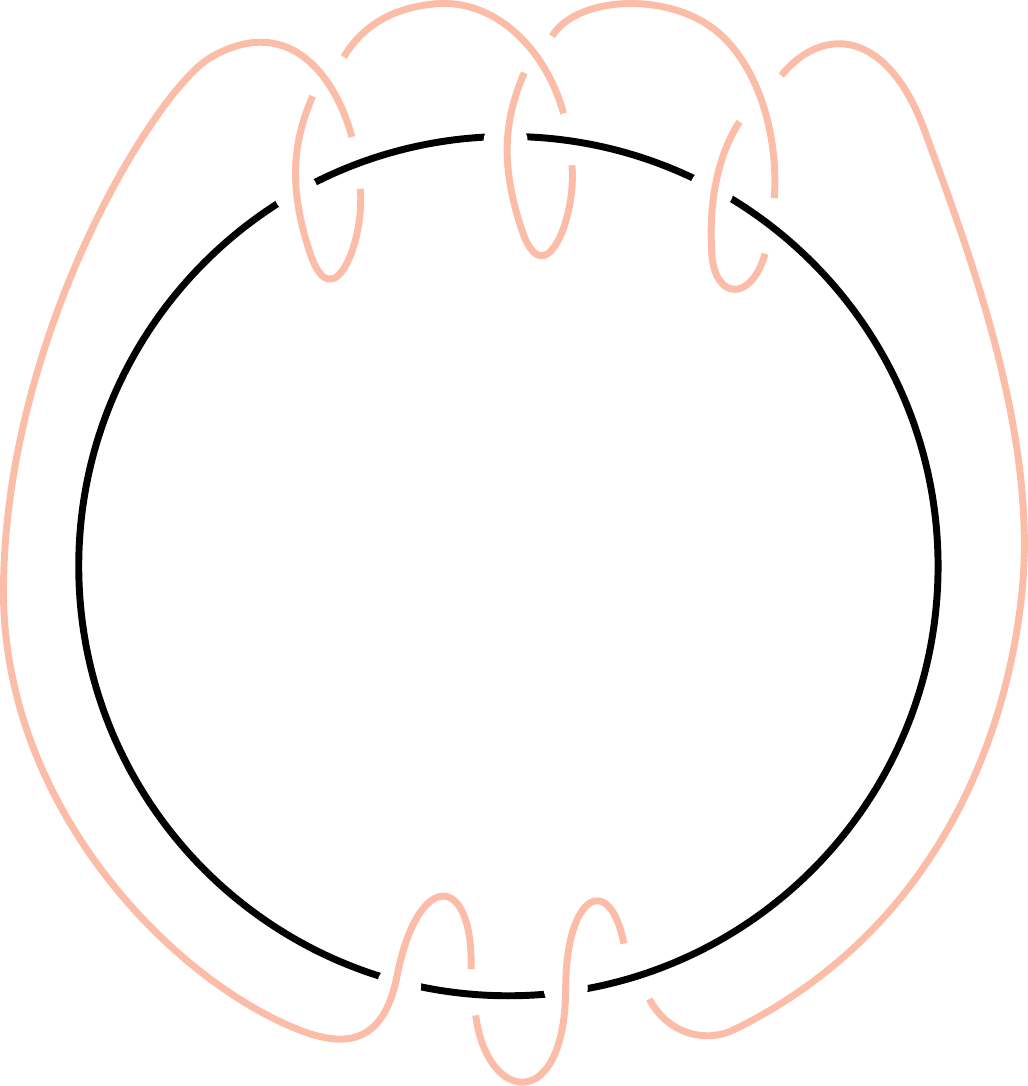,height=3.5cm}
\]

If we arrange the singular locus on a torus parallel to the boundary of
the solid torus we see 3 clasps, 3 strands in the (vertical) core
direction, and a strand with slope $2/1$. The view from inside the solid torus looking towards the boundary is shown below.
(For the knot $(4,1)$ we
would see 4 clasps, 4 strands in the core direction and a strand with
slope $1/1$.)
\[\epsfig{file=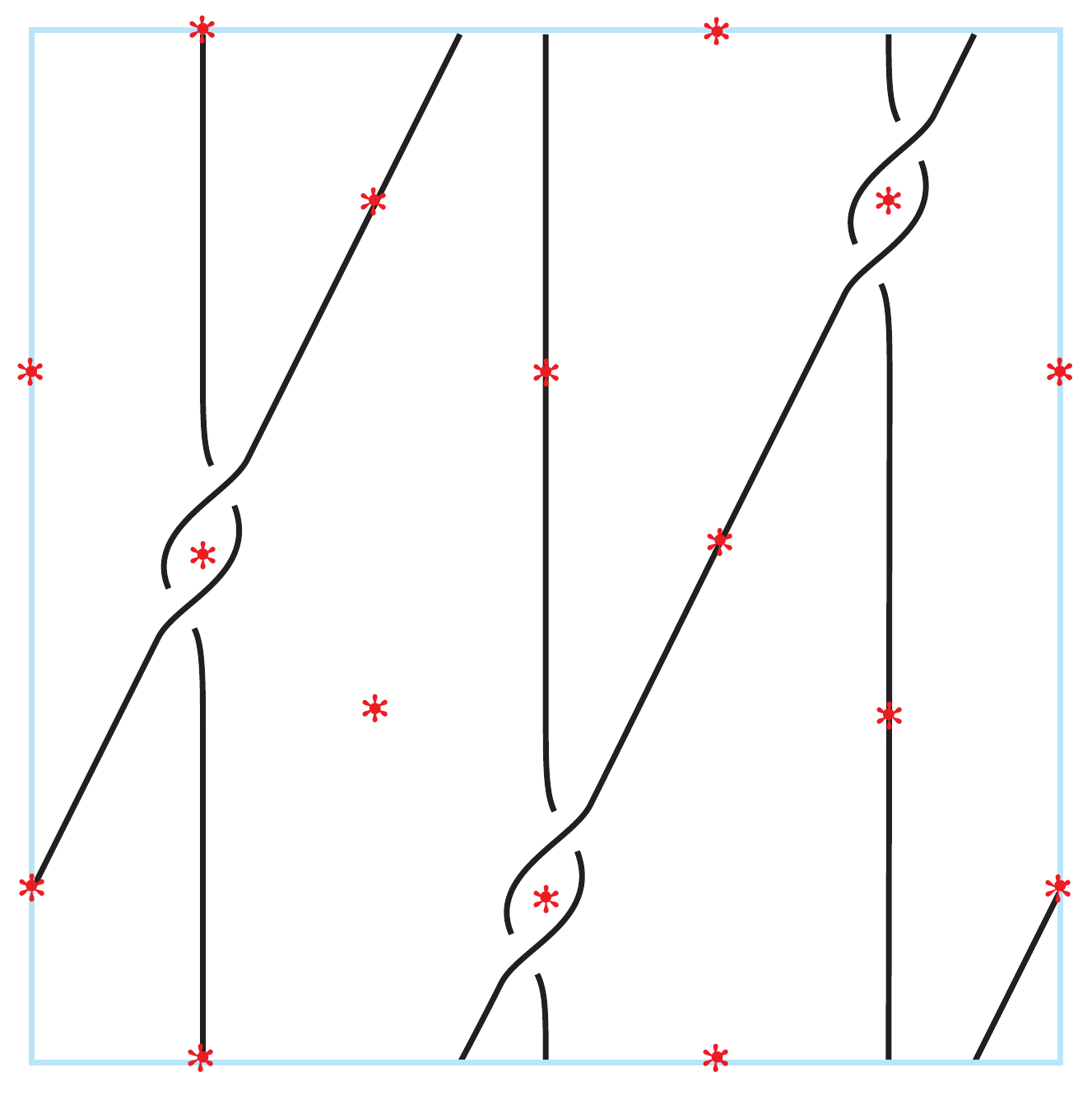, height=4cm}\]
The solid torus with its singular set has three 2-fold symmetries 
whose axes intersect the solid torus in 6 arcs,
each passing perpendicularly through the core like a skewer, with
symmetry group dihedral of order 6. The ends of the arcs are shown as stars
above. 

The quotient orbifold is obtained
by taking a slice of the solid torus between two axes and folding
closed the top and bottom disks like books. The result is a ball with
the axes giving two unknotted arcs of order 2 in the singular set,
running out to the boundary (which is now a $(2,2,2,2)$--pillowcase
orbifold). The original singular set gives an arc linking the other
two, so that the whole singular locus is an `H' graph labelled with
2's. 

The last three pictures show what happens to the singular locus in one
slice of the solid torus as we fold. We begin with the annulus in the bottom 1/6th
of the previous figure, redrawn after twisting the bottom. This bounds a solid
cylinder with the singular locus as shown in the middle figure. Folding along the top and bottom (and expanding the region slightly) gives the final result.
\[\raisebox{.5cm}{\epsfig{file=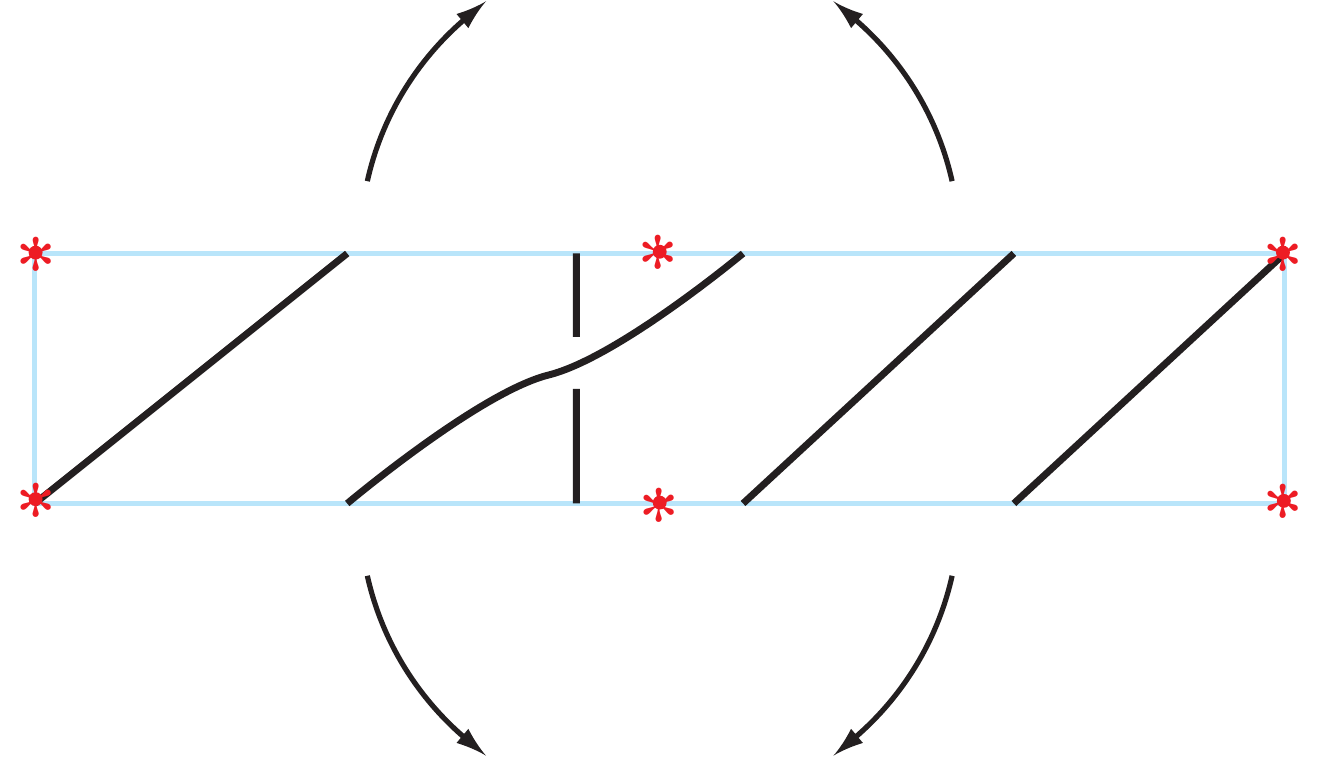, width=4.8cm}}\hspace{.45cm}
\raisebox{.25cm}{\epsfig{file=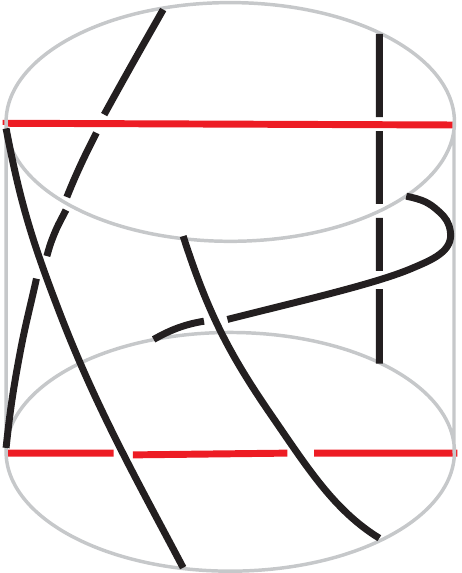, height=3.5cm}}\hspace{.5cm}
\epsfig{file=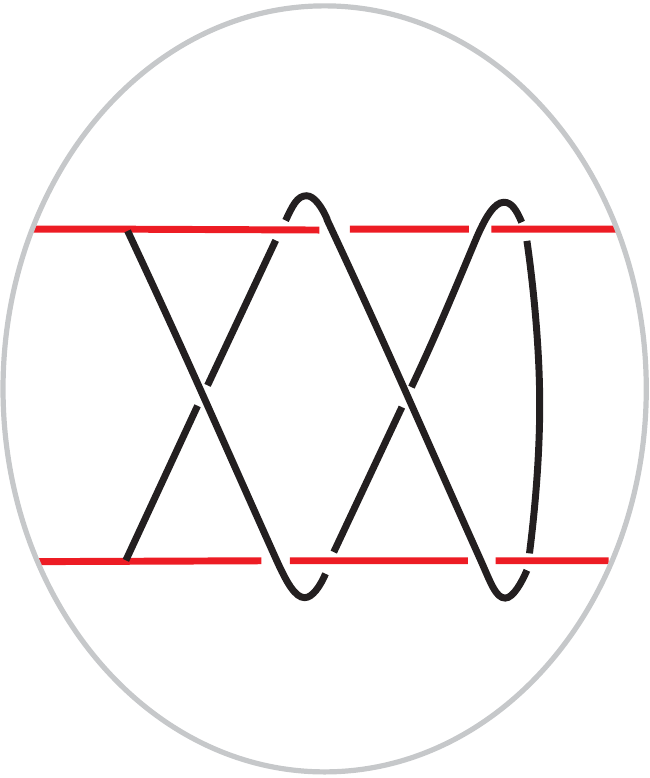,height=4cm}
\]
We leave it to the reader to draw similar pictures for the knot with
$(k,m) = (4,1)$ and verify that the result is indeed the same
orbifold. Alternatively, Orb \cite{Orb} or SnapPea \cite{We} can be used to verify that
the appropriate dihedral covers of the final orbifold give the complements of
the knots with $(k,m) = (3,2)$ and $(4,1)$.

We remark that Walter Neumann has found an infinite family of new examples of pairs of commensurable knot complements in the $3$-sphere; this example is the simplest case.

\subsection{Example: cusp horoball pictures} \label{hball_diff}
Figure~\ref{hball_different} shows the horoball packings of two
1-cusped census manifolds {\tt m137} and {\tt m138} as seen from the cusp. 
Using Snap  \cite{Snap}, we find that the commensurability
classes of these two equal-volume manifolds are indistinguishable by
cusp density or invariant trace field. Their maximal horoball packings
and canonical cell decompositions are however different.
(For example, the edges joining degree 4 vertices in the following cusp diagrams
are all parallel for m137, but not for m138.)

\begin{figure}[h] 
\epsfig{file=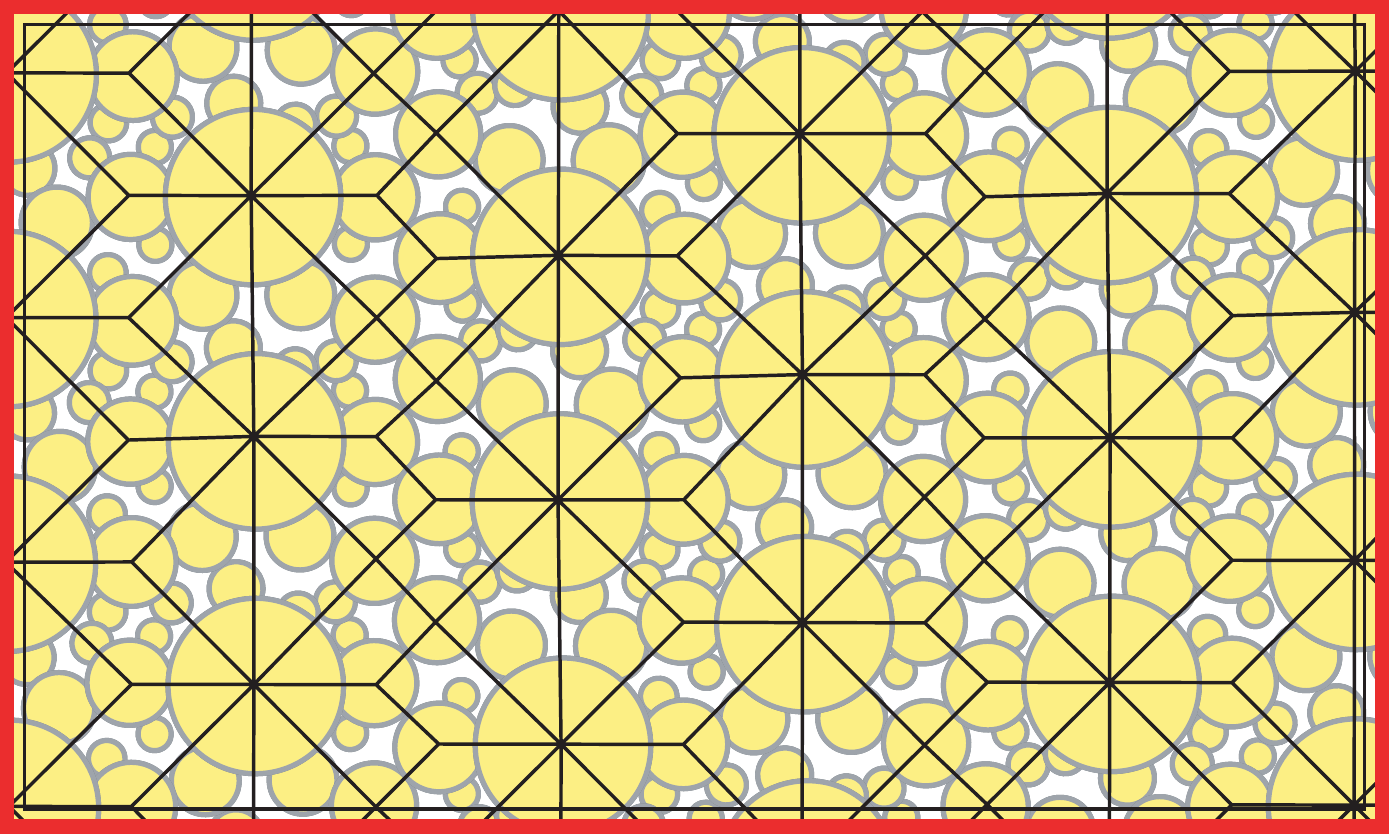, height=1.8in} 
\epsfig{file=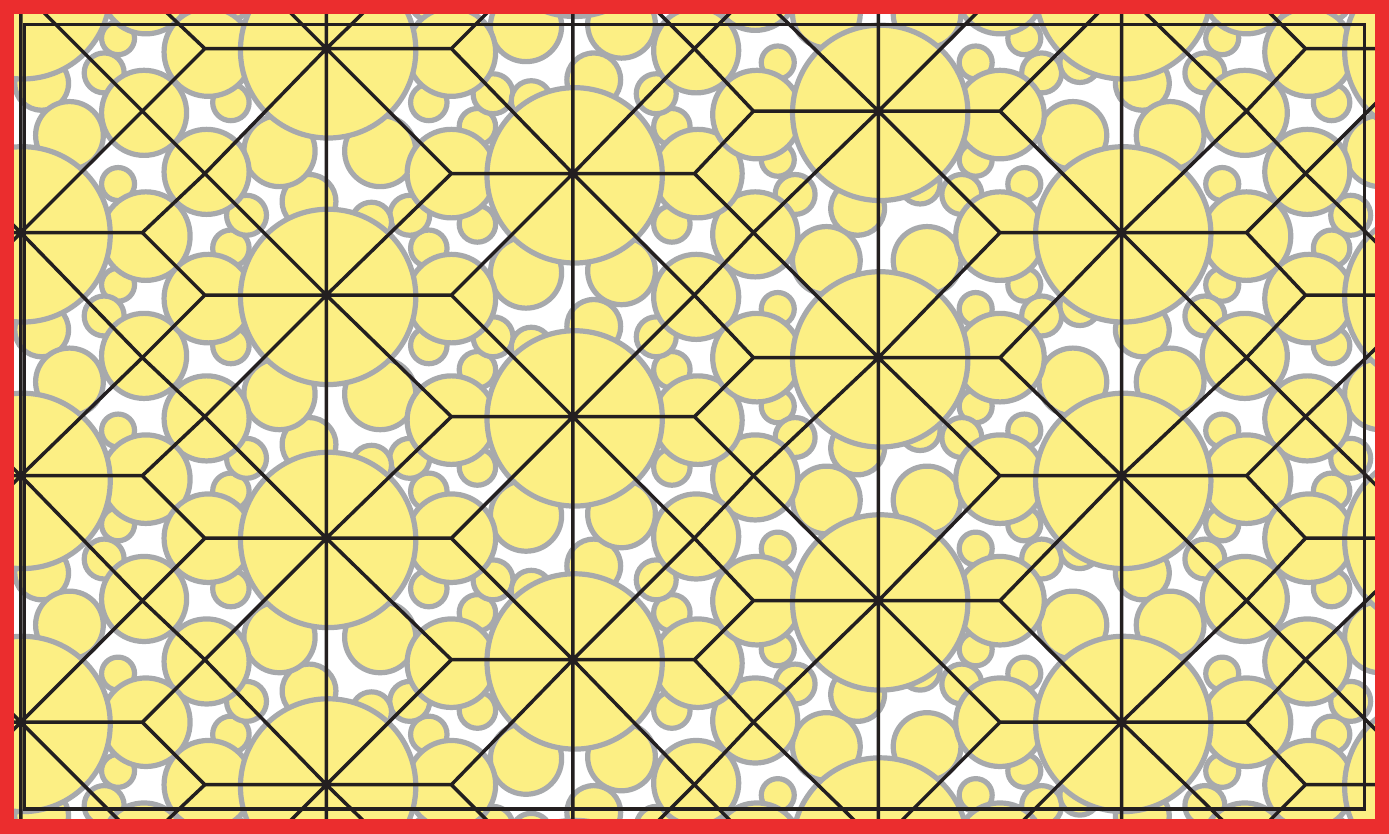, height=1.8in}
\caption{ 
The maximal horoball packing and canonical
cell decomposition as seen from the cusps of 
manifolds {\tt m137} and {\tt m138}.}
\label{hball_different}
\end{figure}

\section{Example: punctured torus bundles}
The bundles over $S^1$ with a once-punctured torus as fibre provide an interesting 
family of 1-cusped hyperbolic 3-manifolds with known
canonical triangulations. By analysing how symmetries of the lifted
triangulations of the universal cover appear 
when viewed from the cusp we obtain (in all the non-arithmetic cases)  
strong constraints on what symmetries may be possible. 
We then show that they all come from symmetries of the manifold.  
This leads to the following:

\begin{theorem} \label{nohs}
Let $M=\H^3/\G$ be an orientable non-arithmetic hyperbolic $3$-manifold 
which is a once-punctured torus bundle over $S^1$. Then $M$ has no ``hidden symmetries'',
i.e. the commensurator of $\G$ is the normalizer of $\G$ in $\mbox{\rm Isom}(\H^3)$.
\end{theorem}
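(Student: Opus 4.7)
The plan is to apply Theorem~\ref{commens=maxsymm}. Since a once-punctured torus bundle $M$ has a single cusp, its canonical cell decomposition is unique, and $\comm(\Gamma)$ equals the symmetry group of its lift to a tiling $\mathcal{T}$ of $\H^3$. So it suffices to show that every symmetry of $\mathcal{T}$ descends to an isometry of $M$, and therefore normalizes $\Gamma$.

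First I would invoke the Floyd--Hatcher monodromy triangulation of $M$: the monodromy $\phi \in \SL(2,\Z)$ is, up to conjugacy, a cyclic word in $L$ and $R$ of length $n$, and $M$ is built from $n$ ideal tetrahedra layered one per letter. Lift this triangulation to $\mathcal{T}$, and let $S$ be its symmetry group. Any $g \in S$ permutes the cusps of $\mathcal{T}$, which form a single $\Gamma$-orbit, so after composing with an element of $\Gamma$ we may assume $g$ preserves a chosen horoball $B$. Then $g$ restricts to a Euclidean symmetry of the induced pattern $\mathcal{T}|_{\partial B}$ on the horosphere, which covers the cusp torus of $M$.

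The core task is to classify the symmetries of $\mathcal{T}|_{\partial B}$ that extend to $\mathcal{T}$. The pattern on $\partial B$, including the $L$/$R$ labelling of triangles and the Euclidean shape determined by each parent tetrahedron, is prescribed by the bi-infinite periodic $LR$-word of $\phi$. The ``obvious'' symmetries are: (a) translations from the cusp subgroup of $\Gamma$; (b) a $180^\circ$ rotation about a vertex, corresponding to reversing the bundle direction; and (c), in special cases when the $LR$-word admits a palindromic symmetry, an orientation-reversing involution of $M$. Each of these lifts to a known isometry of $M$, hence lies in the normalizer of $\Gamma$. I would then show, by reading off the constraints imposed by the $L$/$R$ decoration and the shape parameters, that these three types exhaust~$S$.

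The main obstacle is to exclude extra affine symmetries of the cusp pattern uniformly. The figure-eight knot complement and its sister are precisely the cases where this fails: the cusp torus is square, the tetrahedra are regular, and an extra $4$-fold symmetry appears, producing genuine hidden symmetries and forcing arithmeticity. The argument must show that any candidate exotic symmetry forces enough coincidences among the shape parameters of the $L$ and $R$ tetrahedra that $\phi$ is conjugate to $RL$ or $LR$ up to inversion --- i.e., $M$ is one of the two arithmetic bundles, which are excluded by hypothesis. Once this classification is complete, every symmetry of $\mathcal{T}$ descends to $M$, and Theorem~\ref{commens=maxsymm} yields $\comm(\Gamma)$ equal to the normalizer of $\Gamma$ in $\mbox{\rm Isom}(\H^3)$.
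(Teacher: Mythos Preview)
Your overall strategy matches the paper's: reduce via Theorem~\ref{commens=maxsymm} to the symmetry group of the lifted canonical triangulation, then pass to the induced cusp triangulation $T_0$ on a horosphere and classify its symmetries. One point you should make explicit is that the Floyd--Hatcher monodromy triangulation \emph{is} the canonical cell decomposition; this is a theorem of Lackenby (with other proofs by Gu\'eritaud and by Akiyoshi--Sakuma--Wada--Yamashita), and without it Theorem~\ref{commens=maxsymm} does not apply.

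There is a genuine gap in your enumeration of symmetries. The paper finds five families of symmetries of $T_0$ (modulo the cusp lattice): a universal shift by two strips coming from $-I$ on the fibre; a horizontal translation of order $m$ when $w=u^m$ is a proper power; a $\pi$-rotation when $w$ is palindromic; and two distinct glide reflections, arising when a half-period shift of $w$ swaps $L\leftrightarrow R$, respectively when reversing $w$ swaps $L\leftrightarrow R$. Each is then exhibited concretely as an isometry of $M$. Your list (a)--(c) omits the power case and both glide reflections, so the assertion that these three types ``exhaust $S$'' is false as stated; the theorem survives only because the missing types are also not hidden, but your classification step as written would fail.

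Your treatment of the exceptional cases is also off. The arithmetic once-punctured torus bundles have $w$ a power of $LR$, $LLR$ (or $LRR$), or $LLRR$, not just $LR$. The paper's analysis is purely combinatorial --- it uses vertex orders in $T_0$ and the induced edge-order labels from $T$ to show that any simplicial automorphism preserves horizontal strips and their directions --- and this argument breaks down exactly for $w=(LR)^m$ and $w=(LLRR)^m$, both of which admit an extra combinatorial $\pi/2$ rotation. Your shape-parameter approach is a different route and might in principle rule out $(LLRR)^m$ geometrically, but you have not indicated how; the paper simply notes that all the exceptional words are arithmetic and hence excluded by hypothesis.
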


Let $\F$ denote a once-punctured torus, and let $\phi:\F\rightarrow \F$ be
an orientation preserving homeomorphism. Let 
\[
M = M_\phi = \F \times_\phi S^1 = \frac{\F\times [0,1]}{(x,0)\sim (\phi(x),1)}
\]
be the mapping torus of $\phi$. 
Identifying $\F$ with $(\R^2 - \Z^2)/\Z^2$, we have that $\phi$ is
isotopic to an element of $\mbox{SL}(2,\Z)$; since $M$ depends only
on the isotopy class of $\phi$ we assume $\phi\in\mbox{SL}(2,\Z)$.
Then $M$ is hyperbolic whenever 
$\phi$ is hyperbolic, i.e. when $\phi$ has distinct real eigenvalues;
$M_\phi$ and $M_{\phi'}$ are homeomorphic if and only if $\phi$ and $\phi'$ are
conjugate. 

Define matrices \[
L = \left(\begin{array}{rr} 1 & 0\\ 1 & 1 \end{array}\right),\hspace{1em}
R = \left(\begin{array}{rr} 1 & 1\\ 0 & 1 \end{array}\right).
\]
For each word $w$ in the symbols $L,R$
define $\phi_w\in\SL(2,\Z)$ as the corresponding matrix product.

\begin{lemma}
Each $\phi\in\SL(2,\Z)$ is conjugate to $\pm \phi_w$ for some
word $w$ in the symbols $L,R$. 
The sign is unique and $w$ is determined up
to cyclic permutations of its letters. 
\end{lemma}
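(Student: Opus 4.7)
The plan is to use the action of $\SL(2,\Z)$ on the Farey tessellation of $\H^2$, equivalently the theory of continued fraction expansions of quadratic irrationals. First observe that the sign in $\pm \phi_w$ is determined by $\mathrm{sign}(\mathrm{tr}(\phi))$: an easy induction shows that any product $\phi_w$ has all non-negative entries with diagonal entries $\geq 1$, and inspecting the traces of $LM$ and $RM$ yields $\mathrm{tr}(\phi_w) \geq 2$ for every word $w$. Elliptic elements of $\SL(2,\Z)$, having $|\mathrm{tr}(\phi)| < 2$, do not arise as monodromies of hyperbolic bundles, so we may restrict to $|\mathrm{tr}(\phi)| \geq 2$ and, after possibly replacing $\phi$ by $-\phi$, assume $\mathrm{tr}(\phi) \geq 2$; the empty word handles $\phi = \pm I$.

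For existence, let $\mathcal{T}$ denote the Farey tessellation of $\H^2$, with vertex set $\Q \cup \{\infty\}$ and fundamental triangle $\Delta_0 = \{0, 1, \infty\}$. When $\mathrm{tr}(\phi) > 2$, $\phi$ acts on $\H^2$ as a hyperbolic isometry with irrational fixed points, so its axis $\gamma$ crosses infinitely many Farey triangles. At each crossing $\gamma$ enters through one edge and exits through one of the two remaining edges; label the step $L$ or $R$ according to a fixed left/right convention. Since $\phi$ translates along $\gamma$, this bi-infinite cutting sequence is periodic, and one period is a word $w$ in the symbols $L, R$. A direct computation, identifying the two "free" edge transitions at $\Delta_0$ (in a chosen framing by an oriented edge) with left multiplication by $L$ and $R$, shows that the product of one period is conjugate in $\SL(2,\Z)$ to $\phi$. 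The parabolic case $\mathrm{tr}(\phi) = 2$ is handled analogously with a horocycle at the parabolic fixed point, identifying $\phi$ with a conjugate of $L^n$ or $R^n$.

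For uniqueness up to cyclic permutation, the only ambiguity in extracting a period from a bi-infinite periodic sequence is the choice of starting point, which cyclically permutes $w$. Conversely, if $\phi_w$ and $\phi_{w'}$ are conjugate by some $g \in \SL(2,\Z)$, then $g$ carries the axis of $\phi_w$ to that of $\phi_{w'}$ while preserving $\mathcal{T}$, so the two cutting sequences agree as cyclic words and $w, w'$ are cyclic permutations of one another.

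The main obstacle in this plan is pinning down the combinatorial correspondence precisely: one must fix an orientation on $\H^2$, a basepoint triangle of $\mathcal{T}$, and an oriented framing edge, then verify by hand that the transition across each of the two free edges of $\Delta_0$ is realized by left multiplication by $L$ and $R$ respectively. Once this setup is in place, the rest is bookkeeping, and the pure periodicity of the cutting sequence for a hyperbolic $\phi \in \SL(2,\Z)$ is equivalent to Lagrange's classical theorem on purely-periodic continued fraction expansions of reduced quadratic irrationals.
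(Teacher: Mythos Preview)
The paper states this lemma without proof, treating it as a standard fact about $\SL(2,\Z)$. Your argument via cutting sequences in the Farey tessellation is the classical proof and is correct for the hyperbolic case, which is all that is used in the paper (only hyperbolic $\phi$ give hyperbolic bundles $M_\phi$).

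You are also right to flag that the lemma is literally false for elliptic elements: since every $\phi_w$ has non-negative entries and trace $\geq 2$, no element with $|\mathrm{tr}\,\phi|<2$ can be conjugate to $\pm\phi_w$. This is an imprecision in the paper's statement, harmless in context. Your treatment of the parabolic and identity cases is adequate but not needed for the application. The self-identified obstacle---fixing an orientation, base triangle, and framing edge so that the two edge-crossings correspond to left multiplication by $L$ and $R$---is real but routine; once those choices are made the verification is a short direct computation, and the uniqueness argument (conjugation carries axis to axis, preserving the direction of translation and the tessellation, hence matching cutting sequences up to a shift) goes through as you describe.
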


Let $M = M_\phi$ where $\phi =\pm \phi_w$ is hyperbolic. 
The so-called {\em monodromy triangulation} $T$ of $M$ has one tetrahedron
for each letter in $w$ and gluings 
determined by $w$ and
the sign. It is nicely described in \cite{FH} and  \cite{GF}. It follows from work of
Lackenby~\cite{La} that $T$ is the canonical ideal cell decomposition
of $M$. Other proofs of this result have recently been given by Gu\'eritaud (\cite{GF}, \cite{Guer}) and Akiyoshi, Sakuma, Wada and Yamashita (see \cite{ASWY}).

The intersection of $T$ with a (small) torus cross
section of the cusp of $M$, lifted to its universal cover $\R^2$, gives the {\em (lifted)
cusp triangulation} $T_0$ of $M$. Note that edges and vertices of
$T_0$ correspond to edges of $T$ seen transversely or end-on respectively. 
SnapPea~\cite{We} provides pictures of these cusp
triangulations: see Figure~\ref{lrrlr} for an example.

We need two things: the first is a combinatorial
description of $T_0$ in terms of $w$; the second is an understanding
of which edges and vertices of $T_0$ correspond to the same edges of
$T$ in $M$. Both are outlined briefly here:
for detailed explanations we refer the reader to the Appendix of \cite{FH}
and Sections 3 and 4 of \cite{GF}.

\subsection{The monodromy triangulation $T$}
The triangulation $T$ is built
up in layers by gluing tetrahedra according to the letters of
$w$. We begin with an almost flat ideal tetrahedron projecting onto a punctured torus;
the tetrahedron has edges $a,b,c,c_-$ identified as shown below.

\begin{center}
\includegraphics[scale=0.8]{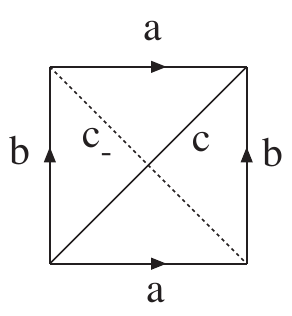}
\end{center}

For each successive letter $L$ or $R$ in the word $w$, 
we attach a tetrahedron to the top of the previous tetrahedron, as shown
below in the cover $(\R^2 - \Z^2) \times \R$.

\begin{center}
\includegraphics[scale=0.8]{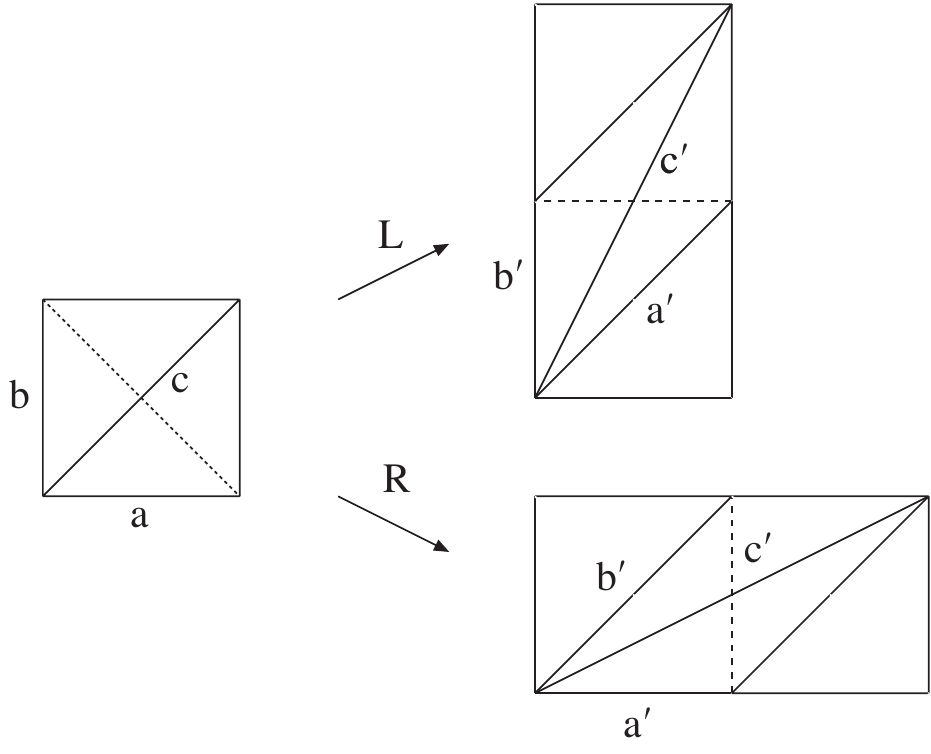}
\end{center}

After using all the letters of $w$, the final triangulation of the fibre $F$
differs from the initial triangulation by the monodromy  $\phi$,
and we can glue the top and bottom together to 
obtain an ideal triangulation $T$ of $M$.

\subsection{Combinatorial description of $T_0$.}
Now consider the induced triangulation of a cusp linking torus (i.e. cusp cross section) in $M$.
Each tetrahedron contributes a chain of 4 triangles 
going once around the cusp as shown in Figure \ref{cusp_cycle}.

\begin{figure}[h]
\begin{center}
\includegraphics[scale=0.8]{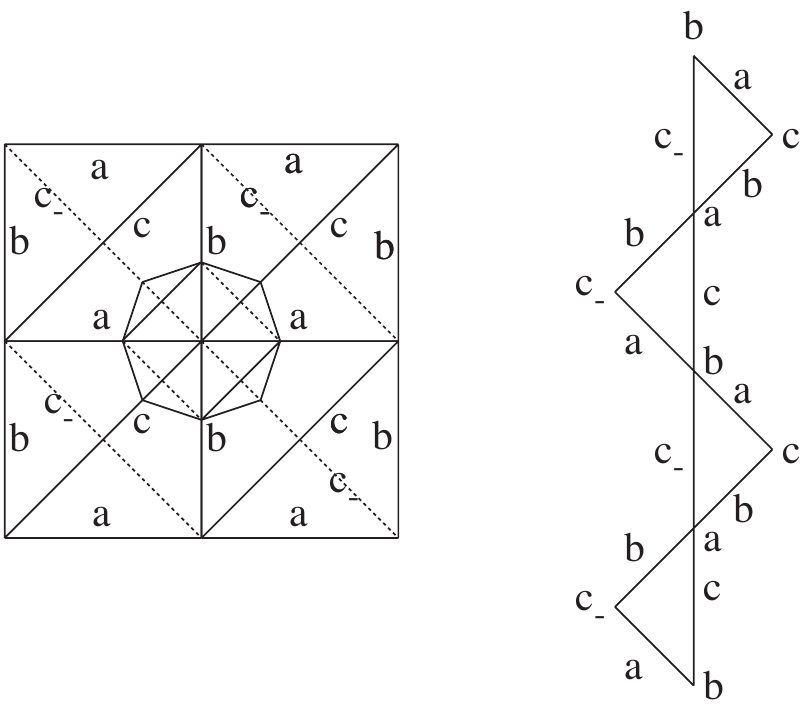}
\end{center}
\caption{The chain of triangles around a cusp coming from one tetrahedron.}
\label{cusp_cycle}
\end{figure}

In the triangulation $T_0$ of 
$\R^2$ this lifts to an infinite chain of triangles forming
a (vertical) saw-tooth pattern.

Each chain is glued to the next in one of
two ways depending on whether the 
letter is an $L$ or an $R$. 
\[\epsfig{file=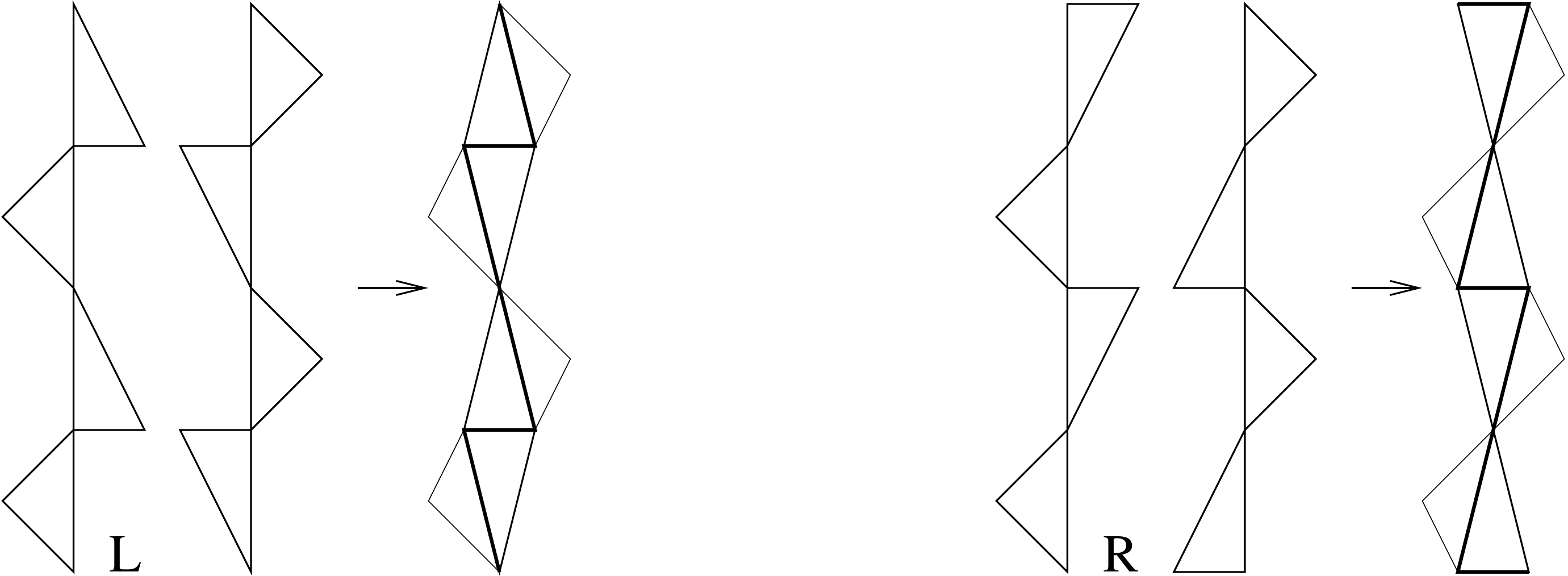,height=3.5cm} \]
Before gluing we adjust slightly the ``front'' triangles of the
first chain and the ``back'' triangles of the second, so as to create two
horizontal edges. 
After stacking these chains of triangles together we have a decomposition
of $T_0$ into  horizontal strips which can be described combinatorially
as follows.

Make a horizontal strip out of triangles corresponding to the
letters of $w$: for each $L$ add a triangle with a side on the bottom
edge of the strip and a vertex at the top; vice-versa for $R$; repeat
infinitely in both directions. For example, for $LRRLR$ we have: 
\[
\epsfig{file=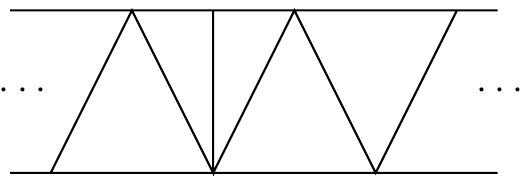}
\]
Fill the plane by reflecting repeatedly in the top and bottom 
edges of the strip. Then the cusp linking torus is the
quotient of the plane by the group $\G_0$ generated by 
 vertical translation by {\em four} strips, 
and horizontal translation by one period of the strip (composed with an extra vertical translation
 by  two strips if $\phi=-\phi_w$). 

\begin{figure}[h]
\epsfig{file=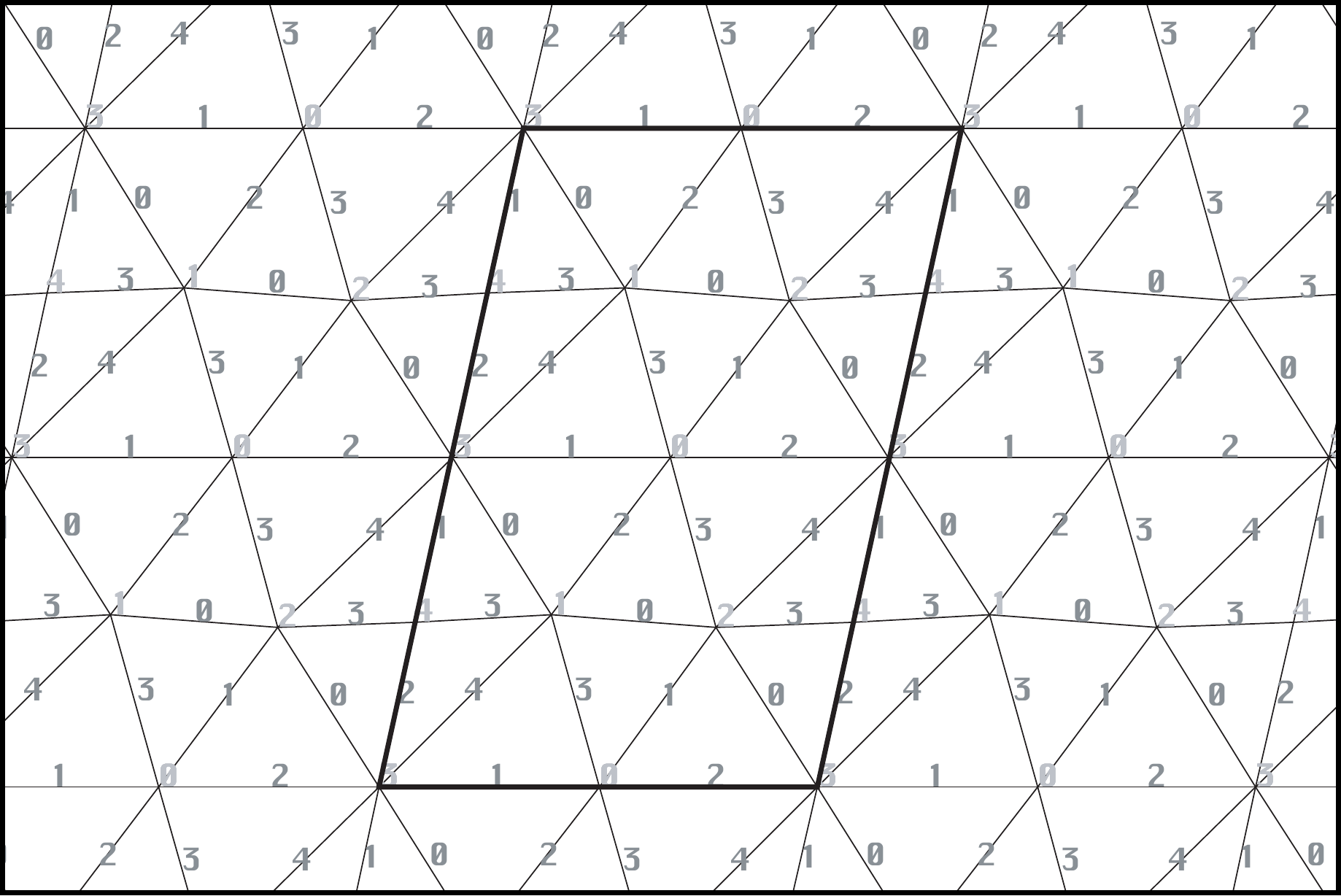,width=12cm}
\caption{The cusp triangulation of $M = M_{\phi_w}$ where $w = LRRLR$. The
triangulation of $M$ has five edges, labelled 0-4, which appear as both
edges and vertices of the cusp triangulation.}
\label{lrrlr}
\end{figure}

\subsection{Edge/vertex correspondence in $T_0$.}
To understand which edges of $T_0$ are identified in $T$,
we define a ``direction'' on each horizontal strip:
right to left on the first strip, left to right on the strips below
and above it, and so on, so that adjacent strips have opposite
directions. Then we have:

\begin{lemma}\label{edge_labels}
Each edge $e$ of $T_0$ which crosses a strip bounds two triangles, 
one of which, $\delta$ say, lies on the side of $e$ given by the direction on the
strip. Then $e$ and the opposite vertex of $\delta$ give the same
edge of $T$. Horizontal edges of $T_0$
correspond with the edges at the opposite
vertices of both adjacent triangles. 
\end{lemma}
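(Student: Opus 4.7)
The plan is to verify the edge/vertex correspondence by inspecting a single tetrahedron of $T$ at a time and propagating the labels through the layering. Fix a tetrahedron $\Delta$ of $T$. All four ideal vertices of $\Delta$ lie at the single cusp, so each contributes a horospherical cross-section triangle to $T_0$. For each ideal vertex $u$ of $\Delta$, I would label the three vertices of its cross-section triangle by the three edges of $T$ meeting $u$ (those edges seen end-on), and label the three sides by the three faces of $\Delta$ at $u$, each side carrying the name of the edge of $T$ opposite $u$ on the corresponding face (seen transversely). Once the gluings across shared faces are taken into account, this equips every vertex and every edge of $T_0$ with a canonical label by an edge of $T$.

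Next I would track the apex edges introduced by the layering. When the letter of $w$ corresponding to $\Delta$ is read, $\Delta$ is attached along its bottom face with its apex vertex $v$ becoming the top vertex of the new layer. The apex edge of $\Delta$ is incident to $\Delta$ only through $v$, so it appears end-on exactly at the cross-section triangle at $v$ and transversely on the three faces of $\Delta$ containing $v$. In the strip structure of $T_0$ this means the four triangles of $\Delta$ form a vertical sawtooth in which the apex-labeled vertex occurs once, and the three sides of $T_0$ carrying the same apex-edge label are precisely the strip-crossing edges adjacent to that apex vertex. The direction convention on each strip is set up so that, as one travels along the direction, the apex of the tetrahedron responsible for the chain on that side lies ahead; the alternation of directions between adjacent strips matches the alternation of the apex side under the $L/R$ gluing rule. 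Granting this, if $e$ is a strip-crossing edge of $T_0$ and $\delta$ is the triangle on the direction side, the vertex of $\delta$ opposite $e$ is the apex vertex of the corresponding $\Delta$, labeled by the same apex edge of $T$ that labels $e$, which is the first part of the lemma.

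For the horizontal edges I would use the slight adjustment of the front and back triangles of consecutive chains described just before the lemma. Each horizontal edge sits at the interface between two strips on a face shared by consecutive tetrahedra $\Delta_i$ and $\Delta_{i+1}$, and its label is the common edge of $T$ that is simultaneously the apex edge of $\Delta_i$ from one side and of $\Delta_{i+1}$ from the other; this edge is opposite the relevant vertex in each of the two adjacent cross-section triangles, giving the second part. The main obstacle throughout is combinatorial rather than conceptual: one must pin down a consistent convention for the apex vertex of each $\Delta$, for the direction on each strip, and for the sign in $\phi = \pm \phi_w$, and then propagate these labels carefully through the $L$ and $R$ moves. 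I would do the $L$ case in detail following the conventions of \cite{FH} and \cite{GF}, and obtain the $R$ case by symmetry.
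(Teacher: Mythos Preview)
Your argument rests on a misreading of the layered structure of $T$. A tetrahedron $\Delta$ in the monodromy triangulation is not attached along a single bottom face with a single apex vertex opposite it: it is sandwiched between two once-punctured-torus fibres, glued along its two bottom faces to the layer below and along its two top faces to the layer above. The distinguished data are a \emph{top edge} and a \emph{bottom edge} of $\Delta$ (an opposite pair of edges, the new and old diagonals of the edge-flip), not an apex vertex. Consequently every edge of $\Delta$ has two ideal endpoints at the cusp, so no edge is ``incident to $\Delta$ only through $v$'', and an edge through $v$ lies on exactly two of the three faces through $v$, not all three. These assertions are precisely what your identification of the direction-side vertex with an ``apex'' depends on, so the main step does not go through as written.

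The paper argues instead via the fibre surfaces between consecutive tetrahedra. Each fibre is an ideally triangulated once-punctured torus with three edges $a,b,c$; its trace on the cusp torus is a vertical $6$-cycle in $T_0$ whose edges carry the labels $a,b,c,a,b,c$ and whose vertices carry the complementary label (the vertex between an $a$-edge and a $b$-edge is labelled $c$, and so on). These cycles meet each horizontal strip in an $L$- or $\Gamma$-shaped pair of segments, and it is this handedness that \emph{defines} the direction on the strip. The lemma then follows by reading off the labels at the ends of the down-stroke and the horizontal stroke of each $L$ or $\Gamma$. If you want to salvage a tetrahedron-by-tetrahedron approach, the right object to track is the top edge of $\Delta$ (seen end-on at two of the four cross-section triangles and transversely in the other two), but you will still need an argument tying this to the strip direction; the fibre-cycle picture supplies that link directly.
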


This result is illustrated in Figure \ref{edge_idents} and will be 
important in the arguments below. 

\begin{figure}[h]
\begin{center}
\includegraphics[scale=0.8]{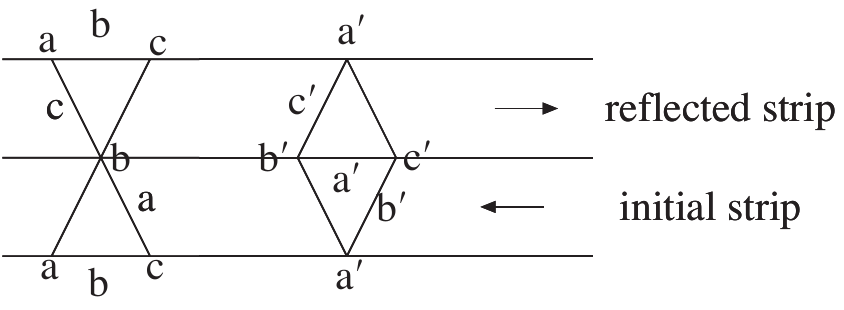}
\end{center}
\caption{Edge identifications in strips of $T_0$.}
\label{edge_idents}
\end{figure}

\begin{proof}
The boundary between two tetrahedra in $T$ is a punctured torus
consisting of two ideal triangles, homotopic to a fibre of $M$. In $T_0$ the
boundaries give {\em edge cycles,} lifts of a cycle of 6 edges
going once around the cusp in the vertical direction as in Figure \ref{cusp_cycle}. 
Here are two cycles: 
\[
\epsfig{file=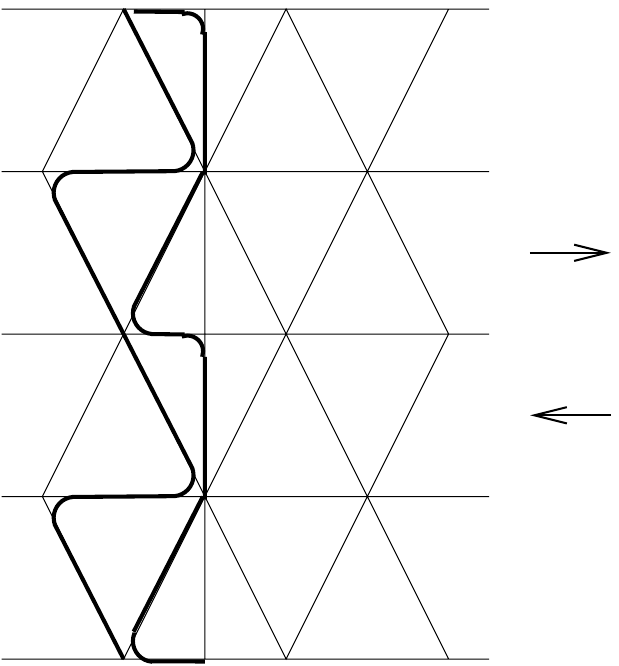,height=4cm}
\]
Cycles meeting a given strip form either ``$L$'s and $\Gamma$'s''
or ``backwards $L$'s and $\Gamma$'s.'' This gives the directions:
$L$'s and $\Gamma$'s read left to right!

We see from Figure \ref{cusp_cycle} that the
edges of a cycle are homotopic to 
three edges of $T$, $a,b,c$ say, cyclically
repeated. The vertices are the same three edges of $T$ arranged 
so that if an edge is $a$, its vertices are $b$ and $c$ and so on
(as in Figure \ref{edge_idents}).
Thus for any (forwards or backwards) $L$ or $\Gamma$:
the edge at the down-stroke (a strip-crossing edge) equals the edge seen at the other end of its horizontal stroke. 
Further, the edge seen at any horizontal stroke equals the edge at
the other end of the down-stroke. This proves the lemma.
\end{proof}

\subsection{Reduction to $T_0$.}
Let $\G$ be the image of the holonomy representation of $\pi_1(M)$ in
$\mbox{\rm Isom}(\H^3)$ so that $M = \H^3/\G$ as usual. 
Let $x$ be a fixed point of some maximal parabolic subgroup of
$\G$ which we identify with the group of translations $\G_0$ defined above. 
We regard symmetries and hidden symmetries as isometries of $\H^3$. 
Since $M$ is 1-cusped we can compose any symmetry or hidden symmetry
with an element of $\G$ to obtain an isometry which fixes $x$. 
Thus if $M$ has a symmetry, it is represented by a symmetry
of $T_0$; if it has a non-trivial hidden symmetry, it is represented 
by a symmetry of $T_0$ which does not come from a symmetry of $M$. 

Since our combinatorial picture of $T_0$ is not metrically exact, we only know that symmetries and
hidden symmetries act as simplicial homeomorphisms (S.H.) of $T_0$. 

\begin{lemma}\label{preserve_strips}
Simplicial homeomorphisms of $T_0$ preserve horizontal strips
whenever $w\neq (LR)^m$ or $(LLRR)^m$ as a cyclic word, for any $m>0$. 
\end{lemma}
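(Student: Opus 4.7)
The plan is to reduce the lemma to a statement about reflections in the simplicial automorphism group $\mathrm{Aut}(T_0)$. By construction, the reflection across any horizontal strip boundary lies in $\mathrm{Aut}(T_0)$ (adjacent strips are mirror images), so every horizontal strip boundary is a reflection axis. Given a simplicial homeomorphism $\psi$ of $T_0$, the conjugate $\psi R_h \psi^{-1}$ of a horizontal reflection $R_h$ is again a simplicial reflection whose axis is $\psi(\mathrm{axis}(R_h))$. So it suffices to show that for $w \neq (LR)^m, (LLRR)^m$ the only reflection axes of $T_0$ are horizontal (or, when $w$ is cyclically palindromic, vertical), both of which preserve the horizontal direction; then $\psi$ must send each horizontal line to a horizontal line, hence preserve strips.

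First I would record the combinatorial data of $T_0$. Writing $w$ as an alternation of $L$-runs of lengths $k_i$ and $R$-runs of lengths $m_j$, each $L$-run of length $k$ produces an apex vertex of degree $2(k+2)$ on the top of every strip, together with $k-1$ internal vertices of degree $4$ on the bottom; similarly for $R$-runs with top and bottom swapped. Every vertex of $T_0$ lies on a strip boundary, and its two horizontal edges separate its star into an ``above'' half and a ``below'' half which are identical under the strip-boundary reflection. Now suppose $R \in \mathrm{Aut}(T_0)$ is a reflection whose axis $\ell$ is tilted away from both horizontal and vertical. Composing $R$ with a horizontal reflection yields an orientation-preserving element of $\mathrm{Aut}(T_0)$, of finite order, acting as a topological rotation centred at the intersection of the two axes. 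Its fixed point must be either the barycentre of a triangle (forcing order $3$) or a vertex of degree divisible by the order.

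The barycentre case is ruled out immediately: in the strip triangulation each triangle has exactly one horizontal edge (on a strip boundary) and two crossing edges (going from top to bottom of the strip), and these are combinatorially distinguishable using the degree data above -- for example, an internal-run vertex is an endpoint of a horizontal edge whose other endpoint lies on the same line of degree-$4$ or apex-type vertices, whereas its crossing-edge endpoints lie on the mirror-image line; so an order-$3$ cycle of the three edges of a triangle is impossible. For the vertex case, a rotation of order $n \in \{3, 4, 6\}$ at a vertex $v$ of degree $d$ requires the cyclic list of neighbour-degrees around $v$ to be $(d/n)$-periodic. Writing this list explicitly at each type of vertex (using the run-length data), the main obstacle is to carry out a careful case analysis showing that such periodicity forces all $L$-runs of $w$ to have common length $k$, all $R$-runs common length $m$, and moreover $(k,m) = (1,1)$ or $(k,m) = (2,2)$ -- corresponding exactly to $w = (LR)^m$ (with $n = 6$) and $w = (LLRR)^m$ (with $n = 4$). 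The delicate point is that matching the cyclic neighbour-degree pattern through $n$ positions pins down the run-length structure tightly: one must track not only degrees but also the apex/internal alternation around $v$ to ensure that a coincidence of degree multisets genuinely extends to a global simplicial rotation. Once this analysis is done, no tilted reflection can exist for other $w$, and $\psi$ preserves the horizontal strip structure.
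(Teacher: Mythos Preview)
Your reflection/rotation approach is genuinely different from the paper's, but it contains a logical gap at the key step. You assert that once all simplicial reflection axes are shown to be horizontal or (in the palindromic case) vertical, it follows that $\psi$ sends each horizontal line to a horizontal line. This does not follow: the statement ``both of which preserve the horizontal direction'' is a property of the \emph{reflections} $R_h$ and $R_v$, not of $\psi$. If $\psi(h)$ is a vertical axis then $\psi R_h\psi^{-1}$ is indeed a vertical reflection (which does preserve horizontal lines), but $\psi$ itself has carried a horizontal line to a vertical one and so does \emph{not} preserve horizontal strips. Your rotation analysis, restricted to orders $3,4,6$, never touches this case, since a vertical axis meeting a horizontal axis produces a rotation of order~$2$. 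What is actually needed is to rule out $\psi(h)$ being a non-horizontal \emph{straight edge-path} in $T_0$ (since $h$ is one and $\psi$ is simplicial), and this is precisely what the paper proves directly.

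The paper's argument is both shorter and more direct: it defines a ``strip'' intrinsically as the region between two disjoint straight edge-paths in which every interior edge crosses, observes that an edge bounding a strip must be opposite a vertex of order $\ge 6$ in the crossing triangle, and then reads off from the local pictures around vertices of order $\ge 10$, $8$, and $6$ that a non-horizontal strip edge forces $w=(LLRR)^m$ or $(LR)^m$. Your rotation-at-a-vertex case analysis, even if completed, would establish only a weaker statement (no reflection axis tilted at an angle $\ne\pi/2$ to the horizontal), and closing the remaining $\pi/2$ case would still require an argument essentially equivalent to the paper's.
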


Since vertex orders in $T_0$ are all even we can define a {\em
straight line} to be a path in the 1-skeleton which enters and leaves
each vertex along opposite edges. A {\em strip} consists of a part of
$T_0$ between two (infinite, disjoint) straight lines such that every
interior edge crosses from one side of the strip to the other. 

\begin{proof}
If an edge of $T_0$ lies on the edge of a strip, not necessarily
horizontal, the opposite vertex of the triangle which crosses that
strip must have order at least $6$: the straight line going through
this vertex has at least the two edges of the triangle on one side.

If there exists 
an S.H. which is not horizontal strip preserving, every vertex of $T_0$ 
will lie on a non-horizontal edge which is an edge of some strip
(namely the image of a previously horizontal strip edge). 

Suppose there is a vertex of order 10 or more, corresponding to 3 or
more adjacent $L$'s or $R$'s in $w$:
\[
\epsfig{file=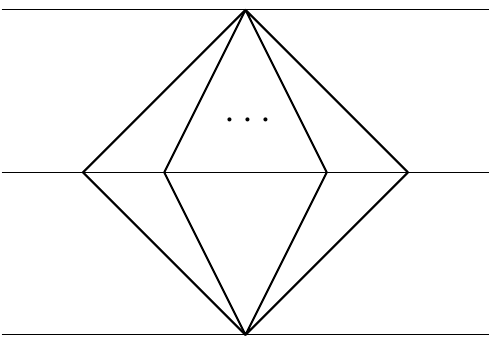, height=1.8cm}
\]
Here none of the non-horizontal edges 
shown can lie on a strip edge
because they are all opposite vertices of order 4. So in this case all
S.H.'s must preserve horizontal strips. 

Suppose there is a vertex of order 8, corresponding to $LL$ or $RR$ in
$w$: \[
\epsfig{file=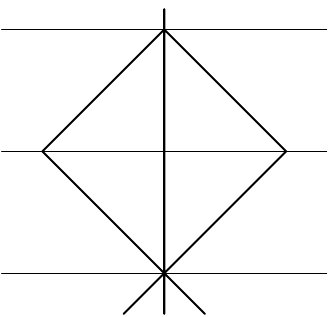, height=2cm}
\]
Now the only non-horizontal edges shown which can be strip edges are
the vertical ones. Vertex orders on this vertical edge alternate
$4,8,4,8$. So if an S.H. maps a horizontal strip edge to this vertical
one, vertex orders along some horizontal edge must be
$4,8,4,8,\ldots$. This determines $T_0$ and hence $w$ as $(LLRR)^m$. 
(The reader can verify that this particular $T_0$ admits a $\pi/2$
rotation.) 

Finally suppose there is no vertex of order $>6$. 
It follows
immediately that $w$ is $(LR)^m$ and $T_0$ is the tiling of the plane by
equilateral triangles. (Again, this admits S.H.'s which are not
horizontal strip preserving.) 
\end{proof}

\begin{lemma}\label{preserve_dir}
Simplicial homeomorphisms of $T_0$ coming from symmetries of $T$
preserve the horizontal strip directions whenever $w\neq (LR)^m$ or
$(LLRR)^m$. 
\end{lemma}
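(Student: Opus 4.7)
The plan is to assume for contradiction that $\sigma$ is a symmetry of $T$ whose induced simplicial homeomorphism $\tilde\sigma$ of $T_0$ reverses the direction of some horizontal strip, and then to derive combinatorial constraints from Lemma \ref{edge_labels} that force $w$ into one of the exceptional cyclic forms. The essential point is that $\tilde\sigma$ must respect the labelling $\ell$ of edges and vertices of $T_0$ by edges of $T$, that is $\ell\circ\tilde\sigma=\sigma\circ\ell$, whereas the direction of each strip is rigidly encoded by the pairings given in Lemma \ref{edge_labels}.

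First I would combine these two constraints. By Lemma \ref{preserve_strips}, $\tilde\sigma$ maps $S$ to another strip $S'=\tilde\sigma(S)$. Lemma \ref{edge_labels} applied in $S$ gives, for each strip-crossing edge $e$, $\ell(e)=\ell(v)$ where $v$ is the opposite vertex of the downstream triangle; pushing this forward, $\ell(\tilde\sigma(v))=\ell(\tilde\sigma(e))$. Since $\tilde\sigma$ reverses the direction, the vertex $\tilde\sigma(v)$ now lies on the \emph{upstream} side of $\tilde\sigma(e)$ in $S'$. Combining with Lemma \ref{edge_labels} applied intrinsically in $S'$ yields that each strip-crossing edge of $S'$ must share its edge-label with the opposite vertices on \emph{both} adjacent sides.

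Next I would propagate this two-sided labelling through the strip via a case analysis on the type of consecutive triangle pairs. When two consecutive triangles are of the same type (both $L$ or both $R$), the shared non-apex vertex between them is simultaneously the opposite vertex for two flanking strip-crossing edges, forcing $\ell(e_{i-1}')=\ell(e_{i+1}')$. When they are of opposite type ($LR$ or $RL$), the apex vertex is shared, and the horizontal-edge rule in Lemma \ref{edge_labels} instead yields $\ell(e_{i-1}')=\ell(h_i')$, transferring the constraint through a horizontal edge into the adjacent strip. Iterating these identifications across one period of $w$ and using the $\Gamma_0$-periodicity in the vertical direction to close the argument yields a cascade of label identifications among the (finitely many) edges of $T$.

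The final step is to check when this cascade collapses distinct edges of $T$. The constraints force the maximal $L$-blocks and $R$-blocks of $w$ to have a uniform length and the block alternation to be periodic with the minimal period, reducing the possibilities to $w=(LR)^m$ (blocks of length $1$) and $w=(LLRR)^m$ (blocks of length $2$). The main obstacle will be the combinatorial bookkeeping needed to track identifications as they pass between strips through horizontal edges, particularly at $L$-to-$R$ transitions where both rules of Lemma \ref{edge_labels} interact and where the cross-strip identifications must be checked against $\Gamma_0$-periodicity before they can be seen to rule out words such as $LLR$ or $LLLRRR$.
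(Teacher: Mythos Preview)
Your opening move is sound: from $\ell\circ\tilde\sigma=\sigma\circ\ell$ together with Lemma~\ref{edge_labels} you correctly deduce that, after a direction-reversing $\tilde\sigma$, every strip-crossing edge in the image strip has the \emph{same} $\ell$-label as the opposite vertices on \emph{both} sides. Since $\tilde\sigma$ permutes the strips and adjacent strips carry opposite directions, this two-sided matching in fact holds in every strip. So far so good.

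The gap is everything after that. Your propagation scheme (``same type'' gives $\ell(e'_{i-1})=\ell(e'_{i+1})$, ``opposite type'' feeds through a horizontal edge, iterate and close up modulo $\Gamma_0$) is only a sketch, and the crucial reduction---that the resulting identifications force all maximal $L$- and $R$-blocks to have a common length which must be $1$ or $2$---is asserted, not proved. You yourself flag the bookkeeping across $L\!\leftrightarrow\!R$ transitions and across strips as the ``main obstacle'', and indeed nothing in your outline visibly excludes words like $L^3R^3$. Tracking the full label map $\ell$ drags along the unknown permutation $\sigma$ of the edges of $T$, which is why the combinatorics never settles.

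The missing idea, and the one the paper uses, is to pass from $\ell$ to the numerical invariant $\mathrm{ord}\bigl(\ell(\,\cdot\,)\bigr)$, the order of the corresponding edge of $T$. For a vertex $v$ of $T_0$ this is simply the valence of $v$ in $T_0$; for a strip-crossing edge $e$ it is, via Lemma~\ref{edge_labels}, the valence of the downstream opposite vertex. A symmetry $\sigma$ of $T$ preserves edge orders, so $\tilde\sigma$ must preserve these integer edge-labels on $T_0$, and now $\sigma$ has disappeared from the problem. Your two-sided matching then says exactly that the two opposite vertices of every strip-crossing edge have equal valence. From here the argument is short: a run of $k\ge 3$ equal letters produces a vertex of valence $\ge 10$ adjacent (across a strip-crossing edge) to a valence-$4$ vertex, an immediate contradiction; if the maximal run has length $2$, the equal-valence condition around each valence-$8$ ``diamond'' forces an adjacent valence-$8$ vertex, and iterating yields $w=(LLRR)^m$; if all runs have length $1$ then $w=(LR)^m$. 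Replacing your label-propagation by this order argument closes the gap cleanly.
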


\begin{proof}
An S.H. of $T_0$ which comes from a
symmetry of $T$ obviously preserves the order of every edge of $T$. 
Thus if we label each edge of $T_0$ with the order of the
corresponding edge of $T$, the labels must be preserved. 

The order of an edge of $T$ corresponding to a vertex of $T_0$ is
simply its order as a vertex of $T_0$. By Lemma \ref{edge_labels} 
we can use the strip directions to
label the corresponding edges of $T_0$. Given a
vertex of order 10 or more we have: 
\[
\epsfig{file=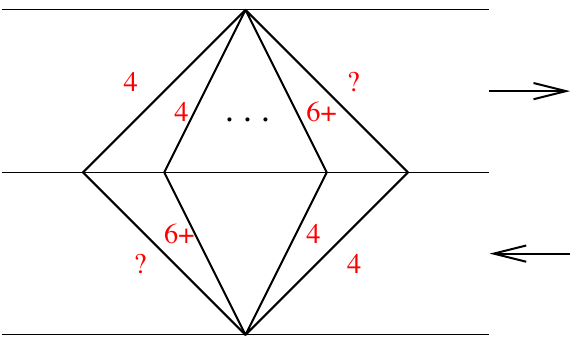, height=2.2cm}
\]
The direction on a horizontal strip cannot be reversed because this would swap an
edge of order 4 with one of order 6 or more. 

Suppose now the maximum vertex order is 8. For each order 8 vertex we
have: \[
\epsfig{file=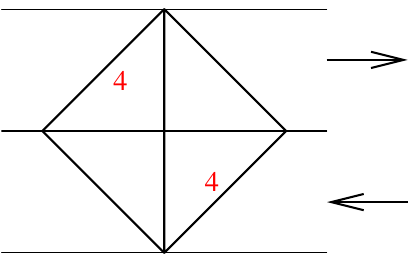,height=2cm}
\]
If there is an S.H. which reverses strip direction then the other two
sides of this diamond figure, wherever it appears, must also be
labelled with 4's.
\[
\epsfig{file=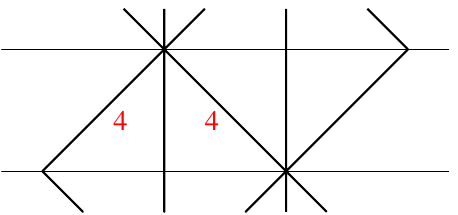,height=2cm}
\]
The 4 on the upper right edge implies that the right hand vertex of
the diamond has order $\geq 8$, hence $8$. This gives another diamond
figure adjacent to the first one. The argument can be repeated giving
the pattern associated with $w = (LLRR)^m$. 

If there are no vertices of order $>6$, $w = (LR)^m$. 
\end{proof}

\begin{proof}[Proof of Theorem \ref{nohs}]
It is shown in \cite{BMR} that the only arithmetic orientable hyperbolic
punctured torus bundles are those for which $w = LR, LLR$ (or $LRR$)
or $LLRR$, or powers of these, having invariant trace fields $\Q(\sqrt{-3}),
\Q(\sqrt{-7})$ and $\Q(\sqrt{-1})$ respectively. Thus, in the 
non-arithmetic cases, Lemmas \ref{preserve_strips} and \ref{preserve_dir}
show that all symmetries of $T_0$ which come from hidden symmetries of
$M$ are represented by simplicial homeomorphism which 
preserve the strips and strip-directions. 

Note that there are
two ``sister'' manifolds for each $w$ depending on whether the
monodromy is $\phi_w$ or $-\phi_w$; the triangulations of $\H^3$
however are the same, depending only on $w$. 

It is now easy to see that  the possible symmetries of $T_0$, 
modulo the translations in $G_0$, 
are restricted to the types listed below. We show that each,
if it occurs at all, comes from an actual symmetry of $M$.

\begin{enumerate}
\item Shifting up or down by two strips. This is realized by the
  symmetry ${\left(\begin{array}{rr}-1&0\\0&-1\end{array}\right)}\times
  1$ from $\F\times_{\pm\phi_w} S^1$ to itself. (Note that
  ${\left(\begin{array}{rr}-1&0\\0&-1\end{array}\right)}$ is central in
  $\SL(2,\Z)$.) \[
  \mbox{
  \begin{picture}(0,0)(0,0)
    \put(43,96){$\phi_w$} 
    \put(107,65){$\scalebox{.5}{$\minusmat $}\times 1$}
    \put(223,96){$\phi_w$} 
  \end{picture}\epsfig{file=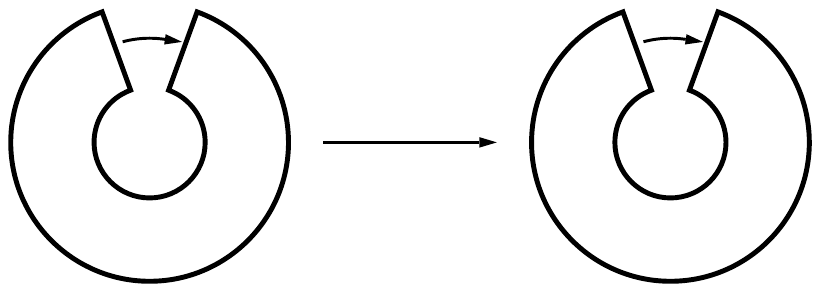,height=2.65cm}}
  \]

\item If $w$ is a power, $w = u^m$, then $T_0$ admits a horizontal
  translation by $\ell (u)$ triangles, where $\ell (u)$ is the length of the word $u$. 
  In this case 
\[
\begin{picture}(0,0)(0,0)
\put(42,20){$\phi_u$}
\put(100,54){$\phi_u$}
\put(157,20){$\phi_u$}
\end{picture}
M = \raisebox{-24pt}{\epsfig{file=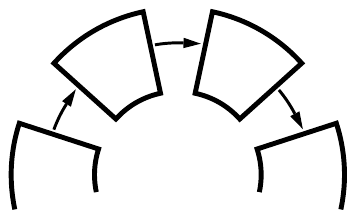,scale=0.95}} \]
and $1\times r_{2\pi/m}$ gives a symmetry of order $m$ (where
$r_\theta$ denotes rotation of the unit circle $S^1$ by $\theta$). For the $-\phi_w$
case we replace one of the $\phi_u$'s by $-\phi_u$. After rotating
we also have to apply $-1\times 1$ to $\F\times [0,1/m]$. 

\item If $w$ is {\em palindromic}, i.e. $w$ and its reverse $w'$ are
  the same as cyclic words, then $T_0$ admits a rotation by $\pi$
  about a point on one of the strip edges. This is realized by 
  $\pmmat \times \mbox{\em refl}(S^1)$, where $\mbox{\em refl}(S^1)$ 
denotes a reflection of $S^1$. \[
  \mbox{\begin{picture}(0,0)(0,0)
      \put(48,96){$\phi_w$}
      \put(110,60){$1\times\mbox{\em refl}(S^1)$}
      \put(227,96){$\phi_w^{-1}$}
      \put(295,65){$\scalebox{.5}{$\pmmat $}\times 1$}
      \put(408,96){$\phi_{w'}$}
    \end{picture}
  }\epsfig{file=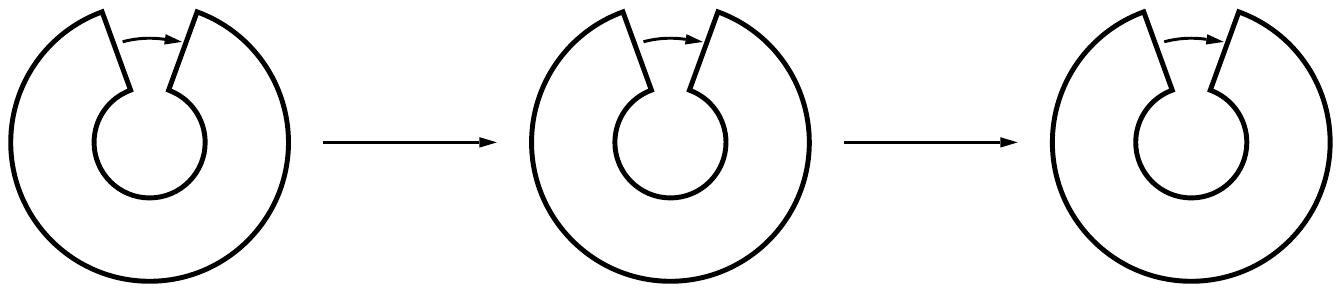,height=2.67cm} \]
  Note that conjugation by $\pmmat$ takes $L$ to $L^{-1}$ and $R$ to $R^{-1}$.

\item If $\ell(w)$ is even and rotating $w$ by a half turn swaps
  $L$'s and $R$'s, $T_0$ has a glide reflection mapping a strip to
  itself, exchanging the top and bottom of the strip. 
  This is realized by $\rflmat\times 1$ since conjugation by this
  matrix swaps $L$ and $R$. More explicitly, let $w = uv$ where $v$ is
  $u$ with $L$'s and $R$'s  interchanged.
  The symmetry is \[
   \mbox{\begin{picture}(0,0)(0,0)
      \put(62,96){$\phi_u$}
      \put(62,10){$\phi_v$}
      \put(150,68){$\scalebox{.5}{$\rflmat $}\times 1$}
      \put(282,96){$\phi_v$}
      \put(282,10){$\phi_u$}
    \end{picture}
  }\epsfig{file=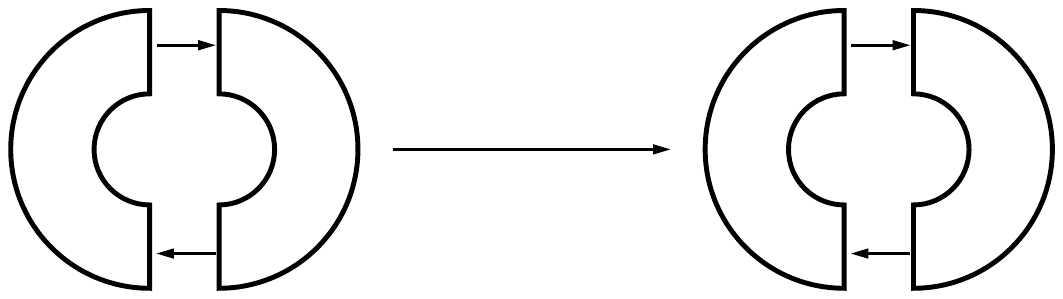,height=2.58cm} \]
  followed 
  by $r_{\pi}$ on the $S^1$ factor. This symmetry
  is orientation reversing. 

\item If $\ell(w)$ is even and reversing $w$ swaps $L$'s and $R$'s 
  (we might say that $w$ is {\em anti-palindromic}) then $T_0$
  admits a glide reflection with vertical axis, shifting everything up by one
  strip. This is realized by $\trnmat \times \mbox{\em refl}(S^1)$ 
  since conjugation by $\trnmat$ takes $L$ to $R^{-1}$ and $R$ to
  $L^{-1}$. This symmetry is orientation reversing. We leave the
  picture as an exercise for the reader. 
\end{enumerate}

\end{proof}

Clearly if $M$ admits any two of the symmetries 3--5 it admits the
third which is a product of the other two. Thus if $w$ is not a power,
$M$ may have one of 5 possible symmetry groups.

\section{Algorithm for finding isometries of tilings} \label{tilingisom}

To apply our commensurability criterion, Theorem \ref{comm_criterion}, we
need a way to determine when tilings of 
$\H^n$, arising from ideal cell decompositions, are isometric. 
We pursue this question in a slightly more general setting. 

Let $M$ and $M'$ be geometric $n$-manifolds with universal cover $X^n
= \E^n$ or $\H^n$, each with a given finite decomposition into convex
polyhedra. 
We wish to determine whether the tilings $T$ and $T'$ which
cover these two cell decompositions are isometric. (The elements of
$T$ (resp.\ $T'$) are convex polyhedra in $X^n$ which project to
polyhedra in the decomposition of $M$ (resp.\ $M'$).)

Necessary and sufficient conditions for the tilings to be isometric are as
follows.
\begin{enumerate}
\item There is an isometry $i$ of $X^n$ which maps an element of $T$
isometrically onto an element of $T'$.
\item
Whenever $i$ maps $P\in T$
isometrically onto $P'\in T'$, and $F$ is a (codimension $1$)
face of $P$, $i$ maps the neighbour of $P$ at $F$
isometrically onto the neighbour of $P'$ at $i(F)$.
\end{enumerate}
Sufficiency follows from the fact that we can proceed from any tile in
$T$ to any other by a finite sequence of steps between neighbouring
tiles.

Let $S$ and $S'$ denote the polyhedra decomposing $M$ and $M'$
respectively.

Let $\Theta$ denote the set of all triples $(j,p,p')$, where $p\in S$,
$p'\in S'$, and $j$ is an isometry carrying $p$ onto $p'$. We can find
$\Theta$ in a finite number of steps. Condition~1 above is
equivalent to $\Theta$ being non-empty.

We say that $(j,p,p')\in \Theta$ is {\em induced by} an isometry
$i$ of $X^n$ if there exist $P\in T$ projecting to $p$ and $P'\in T'$
projecting to $p'$ such that $i$ carries $P$ isometrically onto $P'$
and the restriction induces $j$. Let $f$ be a face of $p$ and let
$q$ and $q'$ be the neighbours of $p$ and $p'$ at $f$ and $j(f)$
respectively. The restriction of $j$ to $f$ induces an isometry
between certain faces of $q$ and $q'$. If this extends to
$(j_q,q,q')$ we say that $(j,p,p')$ {\em extends across $f$ to}
$(j_q,q,q')$. If $(j,p,p')$ is induced by $i$ then
$(j_q,q,q')$ will be also.

Condition~2 is equivalent to the following:
whenever $(j,p,p')$ is induced by $i$, and $f$ is a face of $p$, then
$(j,p,p')$ extends across $f$.

\begin{theorem}
With the above notation, the tilings $T$ and $T'$ are isometric if and
only if there exists a non-empty subset $I$ of $\Theta$ such that
every element of $I$ extends across each of its faces to yield another
element of $I$.
\end{theorem}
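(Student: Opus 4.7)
The plan is to prove both implications of the biconditional separately, with the reverse direction carrying essentially all of the content.

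For the forward direction, suppose $i$ is an isometry of $X^n$ with $i(T)=T'$. Let $I\subseteq\Theta$ be the set of all triples $(j,p,p')$ induced by $i$. Non-emptiness is immediate from any $P\in T$ together with $i(P)\in T'$. Given $(j,p,p')\in I$ realized by lifts $P,P'$ and a face $f$ of $p$ lifting to a face $F$ of $P$, the neighbour of $P$ across $F$ lies in $T$ and is carried by $i$ onto the neighbour of $P'$ across $i(F)$; the induced triple lies in $I$ and is, by construction, the extension of $(j,p,p')$ across $f$. So $I$ has the required closure property.

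For the reverse direction, pick any $(j_0,p_0,p_0')\in I$ and any lift $P_0\in T$ of $p_0$. There is then a unique lift $P_0'\in T'$ of $p_0'$ and a unique isometry $i:X^n\to X^n$ whose restriction to $P_0$ is the lift of $j_0$ carrying $P_0$ onto $P_0'$. The goal is to show that $i$ carries every tile of $T$ onto a tile of $T'$, from which $i(T)=T'$ follows immediately because the image of a tiling under an isometry is again a tiling. I would proceed by induction on combinatorial distance from $P_0$ in the dual graph of $T$, which is connected since $X^n$ is. Suppose $i(P_k)=P_k'\in T'$ realizes a triple $(j_k,p_k,p_k')\in I$, and let $P_{k+1}\in T$ be adjacent to $P_k$ across a face $F$ projecting to $f$. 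The hypothesis on $I$ provides an extension $(j_{k+1},p_{k+1},p_{k+1}')\in I$, which lifts to an isometry from $P_{k+1}$ onto a specific tile $Q'\in T'$ adjacent to $P_k'$ across $i(F)$.

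The main obstacle, and the only delicate point, will be verifying that this lifted isometry actually coincides with $i|_{P_{k+1}}$, so that $i(P_{k+1})=Q'\in T'$. Both isometries agree with $i$ on the codimension-one face $F$ (because the lift and $i$ both extend the face map induced by $j_k$, by uniqueness of lifts once a tile is pinned down), and both carry $P_{k+1}$, which lies on one specific side of $F$, into the side of $i(F)$ opposite $P_k'$. Since an isometry of $X^n$ is determined by its action on any codimension-one geodesic hyperplane together with a choice of side, the two isometries agree on $P_{k+1}$. This closes the induction, shows $i(T)\subseteq T'$, and hence gives $i(T)=T'$.
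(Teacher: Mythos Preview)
Your argument is correct and follows essentially the same route as the paper's. The paper factors the inductive step into the pre-stated ``conditions (1) and (2)'' and then simply checks those; you instead carry out the induction on combinatorial distance in the dual graph explicitly and spell out the rigidity step (that two isometries agreeing on a codimension-one face and on the choice of side must coincide). This is exactly the content hidden in the paper's one-line appeal to condition~(2), so the two proofs are the same in substance.
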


\begin{proof}
If $T$ and $T'$ are isometric with isometry $i$, simply let $I$ be the
set of elements of $\Theta$ induced by $i$.

Conversely, suppose we have $I\subseteq \Theta$ having the
stated properties.  Choose any element $(j,p,p')\in I$ and any
isometry $i$ of $X^n$ which induces it. Clearly condition~1 above is
satisfied.  For condition~2 note that if $i$ maps $P$ onto $P'$ and
this induces $(j,p,p')\in I$ then, because this extends across all of
its faces, $i$ maps neighbours of $P$ isometrically onto neighbours of
$P'$ and these too induce elements of $I$. Therefore condition~2 is
satisfied with all the induced triples belonging to $I$.
\end{proof}

An algorithm for finding such a subset $I\subseteq\Theta$ or
establishing that none exists is straightforward. If $\Theta$ is
empty, stop; there is no such subset. Otherwise choose any element of
$\Theta$ and put it in $I$. For each element of $I$ check if we can
extend across all faces. If we can't, remove all elements of $I$ from
$\Theta$ and start again. If we can, and every element we reach is
already in $I$, stop and output $I$. If we can extend across faces of
all elements of $I$, and we obtain new elements not yet in $I$, add
those new elements to $I$ and check again.

Each set $I$ found by the algorithm represents an equivalence class of
isometries carrying $T$ onto $T'$: choose an isometry $i$ inducing any element of
$I$; then isometries $i,i'$ are {\em equivalent} if $i' =g'ig$ where 
$g$ is a covering transformation of $M$ and 
$g'$ is a covering transformation of $M'$.

To find the symmetry group of a tiling $T$ we apply the algorithm with
$T=T'$, $M = M'$ etc. Let $\Gamma$ denote the group of covering
transformations of $M$.  Repeating the algorithm until $\Theta$ is
exhausted we obtain finitely many equivalence classes of symmetries.
These represent the double cosets of $\Gamma$ in $\mbox{\rm Symm}(T)$.
Together with $\Gamma$ they generate $\mbox{\rm Symm}(T)$.

\begin{remark}\label{iscovering}
Each set $I$ found by this algorithm also represents a common covering $N$ of
$M$ and $M'$ constructed as follows: 
For each element $r = (j,p,p')\in I$ let $p_r$ denote a
copy of $p$.  Let $N$ be the disjoint union of the polyhedra $p_r$ as
$r$ varies over $I$, with the following face identifications. Each face $f$ of $p$
appears as a face $f_r$ of $p_r$.  Whenever $r$ extends across
$f$ to $s = (j_q,q,q')$,
identify face $f_r$ of $p_r$ with face $f_s$ of $q_s$.
\end{remark}

\subsection{Example}
Let $M$ be the Euclidean torus obtained by gluing a $1$ by $3$
rectangle along opposite pairs of edges. Let $M'$ be another such
torus obtained by gluing a $2$ by $1$ rectangle. Subdivide each into
unit squares as shown, so that $S$, the subdivision of $M$, equals
$\{A,B,C\}$ while $S' = \{P,Q\}$. Both $S$ and $S'$ lift to tilings
of the plane by unit squares.
$$\epsfig{file=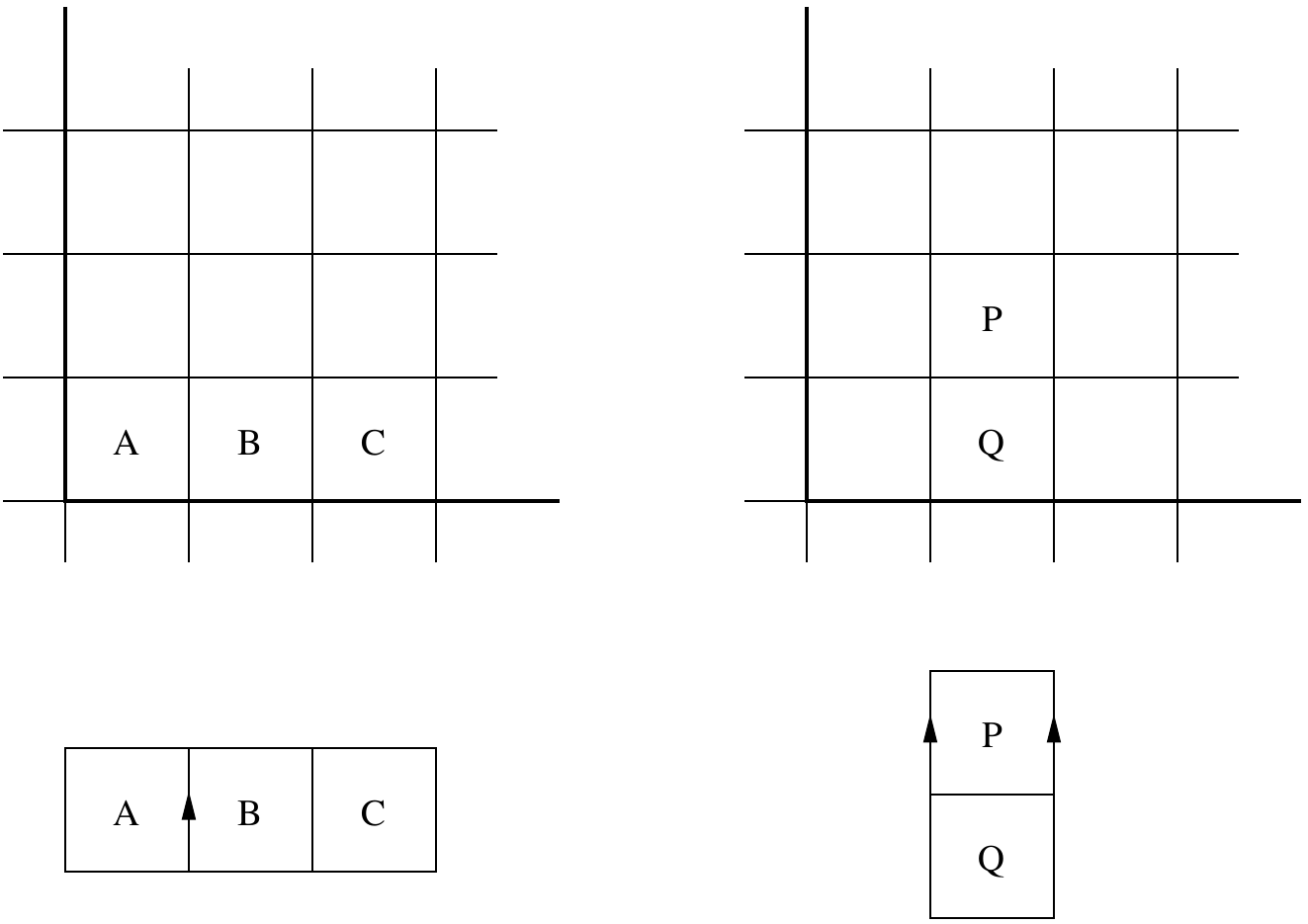,height=5cm}$$
$\Theta$ consists of $48$ elements since there are $8$ ways of isometrically 
mapping
one unit square onto another and $6$ combinations of squares to be
mapped. Suppose we start with $(t_{AP},A,P)$ where $t_{AP}$ denotes
the translation carrying $A$ onto $P$ in the picture. We can extend
$(t_{AP},A,P)$ across the marked edge (whose image in $M'$ is also
shown) to obtain $(t_{BP},B,P)$. Continuing until we have extended
across every available edge we obtain $6$ elements of
$\Theta$ which we can abbreviate to $\{t_{AP},t_{BP},t_{CP},
t_{AQ},t_{BQ},t_{CQ}\}$. These give rise to the following common
covering of $M$ and $M'$.
$$\epsfig{file=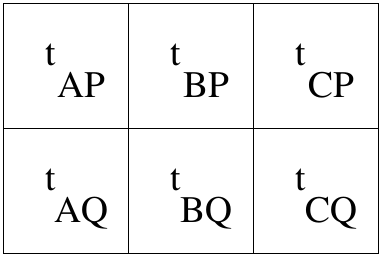,height=1.35cm}$$
Suppose instead we begin with a clockwise rotation $r_{AP}$ through
$90^{\circ}$ carrying $A$ onto $P$. Then we obtain a different common
cover.
$$\raisebox{0cm}[1cm]{\epsfig{file=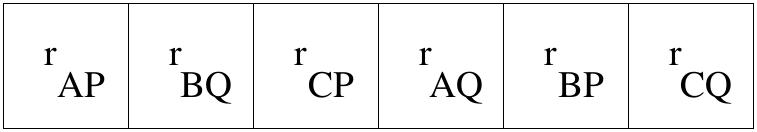,height=.725cm}}$$
Suppose finally we subdivide one of the squares in $S'$ into two
triangles. Now the tilings are clearly not isometric.
$$\epsfig{file=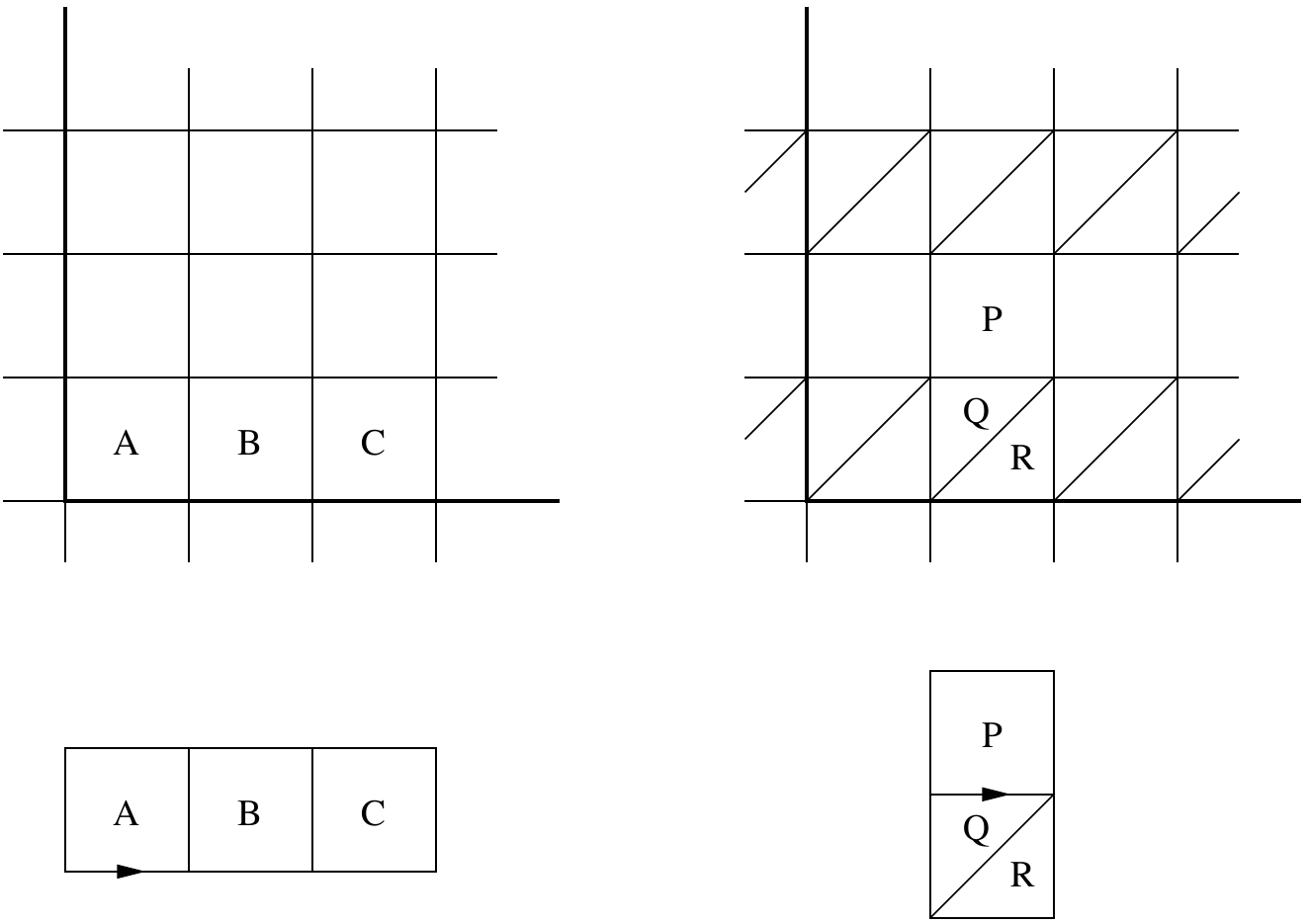,height=5cm}$$
The algorithm
might start with $t_{AP}$ but when it tries to extend this across the
marked edge it will find that the induced mapping from an edge of $A$
to an edge of $Q$ does not extend to any isometry from $A$ to $Q$.

\subsection{Combinatorial construction of coverings}\label{combisom}
Mostow-Prasad rigidity often allows us to work in a purely combinatorial setting;
this motivates the following. 
The algorithm described above can be viewed as a method for
constructing a common cover of two manifolds which admit suitable
subdivisions into polyhedra. 
The algorithm works equally well for topological spaces
$M$ and $M'$ admitting $n$-dimensional polyhedral decompositions 
(i.e. obtained by gluing $n$-dimensional, Euclidean or
finite volume hyperbolic, convex polyhedra pairwise along their
$(n-1)$-dimensional faces, such that open faces of dimension $q\leq n$
embed).  We then let 
each element $(j,p,p')\in \Theta$ represent a
combinatorial isomorphism between $p$ and $p'$.  (We can barycentrically subdivide
the decompositions of $M$ and $M'$ in order to realize the $j$'s as
piecewise linear homeomorphisms in a canonical way.) Now, if the
algorithm is successful it constructs, in general, a branched covering
of $M$ and $M'$, branched over the codimension $2$ skeleta of $M$ and
$M'$.

When $M$ and $M'$ are PL-manifolds, we would like to construct an
unbranched cover
which is also a manifold. It is sufficient to check that the cover we
construct does not branch over any codimension $2$ cell of $M$ or
$M'$. For if this is the case we proceed by induction on $q$ to show
that our cover is not branched over any codimension $q$ cell of $M$
(or $M'$). 
By construction, there is no branching over cells of codimension $0$ and $1$,
and we assume that there is no branching over cells of codimension 2.
So suppose $q \ge 3$ and there is no branching over  cells of codimension $\leq q-1$. 
Then the
link of a codimension $q$ cell in the cover is a connected unbranched
covering of the link of a codimension $q$ cell of $M$. But since the
latter is a $(q-1)$-sphere, hence simply connected since $q\ge3$, the covering is a
homeomorphism.

In the case of most interest to us, namely when $M$ and $M'$ are
cusped hyperbolic $3$-manifolds, Mostow-Prasad rigidity ensures that if $N$
topologically covers both $M$ and $M'$ then the induced hyperbolic
structures on $N$ are isometric.

Frequently, an ideal cell decomposition of a hyperbolic
$3$-manifold consists entirely of ideal tetrahedra. If this is the
case for $M$ and $M'$, then our algorithm will always find a common
cover, branching over the edge sets of the two manifolds 
(elements of $\Theta$ will always extend over their faces because 
all tetrahedra are combinatorially equivalent). 
This will indicate that the two manifolds are commensurable only if the covering
is in fact unbranched. When an element of $\Theta$ induces a mapping
of an edge $e$ of (the ideal cell decomposition of) $M$ onto an edge
$e'$ of $M'$, the resulting cover will be unbranched if and only if the
order of $e$ (the number of tetrahedra to which glue around it) is
equal to the order of $e'$. For example, this has the following corollary.

\begin{theorem} If a
finite volume hyperbolic $3$-manifold $M$ admits a
decomposition into ideal tetrahedra such that the order of every edge
is $6$, then $M$ is commensurable with the complement of the
figure-$8$ knot.
\end{theorem}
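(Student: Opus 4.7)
My plan is to apply the combinatorial common-cover construction of Section~\ref{combisom} to the pair $(M,M')$, where $M' = S^3\setminus K$ is the complement of the figure-eight knot, and then invoke Mostow--Prasad rigidity.

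First I would recall the classical ideal triangulation of $M'$ by two regular ideal tetrahedra (due to Riley and Thurston): the $12$ tetrahedral edges are identified to $2$ edges of the manifold, each of valence $6$ (consistent with each dihedral angle of a regular ideal tetrahedron being $\pi/3$, so $6\cdot\pi/3 = 2\pi$). Thus $M'$ satisfies the same hypothesis as $M$.

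Next I would run the combinatorial algorithm of Section~\ref{combisom} on $(M,M')$, treating each ideal tetrahedron purely combinatorially. Since any combinatorial isomorphism of a triangular face extends uniquely to a combinatorial isomorphism of the ambient $3$-simplex, the set $\Theta$ is non-empty and the ``extend across a face'' step never fails. Starting from any element of $\Theta$ and closing under extensions across faces produces a finite non-empty $I\subseteq\Theta$ closed under extension, and hence a common (a priori branched) covering $N$ of $M$ and $M'$ as in Remark~\ref{iscovering}. By the criterion spelled out in Section~\ref{combisom}, this cover is unbranched over codimension-$2$ cells (edges) if and only if each triple in $I$ identifies edges of equal order; but every edge of both $M$ and $M'$ has order $6$, so this holds automatically. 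The inductive link argument of Section~\ref{combisom} (using $\pi_1(S^{q-1})=1$ for $q\geq 3$) then promotes ``unbranched in codimension $2$'' to an honest topological covering $N\to M$, $N\to M'$ of finite degree.

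Finally, since $M$ and $M'$ are finite-volume cusped hyperbolic $3$-manifolds, $N$ inherits a finite-volume complete hyperbolic structure by pullback from each, and by Mostow--Prasad rigidity these two structures agree. Hence $N$ is a common finite-sheeted hyperbolic cover, so $M$ is commensurable with the figure-eight knot complement. The only non-formal ingredient is the identification of $M'$ as a model with an all-order-$6$ ideal-tetrahedra decomposition; the rest is a direct invocation of the machinery of Section~\ref{combisom}, and in particular one does not need to argue that the tetrahedra of $M$ are themselves regular (Mostow rigidity handles the geometry downstream).
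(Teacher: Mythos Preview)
Your proposal is correct and follows precisely the approach the paper intends: the theorem is stated as an immediate corollary of the combinatorial covering construction of Section~\ref{combisom}, applied with $M'$ equal to the figure-eight knot complement (whose standard two-tetrahedron triangulation has all edges of order~$6$), using the equal-edge-order criterion to ensure the cover is unbranched and then invoking Mostow--Prasad rigidity. Your write-up simply makes explicit the details the paper leaves to the reader.
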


\section{Enumerating canonical cell decompositions I}
For 1-cusped hyperbolic manifolds with discrete commensurator we now
have all the ingredients of an effective method for testing
commensurability. For multi-cusped manifolds we need a way to search
through the (finite) set of all canonical ideal cell decompositions. 
In this section and the next we describe two alternative approaches to
this problem, while in Section~\ref{cuspcom} we show how the search can 
be restricted for greater efficiency. 

The first approach is very simple minded: if we can
bound the degree with which $M$ covers its commensurator quotient, we can
enumerate all cusp cross sections which could possibly cover equal
area cross sections in the quotient. Such degree bounds can be
obtained from estimates on the the minimum volume of cusped non-arithmetic
hyperbolic 3-manifolds (\cite{Me}, \cite{Ad1}, \cite{NR1}).

The second approach is geometric and may be of more theoretical
interest since it truly finds
{\em all} the canonical cell decompositions. In fact it associates
with a given $c$-cusped $M$, a convex polytope in $\R^c$ whose
$k$-dimensional faces, for $0\leq k < c$, are in 1-1 correspondence
with canonical cell decompositions of $M$. 
\medskip

Let $M$ be a hyperbolic orbifold with (horoball) cusp neighbourhoods
 $C_1,\ldots,C_m$ and let
$N$ be a degree $d$ quotient of $M$ with corresponding cusp
neighbourhoods $c_1,\ldots,c_n$. Let
$\pi:M \rightarrow N$ be the covering projection and let 
$c_{j(i)} = \pi(C_i)$.  If we choose horospherical cross sections in $N$ with ``area''
(i.e. codimension 1 volume) equal to  $1$,
the area of $\partial C_i$ is equal to the degree with which $C_i$ covers
$c_{j(i)}$. The sum of the areas of the $C_i$ covering $c_j$ will be
$d$ for each $c_j$.

Thus, in order to find a possible degree $d$ quotient of $M$ via the
canonical cell decomposition, we can enumerate the possible integer
area vectors as follows. For each $n$, $1\leq n\leq m$, and each
partition of $\{1,\ldots,m\}$ into $n$ non-empty subsets $I_1,\ldots,I_n$,
enumerate all area vectors $(a_1,\ldots,a_m)$ such that each $a_i$ is a positive integer and $\sum_{i\in I_j} a_i = d$ for $j=1,\ldots,n$.

Example: if $m=3$ we have partitions $\{\{1,2,3\}\}$, $\{\{1,2\},\{3\}\}$,
$\{\{1,3\},\{2\}\}$, $\{\{1\},\{2,3\}\}$ and $\{\{1\},\{2\},\{3\}\}$. The
first partition admits $\frac{1}{2}(d-1)(d-2)$ area vectors, the next
three admit $d-1$ each, and the last, just one. 

We can enumerate area vectors corresponding to possible quotients of degree $d$ as follows.
Any quotient of $M$ of degree $d$ has $n \le m$ cusps and has a corresponding area vector
$(a_1, \ldots, a_m)$ with sum $\sum_{i=1}^m a_i = nd$. 
So if we fix an integer $D \ge m$ and enumerate all positive
integer area vectors summing to at most $D$, we will find all
quotients of degree $d$ with $n\le m$ cusps such that $nd \leq D$. 
In particular this will include all quotients with at most $m-1$ cusps provided
$d \le D/(m-1)$.  Since the canonical cell decomposition is determined by the {\em ratio}
of the areas, any $m$ cusped quotient is found using the area vector $(1,\ldots,1)$
with $\sum_i a_i = m \le D$. So we will find {\em all} canonical cell decompositions arising from quotients of degree $d \leq D/(m-1)$.

\section{Enumerating canonical cell decompositions II}

Let $M$ be a hyperbolic $n$-manifold with $c>0$ cusps.  Then the set of
possible choices of (not necessarily disjoint) horospherical cross sections dual
to the cusps of $M$ is parametrized by the vector of their areas 
(i.e. $(n-1)$-dimensional volumes)
 in $\R_{>0}^c = \{(v_1,\ldots,v_c)\in \R^c \mid v_1>0, \ldots, v_c > 0 \}.$ 
 In fact it turns out to be more convenient to parametrize
by the $(n-1)$th root of area, a quantity we will refer to as {\em size.}
Multiplying a size vector $v\in \R_{>0}^c$ by a constant $\lambda>0$
has the effect of shifting the corresponding horospherical cross sections a
distance $\log(\lambda)$ down (i.e. away from) the cusps.

For each $v\in\R_{>0}^c$ we obtain a Ford spine.  It is clear from the
definition that if we choose a set of disjoint horospherical  cross sections and
then shift them all up or down by the same amount we get the same Ford
spine. Thus the set of possible Ford spines is parametrized by the set
of rays in $\R_{>0}^c$, or equivalently, by points in the open
$(c-1)$-dimensional simplex ${\mathcal S} = \{(v_1,\ldots,v_c)\in
\R_{>0}^c \mid v_1+\dots +v_c = 1\}.$ For a $1$-cusped manifold the
Ford spine is unique. 

Dual to each Ford spine is a {\em canonical cell decomposition} $D(v)$. As
we vary $v\in\mathcal S$ the decomposition changes only when the
combinatorics of the spine changes. To better understand this
dependence we now review an
alternative approach to defining $D(v)$, namely the original
one of Epstein-Penner in \cite{EP}.

We work in Minkowski space $\E^{n,1}$ with the inner product 
$\ast$ defined by
$$x \ast y = x_1 y_1 + \ldots + x_n y_n - x_{n+1} y_{n+1}$$
for $x=(x_1, \ldots, x_{n+1})$, $y=(y_1, \ldots, y_{n+1})$ in $\R^{n+1}$.
Then hyperbolic space $\H^n$ is the upper sheet of the hyperboloid $x*x=-1$,
and the horospheres in $\H^n$ are represented by the intersections of hyperplanes, having light-like normal vectors, with $\H^n$. Each such hyperplane $H$ has a unique Minkowski normal $\mathbf n$ such that $x\in H$ if and only if $x\ast{\mathbf n} = -1$.

Let $\Gamma$ denote the group of covering transformations of $\H^n$
over $M$.  Each size vector $v$ gives rise to a $\Gamma$-invariant set
of horospheres in $\H^n$. The resulting set of normals in Minkowski space 
is invariant under the action of the group $\Gamma$. 
The convex hull of this set of points, which we shall refer to as
the {\em Epstein-Penner convex hull,} intersects every ray based
at the origin passing through a point in the upper sheet of the hyperboloid.
The boundary of this convex set is a union of closed convex
$n$-dimensional polytopes having coplanar light-like vertices.
Epstein and Penner \cite{EP} show that  
these project to a locally finite, $\Gamma$-invariant set of ideal polyhedra
in $\H^n$, which in turn project to a finite set $D(v)$ of ideal hyperbolic
polyhedra in $M$.

Starting with a given ideal hyperbolic cell decomposition $D$ of $M$ and a
size vector $v$, we proceed next to describe necessary and sufficient
conditions for $D$ to be the canonical cell decomposition $D(v)$, as in
\cite{We1} and \cite{SW1}.

Let $C$ be a cell of $D$. Then $v$ determines a horospherical
cross section to each ideal vertex of $C$. 
We lift $C$ and this choice of horospheres to $\H^n$. In Minkowski space, this gives a convex (Euclidean) $n$-dimensional polytope whose vertices are the hyperplane normals for these horospheres. 
Whenever $C$ is not a simplex, it is necessary to add the condition that these vertices are coplanar. If this is satisfied for all the non-simplicial
cells of $D$, we can lift each cell to an $n$-dimensional polytope in
Minkowski space with vertices corresponding to the choice of horospheres determined by the size vector $v$. If $C$ and
$C'$ are neighbouring cells in $D$ it is necessary that the angle
between neighbouring lifts into Minkowski space be convex upwards. Together
these conditions are also sufficient to imply $D = D(v)$.

These conditions can be expressed as a set of linear equations and linear inequalities on the entries of the size vector $v$.

\begin{proposition} \label{conditions_for_D(v)}
Let $D$ be an ideal hyperbolic cell decomposition of a cusped hyperbolic $n$-manifold $M$ with $c$ cusps.
Then there exist matrices $L_D$ and $F_D$ with $c$ columns such that, for 
$v\in\R_{>0}^c$,
$D(v) = D$ if and only if $L_D v=0$ and $F_D v>0$.
\end{proposition}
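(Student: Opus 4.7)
The plan is to translate the two conditions characterizing $D = D(v)$ recalled in the paragraph preceding the proposition---namely, coplanarity of Minkowski-space lifts of non-simplicial cells and upward convexity of dihedrals across shared faces---into linear equalities and strict linear inequalities in the entries of $v$, then collect these into the matrices $L_D$ (for equalities) and $F_D$ (for strict inequalities).

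The starting observation is that in $\E^{n,1}$, horoball normals depend linearly on $v$. For each parabolic fixed point $\xi \in \partial \H^n$ lying over the $i$th cusp, let $\mathbf e_\xi$ be the light-like Minkowski normal to the size-$1$ horosphere at $\xi$; then the size-$v_i$ horosphere at $\xi$ has Minkowski normal $v_i \mathbf e_\xi$.  Once one fixes, for each cell of $D$, a lift to $\H^n$ with a distinguished parabolic fixed point at each ideal vertex---data depending only on the combinatorics of $D$---every vertex of the Minkowski-space lift of any cell of $D$ takes the form $v_i\mathbf e_\xi$ and is therefore a linear function of $v$.

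To pass from linearity of vertices to linearity of the coplanarity and convexity conditions, I would invoke the generalized tilt formula of Sakuma--Weeks \cite{SW1} (in dimension three, the original formula of Weeks \cite{We1}). For a codimension-one face $F$ of a cell $C$ of $D$, the tilt $t_F(C)$ is the signed Minkowski ``height'' of the vertex (or, in the non-simplicial case, an extra vertex) of $C$ opposite $F$ above the affine hyperplane spanned by the Minkowski lift of $F$, after a canonical space-like normalization of that hyperplane's normal. The content of the tilt formula is that $t_F(C)$ is a homogeneous linear function $\sum_j a_j v_j$ of $v$, with coefficients $a_j$ depending only on the vectors $\mathbf e_\xi$ and on the combinatorics of $D$. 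Coplanarity of a non-simplex cell $C$ then translates into the vanishing of the tilts of each of its ``extra'' vertices into the hyperplane spanned by a fixed affinely independent sub-cell of $n+1$ vertices; and convex-upward gluing across a face $F = C \cap C'$ translates into $t_F(C) + t_F(C') > 0$.

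The matrices are now built by reading off one row of $L_D$ for each coplanarity equation (as extra vertices range over non-simplex cells) and one row of $F_D$ for each codimension-one face of $D$. By the Epstein--Penner criterion recalled above, for $v \in \R_{>0}^c$ the conditions $L_D v = 0$ and $F_D v > 0$ hold jointly if and only if $D(v) = D$. The main obstacle in this plan is the linearity of tilts in $v$; this is precisely the content of \cite{SW1}, which I would cite directly rather than re-derive, so the remaining work reduces to bookkeeping over the cells and faces of $D$.
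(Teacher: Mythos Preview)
Your plan is sound and the conclusion is right, but one claim in your setup is false, and the paper dispatches the key linearity step directly rather than by citation.

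The scaling goes the other way: with the paper's convention (increasing size moves the horosphere \emph{away} from the cusp), the Minkowski normal of the size-$v_i$ horosphere at $\xi$ is $\mathbf e_\xi/v_i$, not $v_i\mathbf e_\xi$. Thus the Epstein--Penner vertices are \emph{not} linear in $v$, and your sentence ``every vertex \ldots\ is therefore a linear function of $v$'' is incorrect. This is not fatal to your argument, since you do not actually use that linearity downstream---you cite the tilt formula for the linearity of the tilts themselves---but it is worth correcting, because it is precisely the nonlinearity of the vertices that makes the proposition have content. (Also, the generalized tilt formula is in \cite{SW2}, not \cite{SW1}.)

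The paper's proof derives the linearity in a few lines rather than citing it. Write the affine hyperplane through the first $n{+}1$ lifted vertices $\n_0/v_{c(0)},\ldots,\n_n/v_{c(n)}$ as $\{x : x\cdot N_v = 1\}$ (Euclidean dot product). Clearing denominators gives $\n_j\cdot N_v = v_{c(j)}$ for $j=0,\ldots,n$, so $N_v = M_C(v_{c(0)},\ldots,v_{c(n)})^t$ with $M_C$ the inverse of the matrix with rows $\n_0,\ldots,\n_n$; hence $N_v$ is linear in $v$. Coplanarity of a further vertex $\n_j/v_{c(j)}$ then reads $\n_j\cdot N_v - v_{c(j)} = 0$, and upward convexity across a codimension-one face reads $\n'_k\cdot N_v - v_{c(k)} > 0$, both manifestly linear in $v$. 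Stacking these rows over all non-simplicial cells and all internal faces gives $L_D$ and $F_D$. This is effectively an in-place derivation of the tilt linearity you would otherwise quote.
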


\noindent Note: here $v$ is written as a column vector, and the condition $F_D v>0$ means that each entry of the vector $F_D v$ is positive.

\begin{proof}
Let $v_i$ denote the entry of $v$ corresponding to
the $i$th cusp of $M$. Let $\n_j$ be the vertex representative for the
$j$th vertex of $C$ lifted to Minkowski space for the choice of horospheres 
given by $v = (1,\ldots ,1)$. Then for an arbitrary size vector $v$
the corresponding representative is $\n_j/v_{c(j)}$, where $c(j)$ is the
cusp of the $j$th vertex of $C$. 

The coplanarity condition on a non-simplicial cell $C$ gives a set of
linear equations satisfied by $v$, one for each vertex of $C$ in
excess of $n+1$, as follows.  Let $N_v$ be a Euclidean normal to the
hyperplane containing $\{\n_0/v_{c(0)},\ldots ,\n_{n}/v_{c(n)}\}$ such
that $(\n_j/v_{c(j)})\cdot N_v=1$ for $j=0, \ldots, n$, 
where $\cdot$ denotes the Euclidean dot product. 
Writing $M_C$ for the inverse of the
matrix with rows $\n_j$ we obtain $N_v = M_C (v_{c(0)},\ldots, v_{c(n)})^t$,
which is a linear function of $v$. For $j>n$, $\n_j/v_{c(j)}$ belongs
to this hyperplane if and only if $\n_j\cdot N_v - v_{c(j)} = 0$, which is
linear in $v$.  The full set of constraints for $C$ gives 
a matrix equation $L_C v=0$.

The convexity condition at an $(n-1)$-cell $f$ of $D$, being the
common face of $n$-cells $C$ and $C'$, can be expressed as follows.
Let $N_v$, as above, be the defining normal for the hyperplane
containing the lift of $C$ determined by $v$. Let $\n'_k/v_{c(k)}$ be
a vertex of an adjacent lift of $C'$, not in the lift of $f$. This
vertex lies above the hyperplane if and only if $(\n'_k/v_{c(k)})\cdot N_v
> 1$, or equivalently $\n'_k\cdot N_v - v_{c(k)} > 0$.  We refer to the
left-hand side of this inequality as the {\em tilt} at $f$ of $v$ and
express the condition as $F_f v > 0$, where $F_f$ is a suitable
row-vector. (Note that the sign of our tilt function is opposite to
that of \cite{We1} and \cite{SW2}.)

Finally, concatenate the matrices $L_C$ into a matrix $L_D$
and the rows $F_f$ into a matrix $F_D$. 
\end{proof}

Let $\P_D$ denote the set of $v\in\R_{>0}^c$ such that $L_D v=0$ and
$F_D v>0$. We call this the {\em parameter cell} of $D$ since it
contains all cusp size parameters $v$ such that $D(v)=D$. Each
$v\in\R_{>0}^c$ belongs to a parameter cell, namely $\P(v) =
\P_{D(v)}$. The parameter cell $\P_D$ is non-empty if and only if $D$ is a canonical cell
decomposition.

It is shown in \cite{Ak} that
the number of canonical cell decompositions is finite. Therefore
$\R_{>0}^c$ is a union of finitely many parameter cells.

\begin{proposition} \label{perturb}
Each $v\in \R_{>0}^c$ can be perturbed to obtain a nearby vector
$v'$ such that $\P(v')$ has dimension $c$ and $\P(v)$ is a face of
$\P(v')$ (or equals $\P(v)$ if this has dimension $c$).
\end{proposition}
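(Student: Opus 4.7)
The plan is to produce $v'$ by a small generic perturbation of $v$ and then to recognise $\P(v)$ as a face of $\P(v')$ using the linear structure from Proposition~\ref{conditions_for_D(v)}. If $\dim \P(v) = c$ we simply take $v' = v$. Otherwise $L_{D(v)}$ has positive rank, i.e. $D := D(v)$ has at least one non-simplicial cell. I would set $v' = v + \varepsilon u$ with $\varepsilon > 0$ small and $u \in \R^c$ a generic direction, and show that $D' := D(v')$ is obtained from $D$ by triangulating each non-simplicial cell into $n$-simplices while leaving the already simplicial cells of $D$ unchanged.

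The key combinatorial claim that $D'$ is a simplicial refinement of $D$ comes from the Epstein-Penner description: $D$ is dual to the upper boundary of the convex hull in Minkowski space of the horosphere normals, and a non-simplicial cell of $D$ corresponds to a face of this hull whose more than $n+1$ vertices lie on a common supporting hyperplane. For $v'$ close to $v$ the hull deforms slightly; supporting hyperplanes at already-simplicial faces persist, while at each non-simplicial face the coplanar vertices generically separate into \emph{upper} and \emph{lower} groups, forcing that face to break into simplicial pieces whose combinatorial pattern is determined by the sign pattern of the perturbation. Choosing $u$ outside finitely many bad hyperplanes in $\R^c$ ensures that no new coplanar coincidences arise, so every cell of $D'$ is an $n$-simplex.

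Once $D'$ is simplicial, $L_{D'}$ has no rows, so $\P(v') = \P_{D'}$ is the open polyhedron $\{w : F_{D'} w > 0\}$ of dimension $c$. To exhibit $\P(v)$ as a face of it, partition the $(n-1)$-faces of $D'$ into \emph{inherited} faces (already present in $D$) and \emph{internal} faces (new faces introduced by subdividing some non-simplicial cell of $D$). At $w = v$, the tilt at each inherited face is positive, yielding exactly the inequalities $F_D v > 0$; at each internal face the tilt vanishes, because both adjacent simplices arise from the same coplanar non-simplicial cell of $D$. A short linear algebra step, using connectivity of the dual graph of the triangulation inside each non-simplicial cell, shows that the linear forms \emph{tilt at an internal face} cut out the same subspace as the coplanarity equations $L_D w = 0$. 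Hence the face of $\P_{D'}$ defined by setting all internal tilt forms to zero while keeping the inherited ones positive is precisely $\P_D = \P(v)$.

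The main obstacle is the geometric claim in the second paragraph: a careful continuity argument for the Epstein-Penner convex hull is required to show that a small generic perturbation breaks exactly the coplanarities encoded by $L_D$ and creates no new ones. Identifying the finite list of bad linear conditions on $u$ (those that would produce a fresh coplanar configuration of $n+2$ normals, or flatten an $(n-1)$-face outside $D$) and verifying that avoiding them suffices is the technical heart of the argument; once this is in hand, the remaining steps are a direct translation between the coplanarity and tilt formulations of a parameter cell.
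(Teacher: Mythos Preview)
Your overall architecture matches the paper's, but there is a genuine gap in your key geometric claim. You assert that for a generic small perturbation $v' = v + \varepsilon u$ the decomposition $D' = D(v')$ is \emph{simplicial}, i.e.\ every cell is an $n$-simplex. This is false in general. As the paper notes in Remark~\ref{non-generic-cell} (and exhibits explicitly for the Borromean rings in Section~\ref{b_rings_ex}), there exist $c$-dimensional parameter cells whose canonical decomposition contains non-simplicial cells: the coplanarity of the Minkowski lifts of their vertices holds identically in $v$ (the corresponding rows of $L_C$ are zero), so no perturbation of $v$ can break it. Your proposed mechanism --- ``coplanar vertices generically separate into upper and lower groups'' --- simply does not apply to such structural coplanarities, and no choice of $u$ outside finitely many hyperplanes will make those cells simplicial.

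The fix is to aim not for $D'$ simplicial but for $L_{D'}$ zero (equivalently $\dim\P(v')=c$), which is strictly weaker. The paper does this iteratively: perturb $v$ off the subspace $\{L_D v = 0\}$; for small perturbations no convex dihedral angle flattens, so $D'$ is a refinement of $D$, the rows of $L_{D'}$ are among those of $L_D$, and since $L_D v' \neq 0$ while $L_{D'} v' = 0$ at least one row is lost; repeat until $L_{D'}$ is empty. Your single-step ``generic $u$'' approach can be repaired to reach the same conclusion, but only once you drop the simpliciality claim. After that, your identification of $\P(v)$ as a face of $\P(v')$ via inherited versus internal $(n-1)$-faces is essentially the paper's argument and is correct.
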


\begin{proof}
Let $D = D(v)$.
If $L_D$ is zero (or empty) then $\P(v)$ is an open subset, hence a $c$-dimensional
cell and we just set $v'=v$.

Otherwise, perturb $v$ such that it leaves the linear subspace
determined by $L_D v=0$. For a small perturbation the Epstein-Penner
convex hull changes as follows: no dihedral angle between adjacent
$n$-faces goes to $\pi$ but some non-simplicial $n$-faces may be
subdivided if their vertices become non-coplanar. It follows that
$L_D$ may lose rows and $F_D$ may gain rows. Let $D'$ be the new
decomposition. Then $L_{D'}\neq L_D$ because $L_{D'} v'=0$ while
$L_D v'\neq 0$. Repeat until $L_{D'}$ is zero (or empty). Then $\P(v')$ has 
dimension $c$.

Now $v$ belongs to a face of $\P_{D'}$ since it satisfies $F_f v > 0$
for each face $f$ common to $D$ and $D'$, and $F_{f'} v = 0$
for each face $f'$ of $D'$ not in $D$.  The former condition amounts
to $F_D v > 0$. We have to show that the latter is equivalent to
$L_D v=0$. But that is equivalent to the coplanarity of the lifted
vertices of each $n$-cell of $D$. Such a cell may be subdivided by new
faces $f'$ in $D'$. Then the vertices will be coplanar at $v$ if and
only if the tilt at each subdividing face is zero, i.e. if and only if
$F_{f'} v = 0$ for the subdividing faces.
\end{proof}

The above proposition implies that each face of a parameter
cell is another parameter cell; the decomposition corresponding to a
parameter cell is a refinement of the decompositions corresponding to
its faces.

\begin{remark} \label{non-generic-cell}
It is tempting to suppose that the canonical cell decomposition of $M$
corresponding to a $c$ dimensional parameter cell must consist entirely
of ideal simplices but this need not be the case. In general we can
have cell decompositions with non-simplicial cells such that $L_D$ is
a zero matrix. (For example, this occurs for the Borromean rings complement --- see Section \ref{b_rings_ex}  below.)
\end{remark}

We now have the following algorithm for finding all canonical cell
decompositions. First we find all the $c$-dimensional
parameter cells.
\begin{enumerate}
\item Choose an arbitrary $v\in \R^c_{>0}$.
\item Perturb $v$ if necessary,
as in the proof of Proposition~\ref{perturb}, so that $\P(v)$ has
dimension $c$, and add it to our list of cells.
\item If the closure of the cells we have found so far does not
contain the whole of $\R^c_{>0}$, choose a new $v$ not in the closure of
any cell found so far and repeat step 2.
\end{enumerate}
By the finiteness result quoted above, this algorithm eventually
terminates. We can then enumerate all canonical cell decompositions by
enumerating the faces of all dimensions of the cells $\P(v)$.

\medskip

While the computational geometry involved in implementing the above
algorithm is certainly possible, it is not particularly nice. We
explain a refinement which gives a little more insight and an algorithm which
is easier to implement.

For a decomposition $D$ of $M$, let ${\Sigma}_D$ denote the row
vector obtained by adding together the rows of $F_D$. We define the
{\em tilt polytope} of $M$ to be the set of $v\in\R^c_{>0}$ such that
${\Sigma}_D\cdot v < 1$ for all canonical cell decompositions $D$ of
$M$.

\begin{proposition}
The tilt polytope $T$ of $M$ is bounded. The parameter cells $\P_D$ of $M$
are the cones over the origin of those faces of $T$ which are not
contained in $\partial \R^c_{>0}$.
\end{proposition}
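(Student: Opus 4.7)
Both parts of the proposition will be derived from the identity
\[
\phi(v) \;:=\; \Sigma_{D(v)}\cdot v \;=\; \max_{D}\,\Sigma_D\cdot v,
\]
where the maximum is taken over all canonical cell decompositions $D$ of $M$. This says that among the finitely many linear functionals $\Sigma_D$, the tilt-sum at $v$ is maximised by the canonical decomposition $D(v)$ actually realised at $v$. The piecewise linear function $\phi$ is well-defined and continuous on $\R^c_{>0}$: for a common boundary $\P_{D'}\subset\overline{\P_D}$ (where $D$ refines $D'$ by Proposition~\ref{perturb}), the rows of $F_D$ missing from $F_{D'}$ correspond to faces of $D$ that are internal to cells of $D'$, and the coplanarity conditions packaged in $L_{D'}$ force exactly those tilts to vanish on $\P_{D'}$, so $\Sigma_D\cdot v=\Sigma_{D'}\cdot v$ there.

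To establish the identity I would argue locally that $\Sigma_D\cdot v$ strictly decreases when one crosses a wall from $\P_D$ into an adjacent full-dimensional cell $\P_{D''}$. Such a wall corresponds to a combinatorial ``flip'' of the canonical decomposition --- a 2-3 move of tetrahedra in dimension three, for instance --- in which a subset of faces of $D$ is swapped for a different subset of faces of $D''$ spanning complementary ideal vertices. On $\P_D$, the faces of $D''$ not shared with $D$ sit in non-convex configurations of the Epstein-Penner hull and therefore carry negative tilts, while the swapped-out faces of $D$ carry positive tilts, so the difference $\Sigma_D\cdot v-\Sigma_{D''}\cdot v$ simplifies to a sum of positive contributions. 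Globalising this local inequality along a sequence of flips connecting $\P_D$ to any other canonical cell $\P_{D'}$ then yields $\Sigma_D\cdot v\ge\Sigma_{D'}\cdot v$ on $\P_D$. Verifying the local decrement rigorously (and ensuring every transition between parameter cells can be described as such a flip) is the main obstacle.

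Granted the identity, the face correspondence is immediate. For each canonical $D$ set $K_D:=\P_D\cap\{v:\Sigma_D\cdot v=1\}$; by the identity every $v\in K_D$ satisfies $\Sigma_{D'}\cdot v\le\Sigma_D\cdot v=1$ for all canonical $D'$, so $K_D\subset\overline{T}$ and the hyperplane $\Sigma_D\cdot v=1$ supports $\overline{T}$ along $K_D$, exhibiting $K_D$ as a face of $T$. Since $\P_D$ is invariant under positive scaling and $\Sigma_D$ is strictly positive on it, $\P_D$ is the open cone over $K_D$. Conversely, any face of $\overline{T}$ not contained in $\partial\R^c_{>0}$ lies in some supporting hyperplane $\Sigma_D\cdot v=1$, and the equality case of the identity places its relative interior in $\overline{\P_D}$, so the face is $K_D$.

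For boundedness, the identity gives $T=\{v\in\R^c_{>0}:\phi(v)<1\}$ with $\phi$ continuous, strictly positive, and positively homogeneous of degree one. Since the parameter cells form a finite polyhedral subdivision, $\phi$ extends continuously to the piecewise linear function $\max_D\Sigma_D\cdot v$ on all of $\R^c_{\ge 0}$. A further step shows the extension is strictly positive off the origin: for $w\in\partial\R^c_{\ge 0}\setminus\{0\}$, stabilisation of $D(w+\varepsilon\mathbf{1})$ as $\varepsilon\to 0^+$ singles out a canonical $D$ achieving the maximum, and a limiting analysis of the tilts (using that $w$ has some positive coordinate so at least one tilt survives) gives $\Sigma_D\cdot w>0$. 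Compactness of the slice $\{\|v\|=1\}\cap\R^c_{\ge 0}$ then yields a positive lower bound $\alpha$ for $\phi$ there, and hence $T\subset\{\|v\|<1/\alpha\}$, completing the proof.
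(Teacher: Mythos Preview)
Your overall strategy matches the paper's: the heart of the argument is indeed the inequality $\Sigma_{D(v)}\cdot v \ge \Sigma_{D'}\cdot v$ for every canonical $D'$, strict when $\P_{D(v)}$ is $c$-dimensional and $D'\ne D(v)$. You also correctly isolate the face relation: when $\P_{D'}$ is a face of $\P_D$, the rows of $F_{D'}$ are a proper subset of those of $F_D$, so $\Sigma_D\cdot v>\Sigma_{D'}\cdot v$ on $\P_D$ and $\Sigma_D\cdot v=\Sigma_{D'}\cdot v$ on $\P_{D'}$.

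The genuine gap is your globalisation. Your flip argument, even if made rigorous, only compares $\Sigma_{D_i}$ with $\Sigma_{D_{i+1}}$ at a point of $\P_{D_i}$; chaining such inequalities at \emph{different} basepoints does not yield $\Sigma_D\cdot v>\Sigma_{D'}\cdot v$ at a fixed $v\in\P_D$. The paper bypasses all combinatorics of flips with a one-line linearity trick. Take the straight segment $v_t=(1-t)v+tv'$ from $v\in\P_D$ to any $v'\in\P_{D'}$, passing through cells $\P_{D_1},\dots,\P_{D_m}$. For each consecutive pair (one being a face of the other, by the structure established in Proposition~\ref{perturb}), the face relation gives $(\Sigma_{D_i}-\Sigma_{D_{i+1}})\cdot v_t\ge 0$ for $v_t\in\P_{D_i}$ and $\le 0$ for $v_t\in\P_{D_{i+1}}$. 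Since this difference is affine in $t$, the former inequality persists for all smaller $t$, in particular at $t=0$. Chaining gives $\Sigma_D\cdot v\ge\Sigma_{D'}\cdot v$; the first step is strict because $\P_D$ is $c$-dimensional. No analysis of $2$--$3$ moves, nor any claim that non-shared faces of $D''$ carry negative tilt (which is not obviously true in general), is needed.

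Your boundedness argument is also more elaborate than necessary. The paper argues directly on each $c$-dimensional cell: for a unit vector $v\in\overline{\P}_D$ one has $F_f\cdot v\ge 0$ for every row and not all vanish (else $v$ would lie in every face of $\P_D$), so $\Sigma_D\cdot v>0$; compactness of the unit sphere in $\overline{\P}_D$ then bounds $T\cap\overline{\P}_D$, and finiteness of the parameter cells finishes it. There is no need to extend $\phi$ to $\partial\R^c_{\ge 0}$ or to analyse limits as a cusp size degenerates.
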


\begin{proof}
We show that the closure of a $c$-dimensional parameter cell
$\overline{\P}_D$ has bounded intersection with $T$. Let $v$ be a unit
vector in $\overline{\P}_D$. Then since $v$ is not contained in every
face of $\P_D$, ${\Sigma}_D\cdot v > 0$.  The length of any multiple of
$v$ contained in $T$ is bounded by $1/({\Sigma}_D\cdot v)$.  Since this is
continuous in $v$, and the set of such $v$ is compact,
$\overline{\P}_D \cap T$ is bounded. Since $T$ is a union of finitely
many such sets it is bounded.

Let us write $H_D$ for the half-space $\{x\in\R^c\mid {\Sigma}_D\cdot x <
1\}$. Then $T$ is the intersection of all the $H_D$'s with
$\R^c_{>0}$. We will show that: {\em if $v$ belongs to a $c$-dimensional
parameter cell $\P_D$, and $\P_{D'}$ is any other
parameter cell, then the ray generated by $v$ leaves $H_D$ before it
leaves $H_{D'}$}. 

It will then follow that a ray in $\P_D$ penetrates the (non-empty) face
of $T$ generated by $H_D$. Since a ray not in $\P_D$ belongs to
the closure of some other parameter cell, it does not leave $H_D$
first and therefore does not pass through the same face of $T$. Since
cones on the lower dimensional faces of a $(c-1)$-dimensional face of
$T$ are the faces of a $c$-dimensional parameter cell, the result
then follows from Proposition~\ref{perturb}.

It remains to show that $H_D$ cuts off any ray in $\P_D$ closer to the
origin than
$H_{D'}$, for all parameter cells $\P_{D'}\neq \P_D$. 
Equivalently, for $v\in \P_D$, ${\Sigma}_D\cdot v > {\Sigma}_{D'}\cdot v$. 

Firstly, let $\P_D$ and $\P_{D'}$ be any two parameter cells such that
$\P_{D'}$ is a face of $\P_{D}$, and let $v$ belong to $\P_D$. The rows of
$F_{D'}$ are a proper subset of the rows of $F_D$, and since $F_f\cdot v>0$ for
each row, ${\Sigma}_D\cdot v > {\Sigma}_{D'}\cdot v$.
If instead $\P_{D}$ is a face of $\P_{D'}$, then the rows of
$F_{D'}$ omitted from $F_D$ are precisely those for which $F_f\cdot v$
vanishes. Therefore in that case ${\Sigma}_D\cdot v = {\Sigma}_{D'}\cdot v$.

Next, let $\P_D$ be a $c$-dimensional parameter cell, and let $\P_{D'}$
be arbitrary, with $D'\neq D$. Choose $v\in \P_D$ and $v'\in \P_{D'}$.
Let $\P_{D_1},\ldots,\P_{D_m}$ be the parameter cells through which
the straight line $v_t:= (1-t)v+tv'$ passes for $0\leq t \leq 1$ (so
that $D_1=D$ and $D_m=D'$).  For $v_t$ in $\P_{D_i}$,
${\Sigma}_{D_{i}}\cdot v_t \geq {\Sigma}_{D_{i+1}}\cdot v_t$ while for $v_t\in
\P_{D_{i+1}}$, ${\Sigma}_{D_{i}}\cdot v_t \leq {\Sigma}_{D_{i+1}}\cdot v_t$.
Since the difference between these terms is (affine) linear in $t$,
the former inequality must hold for all lesser values of $t$, in
particular, when $v_t=v$. Note also that the first such inequality is
strict, namely, ${\Sigma}_{D_1}\cdot v > {\Sigma}_{D_2}\cdot v$.  It follows
that ${\Sigma}_{D}\cdot v > {\Sigma}_{D'}\cdot v$ for arbitrary $D'$.
\end{proof}

Let $T_0$ be a polytope resulting from the intersection of $\R^c_{>0}$
with some of the half-spaces $H_D$ defined in the above proof.  If
$T_0\supsetneq T$, some face $A=\overline{T}_0\cap \partial H_D$ of
$T_0$ will contain a point $v\in \R^c_{>0}$ not in $\overline{T}$,
and thus not in $\overline{T}\cap \partial H_D$, nor in the cone on this, 
$\overline{\P}_D$. Therefore $L_D v\neq 0$ or $F_f v < 0$ for some row $F_f$ of
$F_D$. This gives a test for when $T_0$ properly contains $T$; when
satisfied, it yields a new half-space $H_{D(v)}$ whose intersection
with $T_0$ is strictly smaller. After a finite number of intersections
we arrive at $T_0=T$. See Figure~\ref{construction}.

\begin{figure}[htbp]
\epsfig{file=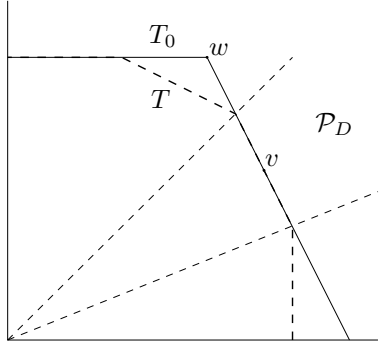, height=1.8in}
\begin{picture}(0,0)(208,0)
\put(141,95){$v$}
\put(168,114){$\P_D$}
\put(111,155){$w$}
\put(80,126){$T$}
\put(79,161){$T_0$}
\end{picture}
\caption{$T_0$ is a partially computed tilt polytope. The face
of $T_0$ containing $v\in\P_D$ has a vertex $w$ not in
$\overline{\P}_D$. Therefore $D(w)$ gives another face of $T$.} 
\label{construction}
\end{figure}

The computational geometry involved in the above is relatively
straightforward.  By using homogeneous coordinates we can treat an
unbounded region, such as $\R^c_{>0}$, as a polytope with some
vertices ``at infinity''.  The face $A$ of $T_0$, as defined above, is
the convex hull of those vertices $v$ of $T_0$ satisfying ${\Sigma}_D \cdot v = 1$.
If any of these satisfy $L_D v\neq 0$ or $F_f v < 0$ 
for some row $F_f$ of $F_D$
we conclude that $T_0\neq T$. If such $v$ lies in $\partial \R^c_{>0}$ we
perturb it a little to bring it inside $\R^c_{>0}$ before determining
a new half-space $H_{D(v)}$.

\subsection{Example: The Borromean rings complement}\label{b_rings_ex}
Let $M$ be the complement of the Borromean rings in $S^3$: \\
$$\epsfig{file=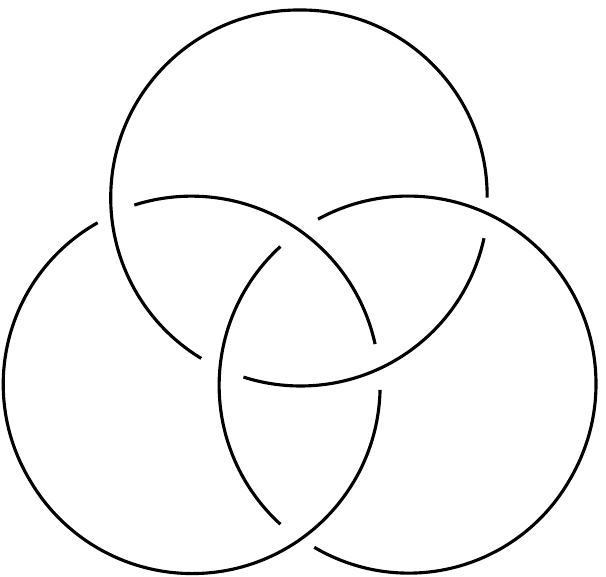, height=1in}$$
Then $M$ 
is an arithmetic hyperbolic $3$-manifold with $3$ cusps. It may be
realized by gluing two regular ideal hyperbolic octahedra in the following
pattern (see \cite{Th}).
$$
\epsfig{file=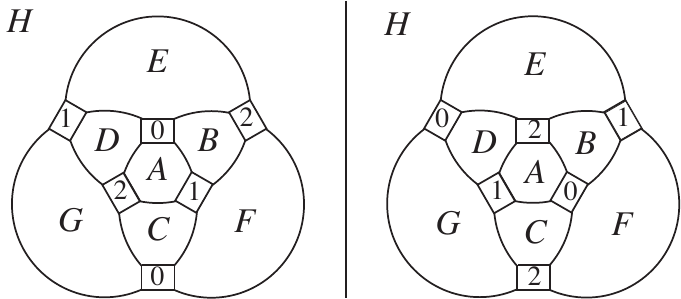, height=86.5pt}
$$
Letters indicate face identifications; cusps are numbered
$0,1,2$. Equal area cusp cross sections give rise to symmetrically
placed ideal vertex cross sections in the two octahedra. It follows
that the corresponding ideal cell decomposition consists of precisely
these two octahedra. We will first compute its parameter cell. 

Let us position the tiling of $\H^3$ by ideal octahedra such that one
of them, call it $C$, has a lift with vertices at $\{(\pm 1,0,0),
(0,\pm 1,0), (0,0,\pm 1)\}$ in the projective ball model. 
When the cusp cross sections all have equal area and contain
the centre of the octahedron, their Minkowski normals are
$\n_0\!=\!(1,0,0,1),\ \n_1\!=\!(0,1,0,1),\ \n_2\!=\!(0,0,1,1),\ 
\n_3\!=\!(-1,0,0,1),\ \n_4\!=\!(0,-1,0,1),\ \n_5\!=\!(0,0,-1,1).$
A neighbouring lift of the other octahedron,
call it $C'$, has vertex representatives $\{\n_0,\n_1,\n_2,\n_3',
\n_4',\n_5'\}$ where $\n_i'$ is the Minkowski metric reflection of
$\n_i$ in the hyperplane spanned by $\{\n_0,\n_1,\n_2\}$. 
Then  $\n_3'\!=\!(1,2,2,3),\ \n_4'\!=\!(2,1,2,3),\ \n_5'\!=\!(2,2,1,3)$. 
(See the left hand side of Figure  \ref{bor0}.)

\begin{figure}[h]
\begin{center}
\epsfig{file=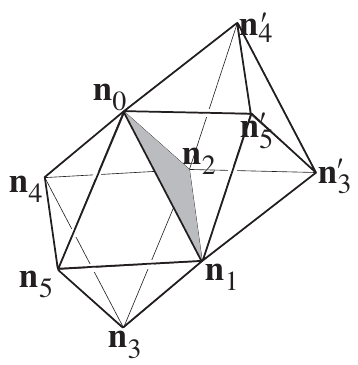} 
\hspace{.7in}
\epsfig{file=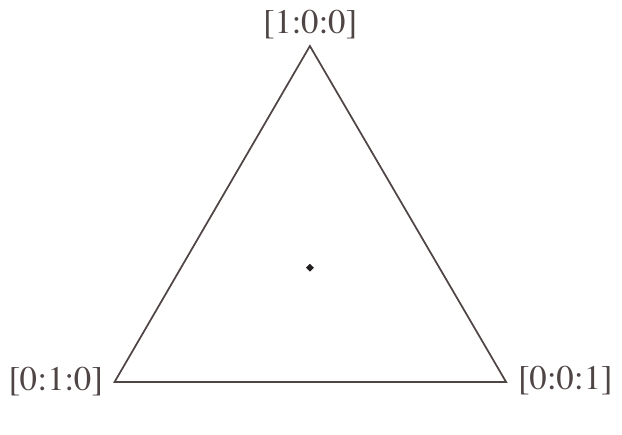}
\end{center}
\caption{Initial cell decomposition of $M$ into ideal octahedra and the corresponding 
projective parameter cell.}
\label{bor0}
\end{figure}

For size vector $v=(v_0,v_1,v_2)^t$ the resulting ideal vertex
representatives become $\n_i/v_i$, $\n_{i+3}/v_i$ and $\n_{i+3}'/v_i$ for
$i=0,1,2$. The hyperplane containing $\n_0/v_0$, $\n_1/v_1$,
$\n_2/v_2$ and $\n_3/v_0$ is $\{\x\mid \x \cdot N_v = 1\}$ where 
$N_v$ is calculated as in the proof of Proposition \ref{conditions_for_D(v)} giving
$$
N_v = \frac{1}{2}\left(
\begin{array}{rrrr}
 1 & 0 & 0 &-1 \\
-1 & 2 & 0 &-1 \\
-1 & 0 & 2 &-1 \\
 1 & 0 & 0 & 1 
\end{array}\right)
\left(\begin{array}{c} v_0 \\ v_1 \\ v_2 \\ v_0 \end{array}\right)
=
\left(
\begin{array}{rrr}
 0 & 0 & 0 \\
-1 & 1 & 0 \\
-1 & 0 & 1 \\
 1 & 0 & 0 
\end{array}\right) v.
$$
Then $\n_4/v_1$ and $\n_5/v_2$ lie in this hyperplane if and only if 
$\n_4 \cdot N_v - v_1 = 0 = (2,-2,0) v$ and $\n_5  \cdot N_v - v_2 = 0 = (2,0,-2) v$. 
Hence $L_C = 
\left(\begin{array}{rrr}
2 & -2 &  0 \\
2 &  0 & -2 
\end{array}\right)$. By symmetry, $L_{C'}$ is the same. 
Next we compute the tilt of $f_0$, the face between $C$ and
$C'$. This will be positive if $\n_3'/v_0$ lies above the hyperplane
defined by $N_v$, i.e. if $\n_3'  \cdot N_v - v_0 = (-2,2,2) v > 0$. Thus
$F_{f_0} = (-2,2,2)$, and the tilt is positive at  $v = (1,1,1)^t$. 
By symmetry, the other faces of the octahedra also have positive
tilt. The parameter cell of this decomposition $\{ v \mid v_0 = v_1 = v_2 \}$ 
is, projectively, a point (as shown in the right half of Figure \ref{bor0}).

Suppose next we increase $v_0$ slightly. The faces of $C$ still have
positive tilt. We claim that $C$ is subdivided into 4 tetrahedra containing the edge
$\n_0,\n_3$ as shown in the left of Figure \ref{bor1} below.

\begin{figure}[h]
\begin{center}
\epsfig{file=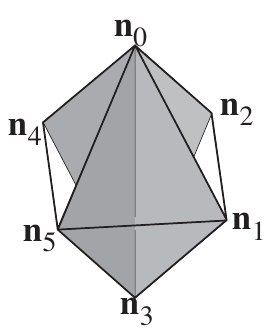} 
\hspace{.7in}
\epsfig{file=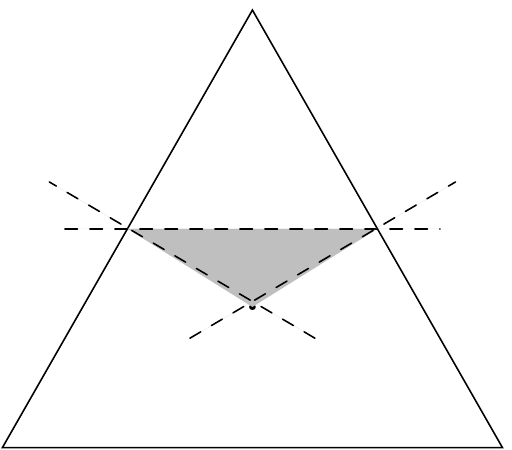, height=1.35in}
\end{center}
\caption{Decomposition into ideal tetrahedra and corresponding parameter cell.}
\label{bor1}
\end{figure}

Let $f_1$ be the triangle with vertices $\{\n_0,\n_2,\n_3\}$. Then
$f_1$ has positive tilt if and only if $\n_4 \cdot N_v - v_1 = (2,-2,0) v =
F_{f_1} v > 0$. Letting $f_2$ be the triangle with vertices
$\{\n_0,\n_1,\n_3\}$, we obtain $F_{f_2} = (2,0,-2)$. By symmetry, the
tilts of the other two faces shown above are the same. Four more
triangles, having the same tilts, subdivide $C'$. Therefore $M$ is
subdivided into $8$ simplices in the parameter cell shown in 
the right of Figure \ref{bor1}.

Increasing $v_0$, eventually $F_{f_0} v = 0$, so that the
faces of $C$ and $C'$ vanish from the decomposition. The four
simplices around the edge $\n_1,\n_2$ become an octahedron. Further
increasing $v_0$ so $F_{f_0} v < 0$, we claim that this octahedron splits at the square
dual to the edge $\n_1,\n_2$ into two square-based pyramids with vertices
$\{ \n_0, \n_0', \n_3', \n_3, \n_1\}$ and $\{ \n_0, \n_0', \n_3', \n_3, \n_2\}$
as shown in the left of Figure \ref{bor2}. 

\begin{figure}[h]
\begin{center}
\epsfig{file=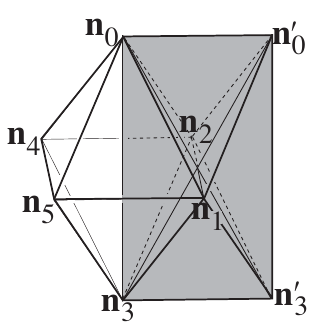} 
\hspace{.7in}
\epsfig{file=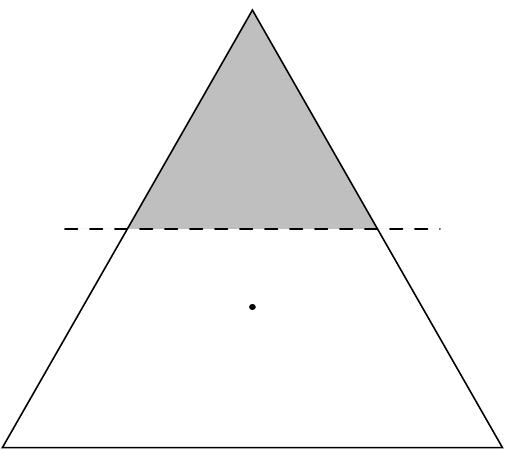, height=1.35in}
\end{center}
\caption{Decomposition into square-based pyramids and corresponding parameter cell.}
\label{bor2}
\end{figure}

Let $N_v'$ define the hyperplane containing
$\{\n_0/v_0, \n_1/v_1,\n_3/v_0, \n_3'/v_0\}$. Then 
$$
N_v' = \left(\begin{array}{rrr}
0 & 0 & 0 \\
-1& 1 & 0 \\
0 &-1 & 0 \\
1 & 0 & 0
\end{array}\right) v.
$$
Let $C_1$ be the
square-based pyramid with vertices $\{\n_0,\n_1,\n_3, \n_3',
\n_0'\}$, 
where $\n_0'\!=\!(-1,2,2,3)$ is the Minkowski reflection of $\n_3'$ in the plane
containing $\n_1,\n_2,\n_4,\n_5$. We find that $L_{C_1} =
(0,0,0)$. Let $f_3$ be the triangle with vertices $\{\n_0, \n_3,
\n_3'\}$. Then $F_{f_3} = (1,-1,-1)$ and $F_{f_3} v >0$.
Therefore $M$ is divided into $4$ square-based pyramids in
the parameter cell shown in the right of Figure \ref{bor2}, as mentioned in Remark \ref{non-generic-cell}. 

\begin{figure}[h]
\begin{center}
\epsfig{file=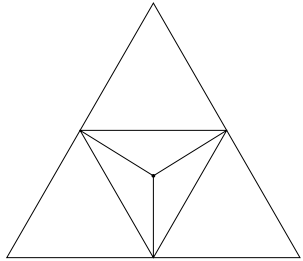, height=1.35in}
\end{center}
\caption{All parameter cells for canonical decompositions of the Borromean rings complement}
\label{bor3}
\end{figure}

By symmetry, the full set of parameter cells is projectively as shown in Figure \ref{bor3}. 
On the three line segments containing the centre point, each of the original octahedra  
is subdivided into two square based pyramids.

\section{Commensurability of cusps}\label{cuspcom}

The number of canonical cell decompositions that can arise for a
multi-cusped manifold can be quite large, placing practical limits on
the usefulness of our methods. We show next how it is often
possible to greatly reduce the complexity of computing the
commensurator of a manifold.

A horospherical cross section of a cusp in an orientable hyperbolic
$3$-manifold is a Euclidean torus, well defined up to
similarity. We can position and scale a fundamental parallelogram
in $\C$ such that one vertex lies at the origin and the two adjacent edges end
at $1$ and a point $z$ in the upper half-plane. Such a $z$ is called a
{\em cusp shape parameter.} Alternative choices of fundamental
parallelogram yield parameters differing by the action of $\mbox{\rm SL}_2\Z$
by M\"obius transformations. By choosing a suitable fundamental domain
for the action of $\mbox{\rm SL}_2\Z$ on the upper half-plane, we can
make a canonical choice of shape parameter for each cusp.

If one cusp covers another, there is an induced covering of Euclidean
tori. It follows that their cusp shape parameters are related by the action
of an element of $\mbox{\rm GL}_2\Q$. Let us call cusp shapes
{\em commensurable} if they are so related.
As we shall see shortly, it is not hard to
determine when two cusp shapes are commensurable.

Suppose we are trying to determine the (discrete) commensurator
quotient $Q$ of $M$. If $M$ has cusps of incommensurable shape, these
necessarily cover distinct cusps of $Q$. Therefore any assignment of
cusp neighbourhoods in $Q$ will yield a Ford spine and tiling whose
symmetry group is the whole commensurator (the group of covering
transformations of $Q$). It follows that we can start by choosing
arbitrary horospheres in each of a set of representatives for the
commensurability classes of cusps of $M$. Then as we vary our choices
of horosphere in the remaining cusps we will be sure to find a tiling
whose symmetry group is the commensurator of $M$. The easiest
case is when no two cusps of $M$ are commensurable. Then any choice of
horospheres at all will do.

The harder case is when $M$ has multiple commensurable cusps. If the
symmetry group of $M$ is non-trivial, it may act by non-trivially
permuting some of these cusps. Conceptually we should first divide $M$
by its symmetry group and then find the commensurator of this
quotient. For practical purposes it is hard to work with non-manifold
quotients. Instead, whenever two cusps are related by a symmetry of
$M$, we choose symmetrically equivalent cusp neighbourhoods. Let $c$ be
the number of orbits under the action of $\mbox{\rm Symm}(M)$ on the
cusps. Let $d$ be the number of distinct commensurability classes of
cusp shape. Then the parameter space of relative horosphere positions
we need to search, in order to find a tiling whose symmetry group
equals the commensurator, has dimension $c-d$.

Our algorithm is really only a slight modification of the algorithm of Weeks \cite{We1}
for finding the symmetries of a cusped hyperbolic manifold
$M=\H^3/\Gamma$. In order to find the symmetries we need only consider
the canonical cell decomposition arising from a choice of cusp neighbourhoods
such that all boundary tori have equal area. Then the symmetries of
$M$ are the symmetries of the lifted tiling that normalize $\Gamma$.
Equivalently, they are the symmetries of the tiling for which the
covering described in Remark~\ref{iscovering} has degree one.

Returning to the question of when two cusp shapes are commensurable,
we note first that cusp shapes of $M$ belong to the invariant trace
field of $M$. But if $k$ is any number field, and $\alpha, \alpha'$ are
irrational elements of $k$, they are related by an element of
$\mbox{\rm GL}_2\Q$ if and only if
\begin{equation}\label{cce}
(c\alpha + d)\alpha'=a\alpha + b
\end{equation}
is soluble for $a,b,c,d\in\Q$ such that 
\begin{equation} \label{detcond}
ad - bc\neq 0. 
\end{equation}
We can
replace (\ref{detcond}) with the condition that $a,b,c,d$ are not all zero, since
for $\alpha,\alpha'\notin\Q$, (\ref{detcond}) follows automatically
from (\ref{cce}) and the fact that $\{1,\alpha\}$ are linearly independent over $\Q$. 
Regarding $k$ as a finite dimensional vector space over $\Q$ we see that (\ref{cce})
has non trivial solutions if and only if $\{1,\alpha,
\alpha',\alpha\alpha'\}$ are linearly dependent over $\Q$.
In particular, if $[k:\Q]<4$ all irrationals are commensurable in this
sense. We thank Ian Agol for pointing out this condition.

\subsection{Example: a knot with cusp field not equal to invariant trace field}
\label{shape_not_trace}
An  interesting example uncovered during this work is the complement of the knot
$12n706$ shown below. This has one torus cusp with shape parameter
$z = 6i$ generating a cusp field $\Q(i)$ which is strictly contained in its invariant trace field $\Q(i,\sqrt{3})$. This answers a question of Neumann-Reid in \cite{NR}, who asked whether the figure eight knot and the two dodecahedral knots of Aitchison-Rubinstein \cite{AR} were the only such examples\footnote{Alan Reid informs
us that Nathan Dunfield has found another example: the 15 crossing knot
$15n{132539}$.}.
$$
\epsfig{file=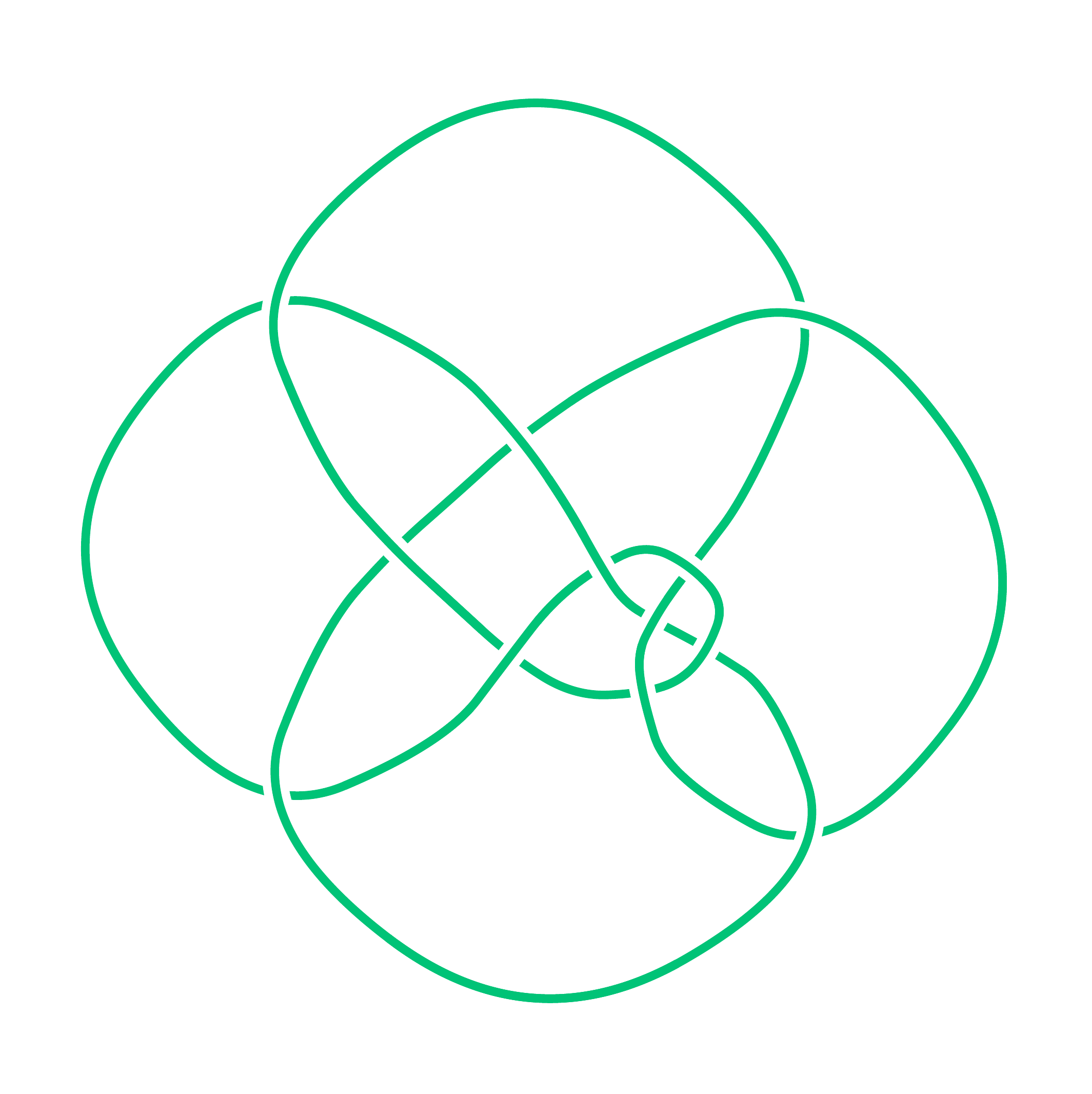, height=1.4in}
$$

\section{Experimental Results}

We have implemented the algorithms described here and used them
to compute the commensurability classes of all 4929 manifolds in
the Hildebrand-Weeks census of cusped hyperbolic $3$-manifolds
\cite{CHW} and all 7969 complements of the
8614 hyperbolic knots and links up to 12 crossings.
After replacing non-orientable manifolds (in the 5-census) by
their double covers and removing duplicates we obtained a total of
12783 orientable manifolds. Tables of all the results are available at
{\tt http://www.ms.unimelb.edu.au/\~{}snap/}.

In the process it was necessary to identify the arithmetic
manifolds and arrange them into their own separate commensurability
classes. Criteria for arithmeticity and for commensurability of arithmetic
manifolds are given in \cite{CGHN} and \cite{MacReid}. In fact, a cusped manifold is
arithmetic if and only if its invariant trace field is imaginary quadratic and
it has integer traces. Cusped arithmetic 
manifolds are commensurable if and only if they have the
same invariant trace field. They are therefore classified by the discriminant
$d$ of the invariant trace field $\Q(\sqrt{d})$. 

There were 142 arithmetic manifolds in six commensurability classes.
Table~\ref{artab} lists some of the arithmetic manifolds found with
discriminant $-3, -4$ or $-7$, and all with discriminant $-8, -11$ or
$-15$.

\begin{table}[htbp]
\begin{tabular}{ll}
$d$ & manifolds \\
\hline
-3 & \parbox{4in}{\rule{0pt}{2ex}m000,
  m002, m003, m004=4a1, m025, m203=6a5=11n318, s118, s961, 8a39,
  10a280, 10n130, 10n143, 10n155, 11n539, 12a3285, 
  \rule[-1ex]{0pt}{1ex}\ldots
} \\
\hline
-4 & \parbox{4in}{\rule{0pt}{2ex}m001,
  m124=8n10=10n139, m125, m126, m127, m128, m129, s859, v1858,
  8n8=9n34=10n112, 9n36, 10a242, 10n74,
  11n545, \rule[-1ex]{0pt}{1ex}\ldots
} \\
\hline
-7 & \parbox{4in}{\rule{0pt}{2ex}m009,
  m010, s772, s773, s774, s775, s776=6a8=8n7, 8a25, 8a37,
  10n73=12n968, 10n113, 
  11n498, 12n3068, 12n3078, 12n3093, 
  \rule[-1ex]{0pt}{1ex}\ldots
} \\
\hline
-8 & \parbox{4in}{\rule{0pt}{2ex}v2787,
  v2788,
  v2789,
  9a73,
  9a74,
 12a3292,
 12a3296,
 12n2625= 12n2630,
 12n2972,
 12n3088,
 \rule[-1ex]{0pt}{1ex}12n3098=12n3099.
} \\
\hline
-11 & \parbox{4in}{\rule[-1ex]{0pt}{3ex}12a2126,
 12a2961,
 12a3039,
 12a3230,
 12a3295.
} \\
\hline
-15 & \parbox{4in}{\rule[-1ex]{0pt}{3ex}12a3169,
 12a3273,
 12a3284,
 12a3300,
 12a3307,
 12a3308.
}
\end{tabular}
\caption{Selected arithmetic manifolds.}\label{artab}
\end{table}

The naming of manifolds in the tables is as follows. Manifolds whose names begin
with m, s or v belong to the 5, 6 or 7 tetrahedra census of cusped
manifolds respectively. The rest are knot and link complements in the
form {\it$<$number of crossings$>$ $<$alternating or non-alternating$>$ $<$index
  in table$>$.} The link tables were provided by Morwen Thistlethwaite and 
are  included with current versions of Snap \cite{Snap} and Tube \cite{Tube}.

The remaining 12641 manifolds were non-arithmetic, falling into 11709
commensurability classes. A few of them are shown in Table~\ref{nonar}.

\begin{table}[htbp]
\begin{tabular}{lrrllll}

manifold & s.g. & c.d. & c.vol,c.cl   & field  & nc,cnc & cusp density \\
\hline
10a297  &20 & 40 & 0.365076519 & 4,1025(2) & 5,1 & 0.642619992 \\
10a291  & 4 & 40 & 0.365076519 & 4,1025(2) & 4,1 & 0.642619992 \\
10a277  & 4 & 40 & 0.365076519 & 4,1025(2) & 3,1 & 0.642619992 \\
12n2492 & 4 & 40 & 0.365076519 & 4,1025(2) & 3,1 & 0.642619992 \\
12n2899 & 4 & 40 & 0.365076519 & 4,1025(2) & 4,1 & 0.642619992 \\
\hline
12n1189 & 2 & 2 & 7.175483613,0 & 7,-76154488(-2) & 2,2 & 0.589830477 \\
12n1190 & 2 & 2 & 7.175483613,1 & 7,-76154488(-2) & 2,2 & 0.589830477 \\
12n1481 & 2 & 2 & 7.175483613,2 & 7,-76154488(-2) & 2,2 & 0.589830477 \\
12n2348 & 2 & 2 & 7.175483613,3 & 7,-76154488(-2) & 3,3 & 0.644747497 \\
12n2580 & 2 & 2 & 7.175483613,4 & 7,-76154488(-2) & 3,3 & 0.631898787 \\
\hline
m045    & 4 & 4 & 0.818967911   & 3,-107(-2) & 1,1 & 0.608307263 \\
m046    & 4 & 4 & 0.818967911   & 3,-107(-2) & 1,1 & 0.608307263 \\
v3379   & 8 & 8 & 0.818967911   & 3,-107(-2) & 2,1 & 0.608307263 \\
v3383   & 4 & 8 & 0.818967911   & 3,-107(-2) & 3,1 & 0.608307263 \\
v3384   & 8 & 8 & 0.818967911   & 3,-107(-2) & 2,1 & 0.608307263 \\
12a1743 & 8 &16 & 0.818967911   & 3,-107(-2) & 2,1 & 0.608307263 \\
v3376   & 4 & 4 & 1.637935822,0 & 3,-107(-2) & 2,1 & 0.690189995 \\
v3377   & 4 & 4 & 1.637935822,0 & 3,-107(2)  & 1,1 & 0.690189995 \\
v3378   & 4 & 4 & 1.637935822,0 & 3,-107(2)  & 1,1 & 0.690189995 \\
12a2937 & 8 & 8 & 1.637935822,1 & 6,-1225043(1) & 3,2 & 0.608307263 \\
\hline
9a94 & 4 & 24 & 0.575553268 & 4,144(1) & 3,2 & 0.844133714
\end{tabular}
\caption{Selected non-arithmetic manifolds.} \label{nonar}
\end{table}

Column headings are as follows: s.g. is the order of the symmetry
group; c.d. is the degree of the manifold over its commensurator
quotient; c.vol is the volume of the commensurator quotient, optionally
followed by c.cl,
commensurability class numbered from 0, when incommensurable
manifolds are listed with the same commensurator volume. Thus manifolds are
commensurable if and only if they have the same entry in this column. 
The invariant trace field is described by its degree, discriminant and a number
specifying which root of the minimum polynomial generates it (with
sign corresponding to choice of complex conjugate); nc,cnc gives the
number of cusps in the manifold and the number of cusps in the
commensurator quotient; cusp density is computed using equal area cusps in the
commensurator quotient. 

The first group of manifolds have the smallest commensurator volume
found (among non-arithmetic manifolds) and are the link complements
which appeared in Section~\ref{5_links}. They all have `hidden
symmetries,' i.e. commensurabilities not arising from the symmetry group of
the manifold. The total number of non-arithmetic manifolds having
hidden symmetries was 148. The next group of manifolds shows that 
incommensurable
manifolds are not always distinguished by cusp density. The third
group of manifolds includes manifolds whose classes are distinguished
by invariant trace field but not by cusp density, and manifolds with the same
commensurator quotient volume but different invariant trace fields. 
One should not get the impression that cusp density is a poor invariant: 
in fact, among the 11278 cusp densities found, only 417 grouped together incommensurable manifolds. 
The final line gives data for the non-arithmetic manifold with highest cusp density found.
(The maximum possible cusp density is $0.853276\ldots$, which occurs for the figure eight knot complement.)

More details of the fields occurring above are listed in the Table \ref{fields}.

\begin{table}[htbp]
\begin{tabular}{crcl}

degree & discriminant & signature & minimum polynomial \\
\hline
4 & 1025 & 0, 2 & $x^4 - x^3 + 3x^2 - 2x + 4$\\
7 &  -76154488 & 1, 3 & $x^7 - 2x^6 + 3x^5 - 5x^4 + x^3 - 8x^2 - 2x - 4$\\
3 & -107 & 1, 1 & $x^3 - x^2 + 3x - 2$\\
6 & -1225043 & 0, 3 & $x^6 - 2x^5 - 2x^3 + 30x^2 - 52x + 29$\\
4 & 144 & 0, 2  & $x^4 - x^2 + 1$\\
\end{tabular}
\caption{Fields in previous table.} \label{fields}
\end{table}

\section{Appendix}

The indexing system used for knots and links, here and in {\tt snap},
may still be subject to change. This is due to the difficulty of
determining whether two non-hyperbolic links are equivalent and the
consequent possibility that duplicates will later be discovered and 
removed from the tables.

For this reason we provide Table~\ref{dowker_codes}, giving the
Dowker-Thistlethwaite codes of all the knots and links that we refer
to. Since, for links, this code is not well known, we
describe here how this works. 

Firstly there is the trivial matter of passing between the alphabetic
codes used by {\tt snap}, and their numerical forms. The Dowker
code for a link with $n$ crossings and $k$ components is a
permutation of the even integers $2,\ldots, 2n$, with possible sign
changes, bracketed into $k$ subsequences: e.g. $(6,-8)\
(-10,14,-12,-16,-2,4)$. To express this alphabetically we encode $n$
and $k$ as
the first two letters
using $a=1,b=2$, etc. Then follow $k$ letters
giving the lengths of the bracketed subsequences. Finally there are
$n$ letters giving the sequence of even integers using
$a=2,b=4,$ etc.\ and $A=-2,B=-4,$ etc. The alphabetic code for the above
example is thus {\tt hbbfcDEgFHAb}. 

To go from a link diagram to its Dowker code, proceed as follows.
Traverse each component, numbering the crossings, starting with $1$ on an
overcrossing. When the first component is done, continue with
consecutive numbers on the next component. Each crossing will receive
two numbers. We can number the crossings in such a way that every crossing gets one
even and one odd number. (This follows easily from the fact that we
can two-colour the plane containing a link diagram.) Negate any even
number which labels an over-crossing (for an alternating link there
will not be any).  For each odd number $1,3, \ldots, ,2n-1$ write down the
corresponding even number: this gives a sequence of $n$ even numbers.  A
component with $2j$ crossings will have $j$ odd numbers on it,
so there will be $j$ corresponding numbers for it in the code; bracket
together the numbers for each component. See for example
Figure~\ref{dcexample}. 

\begin{figure}[htbp]
$$
\epsfig{file=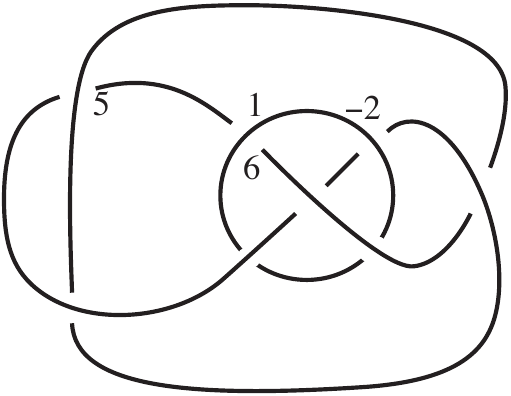,height=3cm} 
$$
\caption{Complete the numbering indicated on the above two-component
link to obtain the Dowker-Thistlethwaite code $(6,-8)\ (-10,14,-12,-16,-2,4)$.}
\label{dcexample}
\end{figure}

The reverse procedure, going from a code to a link diagram is a little
bit more tricky, but essentially the same as the procedure
for knots, described in \cite{Ad}. We draw the first component as a
knot, with extra as-yet-unconnected crossings on it. When adding
further components we will encounter crossings with other link
components. 

 \begin{table}[htbp]
\begin{tabular}{llll}
name & Dowker code & name & Dowker code \\
4a1     & dadbcda &
12a2937 & lcbeechjklaiefgbd \\
6a5     & fbccdefacb &
12a2961 & lcbfdcfahibekdlgj \\
6a8     & fcbbbceafbd &
12a3039 & lcbhbcfaikbjelgdh \\
8a25    & hbbfcfegahdb &
12a3169 & lccfcdfhcjakblieg \\
8a37    & hcbcccfghadeb &
12a3230 & ldbccdcfagiebkdlhj \\
8a39    & hdbbbbceagbhdf &
12a3273 & ldbddbceagbidkflhj \\
8n7     & hcbcccDFgHEaB &
12a3284 & ldbfbbceaibkjldgfh \\
8n8     & hcbcccdFgHEab &
12a3285 & ldbfbbcfaikbjlehdg \\
8n10    & hdbbbbcEaGBHDF &
12a3292 & ldccccdegjhkflacib \\
9a73    & ibcfdfhagbice &
12a3295 & ldccccdgjakhbflice \\
9a74    & ibcfdfhaibecg &
12a3296 & ldccccdgjhkbelcfia \\
9a94    & icbebcdhfiabeg &
12a3300 & lebbbdbceagbidkflhj \\
9n34    & icbdcceaGbHDIF &
12a3307 & lebccbbcfaikbljdheg \\
9n36    & icbebcdHfiabeG &
12a3308 & lfbbbbbbceagbidkflhj \\
10a242  & jbeefghjiaecbd &
12n706  & lalceFhGjIKlBaD \\  
10a277  & jcbfbceagbidjfh &
12n968  & lbbjcDFhIJLAbKEG \\
10a280  & jcccddeghjibcfa &
12n1189 & lbbjceaHbKiDjgLF \\
10a291  & jdbbdbceagbidjfh &
12n1190 & lbbjceaHbkIDJGlf \\
10a297  & jebbbbbceagbidjfh &
12n1481 & lbbjchaEGJDbkFli \\
10n73   & jbbhcEFihGJAdb &
12n1848 & lbcidfIaglceKBHJ \\
10n74   & jbbhcfaHGIbDJE &
12n2348 & lcbbhcEaHBKiDjgLF \\
10n112  & jcbcecdFgIHabJE &
12n2492 & lcbdfceaGbiDJKfLH \\
10n113  & jcbcecFaHJIBDGE &
12n2580 & lcbeecHaEGJDBkFli \\
10n130  & jcbfbceaGbIDJFH &
12n2625 & lcbfdcfaIJbKLEDHG \\
10n139  & jdbbcccEaHBijDgf &
12n2630 & lcbfdcfaIJbKLHDEG \\
10n143  & jdbbdbcEaGBiDjfh &
12n2899 & ldbbbfcEaGBIDjkFlh \\
10n155  & jebbbbbcEaGBIDjFh &
12n2972 & ldbccdcfaIJbKLEDHG \\
11n318  & kbchdEfcHiJKaBG &
12n3068 & ldbddbceaGbIDKFLHJ \\
11n498  & kcbcfcfaHJbIKEGD &
12n3078 & ldbfbbceaIbKJLDGFH \\
11n539  & kcbedceaHbIJDGKF &
12n3088 & ldccccdeGacJBklFih \\
11n545  & kcbfccdIfJabKGEH &
12n3093 & ldccccdEGHiJKlAFBc \\
12a1743 & lbbjchfjialkedbg &
12n3098 & ldccccdEGhIJKLaFBC \\
12a2126 & lbcidgjclhafkbie & 
12n3099 & ldccccdEGhIJKlAFBc 
\end{tabular}
\caption{Dowker-Thistlethwaite codes of all knots and links mentioned
in this paper.}\label{dowker_codes}
\end{table}

\newpage

\end{document}